\newcommand\C{\mathbb{C}}
\newcommand\Z{\mathbb{Z}}
\newcommand\Q{\mathbb{Q}}
\newcommand\N{\mathbb{N}}
\newcommand\F{\mathbb{F}}
\newcommand\kk{\Bbbk}
\newcommand\id{\mathrm{Id}}
\newcommand\fh{\mathfrak{h}}
\newcommand\fq{\mathfrak{q}}
\newcommand\cF{\mathcal{F}}
\newcommand\cG{\mathcal{G}}
\newcommand\cK{\mathcal{K}}
\newcommand\partition{\mathcal{P}}
\newcommand\comp{\mathcal{C}}
\newcommand\bh{\mathbf{h}}
\newcommand\br{\mathbf{r}}
\newcommand\Sy{\mathrm{Sym}}
\newcommand\QS{\mathrm{QSym}}
\newcommand\NS{\mathrm{NSym}}
\newcommand\op{\mathrm{op}}
\newcommand\pj{\mathrm{proj}}
\newcommand\Vect{\mathrm{Vect}}
\newcommand\Rh{\prescript{R}{}{\bh}}
\newcommand\pR[1]{\prescript{R}{}{#1}}
\newcommand\pL[1]{\prescript{L}{}{#1}}
\newcommand\ts{\textstyle}
\DeclareMathOperator{\Hom}{Hom}
\DeclareMathOperator{\End}{End}
\DeclareMathOperator{\Span}{Span}
\DeclareMathOperator{\Res}{Res}
\DeclareMathOperator{\Ind}{Ind}
\DeclareMathOperator{\md}{-mod}
\DeclareMathOperator{\pmd}{-pmod}
\DeclareMathOperator{\rad}{rad}
\newtheorem{theo}{Theorem}[section]
\newtheorem{prop}[theo]{Proposition}
\newtheorem{lem}[theo]{Lemma}
\newtheorem{cor}[theo]{Corollary}
\theoremstyle{definition}
\newtheorem{defin}[theo]{Definition}
\newtheorem{rem}[theo]{Remark}
\numberwithin{equation}{section}
  \newcommand{\comments}[1]{
    \begin{center}
      \parbox{6.5 in}{
        \color{red}
          {\footnotesize \textbf{Comments:} #1}
        \color{black}}
    \end{center}}
  \newcommand{\comments}[1]{}
  \newcommand{\details}[1]{
    \begin{center}
      \parbox{6 in}{
        \color{blue}
          {\footnotesize \textbf{Details:} #1}
        \color{black}}
    \end{center}}
  \newcommand{\details}[1]{}
  \newcommand{\prelim}{\textsc{Preliminary version: Do not distribute} \bigskip}
  \newcommand{\prelim}{}
\begin{document}
%

\title{Categorification and Heisenberg doubles arising from towers of algebras}

\author{Alistair Savage}
\address{A.~Savage: Department of Mathematics and Statistics, University of Ottawa, Canada}
\urladdr{\url{http://mysite.science.uottawa.ca/asavag2/}}
\email{alistair.savage@uottawa.ca}
\thanks{The first author was supported by a Discovery Grant from the Natural Sciences and Engineering Research Council of Canada.  The second author was supported by an AMS--Simons Travel Grant.  Part of this work was completed while the second author was at the University of Toronto.}

\author{Oded Yacobi}
\address{O.~Yacobi: School of Mathematics and Statistics, University of Sydney, Australia}
\urladdr{\url{http://www.maths.usyd.edu.au/u/oyacobi/}}
\email{oyacobi@maths.usyd.edu.au}

\begin{abstract}
  The Grothendieck groups of the categories of finitely generated modules and finitely generated projective modules over a tower of algebras can be endowed with (co)algebra structures that, in many cases of interest, give rise to a dual pair of Hopf algebras.  Moreover, given a dual pair of Hopf algebras, one can construct an algebra called the Heisenberg double, which is a generalization of the classical Heisenberg algebra.  The aim of this paper is to study Heisenberg doubles arising from towers of algebras in this manner.  First, we develop the basic representation theory of such Heisenberg doubles and show that if induction and restriction satisfy Mackey-like isomorphisms then the Fock space representation of the Heisenberg double has a natural categorification.  This unifies the existing categorifications of the polynomial representation of the Weyl algebra and the Fock space representation of the Heisenberg algebra.  Second, we develop in detail the theory applied to the tower of $0$-Hecke algebras, obtaining new Heisenberg-like algebras that we call \emph{quasi-Heisenberg algebras}.  As an application of a generalized Stone--von Neumann Theorem, we give a new proof of the fact that the ring of quasisymmetric functions is free over the ring of symmetric functions.
\end{abstract}

\subjclass[2010]{16D90, 16G10, 16T05}
\keywords{Heisenberg double, tower of algebras, categorification, Hopf algebra, Hecke algebra, quasisymmetric function, noncommutative symmetric function, Heisenberg algebra, Fock space}

\prelim

\maketitle
\thispagestyle{empty}

\tableofcontents

%
\section{Introduction}
%

The interplay between symmetric groups and the Heisenberg algebra has a rich history, with implications in combinatorics, representation theory, and mathematical physics.  A foundational result in this theory is due to Geissinger, who gave a representation theoretic realization of the bialgebra of symmetric functions $\Sy$ by considering the Grothendieck groups of representations of all symmetric groups over a field $\kk$ of characteristic zero (see~\cite{Gei77}).  In particular, he constructed an isomorphism of bialgebras
\[ \ts
  \Sy \cong \bigoplus_{n=0}^\infty \cK_0(\kk[S_n]\md),
\]
where $\cK_0(\mathcal{C})$ denotes the Grothendieck group of an abelian category $\mathcal{C}$.  Multiplication is described by the induction functor
\[
  [\Ind]: \cK_0(\kk[S_n]\md) \otimes \cK_0(\kk[S_m]\md)
  \to \cK_0(\kk[S_{n+m}]\md),
\]
while comultiplication is given by restriction.  Mackey theory for induction and restriction in  symmetric groups implies that the coproduct is an algebra homomorphism.  For each $S_n$-module $V$, multiplication by the class $[V] \in \cK_0(\kk[S_n]\md)$ defines an endomorphism of $\bigoplus_{n=0}^\infty \cK_0(\kk[S_n]\md)$.  These endomorphisms, together with their adjoints, define a representation of the Heisenberg algebra on $\bigoplus_{n=0}^\infty \cK_0(\kk[S_n]\md)$.

Geissinger's construction was $q$-deformed by Zelevinsky in \cite{Zel81}, who replaced the group algebra of the symmetric group $\kk[S_n]$ by the Hecke algebra $H_n(q)$ at generic $q$.  Again, endomorphisms of the Grothendieck group given by multiplication by classes $[V]$, together with their adjoints, generate a representation of the Heisenberg algebra.

The above results can be enhanced to a categorification of the Heisenberg algebra and its Fock space representation via categories of modules over symmetric groups and Hecke algebras.  A strengthened version of this categorification, which includes information about the natural transformations involved, was given in~\cite{Kho10} for the case of symmetric groups and in~\cite{LS13} for the case of Hecke algebras.

The group algebras of symmetric groups and Hecke algebras are both examples of \emph{towers of algebras}.  A tower of algebras is a graded algebra $A=\bigoplus_{n \geq 0}A_n$, where each $A_n$ is itself an algebra (with a different multiplication) and such that the multiplication in $A$ induces homomorphisms $A_m\otimes A_n \to A_{m+n}$ of algebras (see Definition~\ref{def:tower}).  In addition to those mentioned above, examples include nilcoxeter algebras, 0-Hecke algebras, Hecke algebras at roots of unity, wreath products (semidirect products of symmetric groups and finite groups, see \cite{FJW00,CL12}), group algebras of finite general linear groups, and cyclotomic Khovanov--Lauda--Rouquier algebras (quiver Hecke algebras).  To a tower of algebras, one can associate the $\Z$-modules $\cG(A)=\bigoplus_n \cK_0(A_n\md)$ and $\cK(A)=\bigoplus_n \cK_0(A_n\pmd)$, where $A_n\md$ (respectively $A_n\pmd$) is the category of finitely generated (respectively finitely generated projective) $A_n$-modules.  In many cases, induction and restriction endow $\cK(A)$ and $\cG(A)$ with the structure of dual Hopf algebras.  For example, in \cite{BL09} Bergeron and Li introduced a set of axioms for a tower of algebras that ensure this duality (although the axioms we consider in the current paper are different).

One of the main goals of the current paper is to generalize the above categorifications of the Fock space representation of the Heisenberg algebra to more general towers of algebras.  We see that, in the general situation, the Heisenberg algebra $\fh$ is replaced by the \emph{Heisenberg double} (see Definition~\ref{def:h}) of $\cG(A)$.  The Heisenberg double of a Hopf algebra is different from, but closely related to, the more well known Drinfeld quantum double.  As a $\kk$-module, the Heisenberg double $\fh(H^+,H^-)$ of a Hopf algebra $H^+$ (over $\kk$) with dual $H^-$ is isomorphic to $H^+ \otimes_\kk H^-$, and the factors $H^-$ and $H^+$ are subalgebras.  The most well known example of this construction is when $H^-$ and $H^+$ are both the Hopf algebra of symmetric functions, which is self-dual.  In this case the Heisenberg double is the classical Heisenberg algebra.   In general, there is a natural action of $\fh(H^+,H^-)$ on its Fock space $H^+$ generalizing the usual Fock space representation of the Heisenberg algebra.   Our first result (Theorem~\ref{theo:Fock-space-properties}) is a generalization of the well known Stone--von Neumann Theorem to this Heisenberg double setting.

In the special case of dual Hopf algebras arising from a tower of algebras $A$, we denote the Heisenberg double by $\fh(A)$ and the resulting Fock space by $\cF(A)$.  In this situation, there is a natural subalgebra of $\fh(A)$.  In particular, the image $\cG_\pj(A)$ of the natural Cartan map $\cK(A) \to \cG(A)$ is a Hopf subalgebra of $\cG(A)$, and we consider also the \emph{projective} Heisenberg double ${\mathfrak{h}_\pj}(A)$ which is, by definition, the subalgebra of $\fh(A)$ generated by $\cG_\pj(A)$ and $\cK(A)$.  Then ${\mathfrak{h}_\pj}(A)$ acts on its Fock space $\cG_\pj(A)$, and a Stone--von Neumann type theorem also holds for this action (see Proposition~\ref{prop:p-Fock-space-properties}).

We then focus our attention on towers of algebras that satisfy natural compatibility conditions between induction and restriction analogous to the well known Mackey theory for finite groups.  We call these towers of algebras \emph{strong} (see Definition~\ref{def:strong}) and give a necessary and sufficient condition for them to give rise to dual pairs of Hopf algebras (i.e.\ be \emph{dualizing}).   Our central theorem (Theorem~\ref{theo:categorification}) is that, for such towers, the Fock spaces representations of the algebras $\fh(A)$ and ${\mathfrak{h}_\pj}(A)$ admit categorifications coming from induction and restrictions functors on $\bigoplus_n A_n\md$ and $\bigoplus_n A_n\pmd$ respectively.

To illustrate our main result, we apply it to several towers of algebras that are quotients of group algebras of braid groups by quadratic relations (see Definition~\ref{def:Hecke-like}).  We first show that all towers of this form are strong and dualizing.  Examples include the towers of nilcoxeter algebras, Hecke algebras, and 0-Hecke algebras.  Starting with the tower of nilcoxeter algebras, we recover Khovanov's categorification of the polynomial representation of the Weyl algebra (see Section~\ref{sec:Weyl}).  Taking instead the tower of Hecke algebras at a generic parameter, we recover (weakened versions of) the categorifications of the Fock space representation of the Heisenberg algebra described by Khovanov and Licata--Savage (see Section~\ref{sec:sym}).  Considering the tower of Hecke algebras at a root of unity, we obtain a different categorification of the Fock space representation of the Heisenberg algebra (Proposition~\ref{prop:Heis-unity}) which, in the setting of the existing categorification of the basic representation of affine $\mathfrak{sl}_n$ using this tower, corresponds to the principal Heisenberg subalgebra.  In this way, we see that Theorem~\ref{theo:categorification} provides a uniform treatment and generalization of these categorification results.  A major feature of our categorification is that it does not depend on any presentation of the algebras in question, in contrast to many categorification results in the literature.

We explore the example of the tower $A$ of $0$-Hecke algebras in some detail.  In this case, it is known that $\cK(A)$ and $\cG(A)$ are the Hopf algebras of noncommutative symmetric functions and quasisymmetric functions respectively.  However, the algebras $\fh(A)$ and ${\mathfrak{h}_\pj}(A)$, which we call the \emph{quasi-Heisenberg algebra} and \emph{projective quasi-Heisenberg algebra}, do not seem to have been studied in the literature.  We give presentations of these algebras by generators and relations (see Section~\ref{subsec:quasi-Heisenberg}).  The algebra ${\mathfrak{h}_\pj}(A)$ turns out to be particularly simple as it is a ``de-abelianization'' of the usual Heisenberg algebra (see Proposition~\ref{prop:p-presentation}).  As an application of the generalized Stone--von Neumann Theorem in this case, we give a representation theoretic proof of the fact that the ring of quasisymmetric functions is free over the ring of symmetric functions (Proposition~\ref{prop:qsym-free-over-sym}).  Our proof is quite different than previous proofs appearing in the literature.

There are many more examples of towers of algebras for which we do not work out the detailed implications of our main theorem.  Furthermore, we expect that the results of this paper could be generalized to apply to towers of superalgebras.  Examples of such towers include Sergeev algebras and 0-Hecke-Clifford algebras.  We leave such generalizations for future work.

\subsection*{Notation}

We let $\N$ and $\N_+$ denote the set of nonnegative and positive integers respectively.  We let $\kk$ be a commutative ring (with unit) and $\F$ be a field.  For $n \in \N$, we let $\partition(n)$ denote the set of all partitions of $n$, with the convention that $\partition(0) = \{\varnothing\}$, and let $\partition = \bigcup_{n \in \N} \partition(n)$.  Similarly, we let $\comp(n)$ denote the set of all compositions of $n$ and let $\comp = \bigcup_{n \in \N} \comp(n)$.  For a composition or partition $\alpha$, we let $\ell(\alpha)$ denote the length of $\alpha$ (i.e.\ the number of nonzero parts) and let $|\alpha|$ denote its size (i.e.\ the sum of its parts).  By a slight abuse of terminology, we will use the terms \emph{module} and \emph{representation} interchangeably.

\subsection*{Acknowledgements}

The authors would like to thank N.\ Bergeron, J.\ Bernstein, M.~Khovanov, A.~Lauda, A.\ Licata, C.~Reutenauer, J.-Y. Thibon, and M.\ Zabrocki for useful conversations.

%
\section{The Heisenberg double} \label{sec:general-construction}
%

In this section, we review the definition of the Heisenberg double of a Hopf algebra and state some important facts about its natural Fock space representation.  In particular, we prove a generalization of the well known Stone--von Neumann Theorem (Theorem~\ref{theo:Fock-space-properties}).

We fix a commutative ring $\kk$ and all algebras, coalgebras, bialgebras and Hopf algebras will be over $\kk$.  We will denote the multiplication, comultiplication, unit, counit and antipode of a Hopf algebra by $\nabla$, $\Delta$, $\eta$, $\varepsilon$ and $S$ respectively.  We will use Sweedler notation
\[ \ts
  \Delta(a) = \sum_{(a)} a_{(1)} \otimes a_{(2)}
\]
for coproducts.  For a $\kk$-module $V$, we will simply write $\End V$ for $\End_\kk V$.  All tensor products are over $\kk$ unless otherwise indicated.

\subsection{Dual Hopf algebras} \label{subsec:dual-hopf}

We begin by recalling the notion of dual (graded connected) Hopf algebras.

\begin{defin}[Graded connected Hopf algebra]
  We say that a bialgebra $H$ is \emph{graded} if $H = \bigoplus_{n \in \N} H_n$, where each $H_n$ is finitely generated and free as a $\kk$-module, and the following conditions are satisfied:
  \begin{gather*} \ts
    \nabla(H_k \otimes H_\ell) \subseteq H_{k + \ell},\quad \Delta(H_k) \subseteq \bigoplus_{j=0}^k H_j \otimes H_{k-j},\quad k,\ell \in \N, \\
    \eta(\kk) \subseteq H_0,\quad \varepsilon(H_k) = 0 \text{ for } k \in \N_+.
  \end{gather*}
  We say that $H$ is \emph{graded connected} if it is graded and $H_0 = \kk 1_H$.  Recall that a graded connected bialgebra is a Hopf algebra with invertible antipode (see, for example, \cite[p.~389, Cor.~5]{Hand08}) and thus we will also call such an object a \emph{graded connected Hopf algebra}.
\end{defin}

If $H = \bigoplus_{n \in \N} H_n$ is a graded bialgebra, then its \emph{graded dual} $\bigoplus_{n \in \N} H_n^*$ is also a graded bialgebra.

\begin{rem}
  In general, one need not assume that the $H_n$ are free as $\kk$-modules.  Instead, one needs only assume that
  \begin{equation} \label{eq:dual-condition}
    H_k^* \otimes H_\ell^* \cong (H_k \otimes H_\ell)^* \text{ for all } k,\ell \in \N
  \end{equation}
  in order for the graded dual to be a graded bialgebra.  However, since our interest lies mainly in dual Hopf algebras arising from towers of algebras, for which the $H_n$ are free as $\kk$-modules, we will make this assumption from the start (in which case \eqref{eq:dual-condition} is automatically satisfied).
\end{rem}

\begin{defin}[Hopf pairing]
  If $H$ and $H'$ are Hopf algebras over $\kk$, then a \emph{Hopf pairing} is a bilinear map $\langle \cdot,
  \cdot \rangle \colon H \times H' \to \kk$ such that
  \begin{gather*} \ts
    \langle ab, x \rangle = \langle a \otimes b, \Delta(x) \rangle = \sum_{(x)} \langle a, x_{(1)} \rangle \langle b, x_{(2)} \rangle, \\ \ts
    \langle a, xy \rangle = \langle \Delta(a), x \otimes y \rangle = \sum_{(a)} \langle a_{(1)}, x \rangle \langle a_{(2)}, y \rangle, \\
    \langle 1_H, x \rangle = \varepsilon(x),\quad \langle a, 1_{H'} \rangle = \varepsilon(a),
  \end{gather*}
  for all $a,b \in H$, $x,y \in H'$.  Note that such a Hopf pairing automatically satisfies $\langle a, S(x) \rangle = \langle S(a), x \rangle$ for all $a \in H$ and $x \in H'$.
\end{defin}

Recall that, for $\kk$-modules $V$ and $W$, a bilinear form $\langle \cdot, \cdot \rangle \colon V \times W \to \kk$ is called a \emph{perfect pairing} if the induced map $\Phi \colon V \to W^*$ given by $\Phi(v)(w) = \langle v, w \rangle$ is an isomorphism.

\begin{defin}[Dual pair] \label{def:dual-pair}
  We say that $(H^+, H^-)$ is a \emph{dual pair} of Hopf algebras if $H^+$ and $H^-$ are both graded connected Hopf algebras and $H^\pm$ is graded dual to $H^\mp$ (as a Hopf algebra) via a perfect Hopf pairing $\langle \cdot, \cdot \rangle \colon H^- \times H^+ \to \kk$.
\end{defin}

\subsection{The Heisenberg double}
\label{subsec:h-definition}

For the remainder of this section, we fix a dual pair $(H^+,H^-)$ of Hopf algebras.  Then any $a \in H^+$ defines an element $\pL{a} \in \End H^+$ by left multiplication.  Similarly, any $x \in H^-$ defines an element $\pR{x} \in \End H^- $ by right multiplication, whose adjoint $\pR{x}^*$ is an element of $\End H^+$.  (In the case that $H^+$ or $H^-$ is commutative, we often omit the superscript $L$ or $R$.)  In this way we have $\kk$-algebra homomorphisms
\begin{gather}
  H^+ \hookrightarrow \End H^+,\quad a \mapsto \pL{a}, \label{eq:H+action} \\
  H^- \hookrightarrow \End H^+,\quad x \mapsto \pR{x}^*. \label{eq:H-action}
\end{gather}
The action of $H^-$ on $H^+$ given by~\eqref{eq:H-action} is called the \emph{left-regular action}.

\begin{lem} \label{lem:left-reg-action}
  The left-regular action of $H^-$ on $H^+$ is given by
  \[ \ts
    \pR{x}^*(a) = \sum_{(a)} \langle x, a_{(2)} \rangle a_{(1)} \quad \text{for all } x \in H^-,\ a \in H^+.
  \]
\end{lem}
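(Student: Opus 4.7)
The plan is to verify the formula by using the defining adjoint relation together with one of the Hopf pairing axioms, and then invoke nondegeneracy (perfectness of the pairing) to conclude.

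First, I would recall that by definition, $\pR{x}^* \in \End H^+$ is the adjoint of the right-multiplication operator $\pR{x} \colon H^- \to H^-$, $y \mapsto yx$, with respect to the Hopf pairing. That is, $\pR{x}^*(a)$ is characterized (uniquely, since the pairing is perfect) by the identity
\[
  \langle y, \pR{x}^*(a) \rangle = \langle yx, a \rangle \quad \text{for all } y \in H^-.
\]
So the goal reduces to checking that $\sum_{(a)} \langle x, a_{(2)} \rangle a_{(1)}$ satisfies this characterizing equation.

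Next, I would compute the right-hand side of the proposed formula paired against an arbitrary $y \in H^-$:
\[
  \Bigl\langle y, \ts\sum_{(a)} \langle x, a_{(2)} \rangle a_{(1)} \Bigr\rangle
  = \sum_{(a)} \langle y, a_{(1)} \rangle \langle x, a_{(2)} \rangle.
\]
By the first Hopf pairing axiom applied to the product $yx \in H^-$ and $a \in H^+$,
\[
  \langle yx, a \rangle = \sum_{(a)} \langle y, a_{(1)} \rangle \langle x, a_{(2)} \rangle.
\]
Therefore the paired expression equals $\langle yx, a \rangle = \langle \pR{x}(y), a \rangle$, which is exactly the adjoint relation above.

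Finally, since the Hopf pairing is perfect (by the dual pair hypothesis in Definition \ref{def:dual-pair}), an element of $H^+$ is determined by its pairings with all $y \in H^-$. Hence the two elements $\pR{x}^*(a)$ and $\sum_{(a)} \langle x, a_{(2)} \rangle a_{(1)}$ of $H^+$ must coincide, proving the lemma. There is no real obstacle here; the only subtlety worth flagging is the implicit use of perfectness of the pairing (as opposed to mere nondegeneracy on one side), which is precisely what allows the formula to be read off from its matrix coefficients against $H^-$.
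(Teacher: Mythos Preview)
Your proof is correct and follows essentially the same approach as the paper: pair both sides against an arbitrary $y \in H^-$, expand using the Hopf pairing axiom $\langle yx, a \rangle = \sum_{(a)} \langle y, a_{(1)} \rangle \langle x, a_{(2)} \rangle$, and conclude by nondegeneracy. Your remark about needing perfectness (so that an element of $H^+$ is determined by its pairings against $H^-$) is a valid point of care, though the paper simply invokes ``nondegeneracy of the bilinear form'' without elaboration.
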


\begin{proof}
  For all $x,y \in H^-$ and $a \in H^+$, we have
  \begin{multline*} \ts
    \langle y, \pR{x}^*(a) \rangle = \langle yx, a \rangle = \langle y \otimes x, \Delta(a) \rangle \\ \ts
    = \sum_{(a)} \langle y \otimes x, a_{(1)} \otimes a_{(2)} \rangle = \sum_{(a)} \langle y, a_{(1)} \rangle \langle x, a_{(2)} \rangle = \langle y, \sum_{(a)} \langle x, a_{(2)} \rangle a_{(1)} \rangle.
  \end{multline*}
  The result then follows from the nondegeneracy of the bilinear form.
\end{proof}

It is clear that the map~\eqref{eq:H+action} is injective.  The map~\eqref{eq:H-action} is also injective.  Indeed, for $x \in H^-$, choose $a \in H^+$ such that $\langle x, a \rangle \ne 0$.  Then $\langle 1, \pR{x}^*(a) \rangle = \langle x, a \rangle \ne 0$, and so $\pR{x}^* \ne 0$.

Since $H^+ = \bigoplus_{n \in \N} H_n^+$ is $\N$-graded, we have a natural algebra $\Z$-grading $\End H^+ = \bigoplus_{n \in \Z} \End_n H^+$.  It is routine to verify that the map~\eqref{eq:H+action} sends $H^+_n$ to $\End_n H^+$ and the map~\eqref{eq:H-action} sends $H^-_n$ to $\End_{-n} H^+$ for all $n \in \N$.

\begin{defin}[The Heisenberg double, {\cite[Def.~3.1]{STS94}}] \label{def:h}
  We define $\fh(H^+,H^-)$ to be the \emph{Heisenberg double} of $H^+$.  More precisely $\fh(H^+,H^-) \cong H^+ \otimes H^-$ as $\kk$-modules, and we write $a \# x$ for $a \otimes x$, $a \in H^+$, $x \in H^-$, viewed as an element of $\fh(H^+,H^-)$.  Multiplication is given by
  \begin{equation} \ts
    (a \# x)(b \# y) := \sum_{(x)} a \pR{x_{(1)}}^*(b) \# x_{(2)}y = \sum_{(x),(b)} \langle x_{(1)}, b_{(2)} \rangle ab_{(1)} \# x_{(2)} y.
  \end{equation}
  We will often view $H^+$ and $H^-$ as subalgebras of $\fh(H^+,H^-)$ via the maps $a \mapsto a \# 1$ and $x \mapsto 1 \# x$ for $a \in H^+$ and $x \in H^-$.  Then we have $ax = a \# x$.  When the context is clear, we will simply write $\fh$ for $\fh(H^+,H^-)$.  We have a natural grading $\fh = \bigoplus_{n \in \Z} \fh_n$, where $\fh_n = \bigoplus_{k - \ell = n} H_k^+ \# H_\ell^-$.
\end{defin}

\begin{rem}
  The Heisenberg double is a twist of the Drinfeld quantum double by a right 2-cocycle (see \cite[Th.~6.2]{Lu94}).
\end{rem}

\begin{lem} \label{lem:adjoint-action-on-product}
  If $x \in H^-$ and $a,b \in H^+$, then
  \[ \ts
    \pR{x}^*(ab) = \sum_{(x)} \pR{x_{(1)}}^*(a) \pR{x_{(2)}}^*(b).
  \]
\end{lem}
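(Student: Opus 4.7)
The plan is to unpack both sides using Lemma~\ref{lem:left-reg-action} and then equate them by exploiting the fact that $\Delta$ on $H^+$ is an algebra homomorphism together with the defining property of a Hopf pairing that relates pairings on products to pairings on coproducts.

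First I would apply Lemma~\ref{lem:left-reg-action} to the left-hand side to obtain
\[ \ts
  \pR{x}^*(ab) = \sum_{(ab)} \langle x, (ab)_{(2)} \rangle (ab)_{(1)}.
\]
Since $\Delta$ is a morphism of algebras, $\Delta(ab) = \Delta(a)\Delta(b) = \sum_{(a),(b)} a_{(1)} b_{(1)} \otimes a_{(2)} b_{(2)}$, so this becomes $\sum_{(a),(b)} \langle x, a_{(2)} b_{(2)} \rangle a_{(1)} b_{(1)}$. Next, the Hopf pairing axiom $\langle x, yz \rangle = \sum_{(x)} \langle x_{(1)}, y \rangle \langle x_{(2)}, z \rangle$ splits this pairing as $\sum_{(x)} \langle x_{(1)}, a_{(2)} \rangle \langle x_{(2)}, b_{(2)} \rangle$, yielding
\[ \ts
  \pR{x}^*(ab) = \sum_{(x),(a),(b)} \langle x_{(1)}, a_{(2)} \rangle \langle x_{(2)}, b_{(2)} \rangle a_{(1)} b_{(1)}.
\]

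On the other side, two applications of Lemma~\ref{lem:left-reg-action} give
\[ \ts
  \sum_{(x)} \pR{x_{(1)}}^*(a)\, \pR{x_{(2)}}^*(b) = \sum_{(x),(a),(b)} \langle x_{(1)}, a_{(2)} \rangle\, \langle x_{(2)}, b_{(2)} \rangle\, a_{(1)} b_{(1)},
\]
which matches the expression above. The two sides therefore coincide.

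There is no real obstacle here: the identity is essentially the statement that the left-regular action makes $H^+$ into an $H^-$-module algebra, which is a formal consequence of the compatibility between the coproduct of $H^+$ and the pairing. The only care needed is to keep the Sweedler indices straight and to invoke coassociativity implicitly when identifying the iterated coproduct terms; both are routine.
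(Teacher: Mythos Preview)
Your proof is correct. It takes a somewhat different route from the paper's: you invoke the explicit formula of Lemma~\ref{lem:left-reg-action} and compute both sides directly, whereas the paper pairs $\pR{x}^*(ab)$ against an arbitrary $y \in H^-$, uses the adjointness relation $\langle y, \pR{x}^*(ab)\rangle = \langle yx, ab\rangle$, expands via $\Delta(yx)=\Delta(y)\Delta(x)$, and then concludes by nondegeneracy of the form. Your argument is slightly more self-contained in that nondegeneracy has already been absorbed into Lemma~\ref{lem:left-reg-action} and need not be invoked again; the paper's argument, on the other hand, makes the adjoint nature of the action more visible and avoids writing out the double Sweedler sums. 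Both are routine and of comparable length.
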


\begin{proof}
  For $x,y \in H^-$ and $a,b \in H^+$, we have
  \begin{multline*} \ts
    \langle y, \pR{x}^*(ab) \rangle = \langle yx, ab \rangle = \langle \Delta(yx), a \otimes b \rangle = \langle \Delta(y) \Delta(x), a \otimes b \rangle \\ \ts
    = \langle \Delta(y), \pR{\Delta(x)}^*(a \otimes b) \rangle = \langle \Delta(y), \sum_{(x)} \pR{x}_{(1)}^*(a) \otimes \pR{x}_{(2)}^*(b) \rangle = \langle y, \sum_{(x)}  \pR{x}_{(1)}^*(a) \pR{x}_{(2)}^*(b) \rangle.
  \end{multline*}
  The result then follows from the nondegeneracy of the bilinear form.
\end{proof}

Interchanging $H^-$ and $H^+$ in the construction of the Heisenberg double results in the opposite algebra: $\fh(H^-,H^+) \cong \fh(H^+,H^-)^\op$ (see~\cite[Prop.~5.3]{Lu94}).

\subsection{Fock space} \label{subsec:general-Fock-space}

We now introduce a natural representation of the algebra $\fh$.

\begin{defin}[Vacuum vector]
  An element $v$ of an $\fh$-module $V$ is called a \emph{lowest weight} (resp.\ \emph{highest weight}) \emph{vacuum vector} if $\kk v \cong \kk$ and $H^- v = 0$ (resp.\ $H^+ v = 0$).
\end{defin}

\begin{defin}[Fock space]
  The algebra $\fh$ has a natural (left) representation on $H^+$ given by
  \[
    (a \# x)(b) = a \pR{x}^*(b),\quad a,b \in H^+,\ x \in H^-.
  \]
  We call this the \emph{lowest weight Fock space representation} of $\fh(H^+,H^-)$ and denote it by $\cF = \cF(H^+,H^-)$.  Note that this representation is generated by the lowest weight vacuum vector $1 \in H^+$.
\end{defin}

Suppose $X^+$ is a subalgebra of $H^+$ that is invariant under the left-regular action of $H^-$ on $H^+$.  (Note that it follows that $X^+$ is a graded subalgebra of $H^+$.)  Then $X^+ \# H^-$ is a subalgebra of $\fh(H^+,H^-)$ acting naturally on $X^+$.  The following result (when $X^+=H^+$) is a generalization of the Stone--von Neumann Theorem to the setting of an arbitrary Heisenberg double.

\begin{theo} \label{theo:Fock-space-properties}
  Let $X^+$ be a subalgebra of $H^+$ that is invariant under the left-regular action of $H^-$ on $H^+$.
  \begin{enumerate}
    \item \label{theo-item:Fock-space-subreps} The only $(X^+ \# H^-)$-submodules of $X^+$ are those of the form $I X^+$ for some ideal $I$ of $\kk$.  In particular, if $\kk$ is a field, then $X^+$ is irreducible as an $(X^+ \# H^-)$-module.

    \item Let $\kk^- \cong \kk$ (isomorphism of $\kk$-modules) be the representation of $H^-$ such that $H^-_n$ acts as zero for all $n > 0$ and $H^-_0 \cong \kk$ acts by left multiplication.  Then $X^+$ is isomorphic to the induced module $\Ind^{X^+ \# H^-}_{H^-} \kk^-$ as an $(X^+ \# H^-)$-module.

    \item \label{theo-item:Stone-von-Neumann} Any $(X^+ \# H^-)$-module generated by a lowest weight vacuum vector is isomorphic to $X^+$.
  \suspend{enumerate}
  If $X^+=H^+$ then $X^+ \# H^- = \fh(H^+,H^-)$ and the module $X^+$ is the lowest weight Fock space $\cF$.  In that case we also have the following.
  \resume{enumerate}
    \item \label{theo-item:Fock-space-faithful} The lowest weight Fock space representation $\cF$ of $\fh$ is faithful.
  \end{enumerate}
\end{theo}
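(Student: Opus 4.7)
The plan is to handle (b) first via the universal property of induction, then derive (a) by induction on the top graded degree, use these two to deduce (c) quickly, and finally establish the faithfulness statement (d) by a double-grading argument. I first observe that $1 \in X^+$ is a lowest weight vacuum vector: for any $x \in H^-$ with $\varepsilon(x)=0$ (in particular any $x \in H^-_n$ with $n>0$), Lemma~\ref{lem:left-reg-action} gives $\pR{x}^*(1) = \langle x, 1 \rangle \cdot 1 = \varepsilon(x) \cdot 1 = 0$. For (b), this means the $H^-$-linear map $\kk^- \to X^+$ sending $1 \mapsto 1$ induces an $(X^+\# H^-)$-module map $\Phi \colon \Ind^{X^+\#H^-}_{H^-} \kk^- \to X^+$. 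To see $\Phi$ is bijective, the identity $(a \# 1)(1 \# y) = a \# y$ exhibits $X^+ \# H^-$ as a free right $H^-$-module with basis $X^+ \otimes 1$, so $(X^+\# H^-) \otimes_{H^-} \kk^- \cong X^+ \otimes_\kk \kk \cong X^+$ as $\kk$-modules, and under this identification $\Phi$ is the identity map.

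For (a), let $M$ be an $(X^+\# H^-)$-submodule of $X^+$ and set $I = \{c \in \kk : c \cdot 1 \in M\}$, which is an ideal of $\kk$. The inclusion $IX^+ \subseteq M$ is immediate since left multiplication by $X^+$ preserves $M$. For the reverse inclusion, I induct on the top degree $N$ of $m = \sum_{k=0}^N m_k \in M$ with $m_k \in X^+_k$. The crucial computation is that for any $x \in H^-_N$, the formula of Lemma~\ref{lem:left-reg-action} combined with the fact that $\Delta(m_N)$ has $H^+_0 \otimes H^+_N$-component equal to $1 \otimes m_N$ yields $\pR{x}^*(m_N) = \langle x, m_N \rangle \cdot 1 \in \kk$, while $\pR{x}^*(m_k) = 0$ for $k < N$ by degree. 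Thus $\pR{x}^*(m) = \langle x, m_N \rangle \cdot 1 \in M$, so $\langle x, m_N \rangle \in I$ for every $x \in H^-_N$. The perfect pairing between the finitely generated free $\kk$-modules $H^-_N$ and $H^+_N$ then forces $m_N \in IH^+_N$, and using that $m_N$ already lies in $X^+_N$ one concludes $m_N \in IX^+ \subseteq M$. Then $m - m_N \in M$ has strictly smaller top degree and the induction closes.

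Part (c) follows immediately from (a) and (b): if $V$ is generated by a lowest weight vacuum vector $v$, then sending $1 \in \kk^-$ to $v$ yields a surjection $X^+ \cong \Ind^{X^+\#H^-}_{H^-} \kk^- \twoheadrightarrow V$ whose kernel is some $IX^+$ by (a); but the degree-zero component $I \cdot 1$ of $IX^+$ must annihilate $v$, and the hypothesis $\kk v \cong \kk$ forces $I = 0$. For (d), suppose $T \in \fh$ satisfies $T \cdot \cF = 0$. Since the $\Z$-grading of $\fh$ is compatible with the $\N$-grading of $\cF$, I may assume $T \in \fh_n$, and decompose $T = \sum_{k \geq 0} T^{[k]}$ with $T^{[k]} \in H^+_{n+k} \otimes H^-_k$. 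I induct on $k$: assuming $T^{[j]} = 0$ for $j < k$, I evaluate on $b \in H^+_k$. Because $\pR{x}^*(b) = 0$ when $\deg x > \deg b$, only $T^{[k]}$ contributes, so $0 = T \cdot b = \sum_j \langle x_j, b \rangle a_j$ where $T^{[k]} = \sum_j a_j \otimes x_j$ with $\{x_j\}$ a basis of $H^-_k$. The perfect pairing $H^-_k \times H^+_k \to \kk$ then forces each $a_j = 0$, hence $T^{[k]} = 0$.

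The main technical obstacle is the final step in the inductive argument for (a): the perfect pairing naturally locates $m_N$ only in $IH^+_N$, and one needs it inside $IX^+_N$. This is where the hypothesis that $X^+$ is a subalgebra of $H^+$ stable under the left-regular $H^-$-action must be used structurally (beyond the purely formal manipulations elsewhere in the proof), for instance to ensure that $X^+_N$ sits compatibly inside $H^+_N$ as a $\kk$-module. All other steps are essentially formal consequences of the graded-connectedness of the Hopf algebras, the perfect pairing, and the definition of the Heisenberg double multiplication.
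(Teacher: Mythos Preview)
Your argument is correct and follows essentially the same route as the paper for parts (b), (c), and (d). The one genuine issue is the gap you yourself flag in part (a): from $\langle x, m_N\rangle \in I$ for all $x\in H^-_N$ you get $m_N\in IH^+_N$, and you need $m_N\in IX^+_N$. Your suggestion that the $H^-$-invariance of $X^+$ is what closes this gap is not right; that hypothesis is already fully consumed in making $X^+\# H^-$ act on $X^+$ and gives no $\kk$-module splitting information.

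The paper resolves this by the device you are missing: choose a $\kk$-basis $b_1,\dotsc,b_m$ of $H^+_N$ such that $b_1,\dotsc,b_k$ is a basis of $X^+_N$, with dual basis $x_1,\dotsc,x_m$ of $H^-_N$. Then the element $\sum_{j=1}^k b_j\# x_j$ lies in $X^+\# H^-$ (because $b_j\in X^+$), acts as the identity on $X^+_N$, and kills $X^+_{<N}$. Applying it to $m$ gives $m_N=\sum_{j=1}^k b_j\cdot \pR{x_j}^*(m)$, and each $\pR{x_j}^*(m)\in M\cap H^+_0 = I\cdot 1$, so $m_N\in IX^+$ directly. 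Equivalently, in your language: since $m_N\in X^+_N$ we have $m_N=\sum_{j=1}^k c_j b_j$ with $c_j=\langle x_j,m_N\rangle\in I$, hence $m_N\in IX^+_N$. Either way, the point is the compatible basis, not the $H^-$-invariance. (Note that the paper, like you, is tacitly assuming $X^+_N$ is a free direct summand of $H^+_N$; over a general commutative ring this is an extra hypothesis, though it holds in all the applications.)
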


\begin{proof}
  Throughout the proof, we write $1$ for $1_{H^+} = 1_{X^+}$.
  \begin{asparaenum}
    \item Clearly, if $I$ is an ideal of $\kk$, then $I X^+$ is a submodule of $X^+$.  Now suppose $W \subseteq X^+$ is a nonzero submodule, and let
        \[
          I = \{c \in \kk\ |\ c1 \in W\}.
        \]
        It is easy to see that $I$ is an ideal of $\kk$.  We claim that $W = I X^+$.  Since the element $1$ generates $X^+$, we clearly have $I X^+ \subseteq W$.  Now suppose there exists $a \in W$ such that $a \not \in I X^+$. Without loss of generality, we can write $a = \sum_{n=0}^\ell a_n$ for $a_n \in H_n^+$, $\ell \in \N$, and $a_\ell \not \in I X^+$ (otherwise, consider $a-a_\ell$).  Let $b_1,\dotsc,b_m$ be a basis of $H_\ell^+$ such that $b_1,\dotsc,b_k$ is a basis of $X_\ell^+$, for $k = \dim_\kk X_\ell^+$.  Let $x_1,\dotsc,x_m$ be the dual basis of $H_\ell^-$.  Then it is easy to verify that $\sum_{j=1}^k b_j \# x_j$ acts as the identity on $X_\ell^+$ and as zero on $X_n^+$ for $n < \ell$. Thus,
        \[ \ts
          a_\ell = \sum_{j=1}^k (b_j \# x_j)(a) \in \sum_{j=1}^k b_j I X^+ \subseteq I X^+,
        \]
        since $\pR{x_j}^*(a) \in W \cap H_0^+$ for all $j=1,\dotsc,k$.  This contradiction completes the proof.

    \item We have an injective homomorphism of $H^-$-modules
        \[
          \kk^- \hookrightarrow \Res^{X^+ \# H^-}_{H^-} X^+, \quad 1 \mapsto 1.
        \]
        Since induction is left adjoint to restriction (see, for example, \cite[(2.19)]{CR81}), this gives rise to a homomorphism of $\fh$-modules
        \begin{equation} \label{eq:Ind-to-F}
          \Ind^{X^+ \# H^-}_{H^-} \kk^- \to X^+,\quad 1 \mapsto 1.
        \end{equation}
        Since the element $1$ generates $X^+$, this map is surjective.  Now,
        \[
          \Ind^{X^+ \# H^-}_{H^-} \kk^- = (X^+ \# H^-) \otimes_{H^-} \kk^- = X^+ H^- \otimes_{H^-} \kk^-.
        \]
        It follows that, as a left $X^+$-module, $\Ind^{X^+ \# H^-}_{H^-} \kk^-$ is a quotient of $X^+ \otimes_\kk \kk^- \cong X^+$ and the map~\eqref{eq:Ind-to-F} is the identity map, hence an isomorphism.

    \item Suppose $V$ is a representation of $X^+ \# H^-$ generated by a lowest weight vacuum vector $v_0$.  Then, as above, we have an injective homomorphism of $H^-$-modules
        \[
          \kk^- \hookrightarrow \Res^{X^+ \# H^-}_{H^-} V, \quad 1 \mapsto v_0,
        \]
        and thus a homomorphism of $(X^+ \# H^-)$-modules
        \begin{equation}
          \Ind^{X^+ \# H^-}_{H^-} \kk^- \to V,\quad 1 \mapsto v_0.
        \end{equation}
        Since $V$ is generated by $v_0$, this map is surjective.  Since $\Ind^{X^+ \# H^-}_{H^-} \kk^- \cong X^+$, it follows easily from part~\eqref{theo-item:Fock-space-subreps} that it is also injective.

    \item Suppose $\alpha$ is a nonzero element of $\fh$.  Write $\alpha = \alpha' + \alpha''$ where $\alpha'$ is a nonzero element of $H^+ \# H^-_n$ for some $n \in \N$ and $\alpha'' \in \sum_{k>n} H^+ \# H^-_k$. Choose a basis $x_1,\dotsc,x_m$ of $H_n^-$ and let $b_1,\dotsc,b_m$ denote the dual basis of $H_n^+$.  Then we can write $\alpha' = \sum_{j=1}^m a_j \# x_j$ for some $a_j \in H^+$.  Since $\alpha' \ne 0$, we have $a_j \ne 0$ for some $j$.  Then $\alpha(b_j) = \alpha'(b_j) = a_j \ne 0$.  Thus the action of $\fh$ on $\cF$ is faithful. \qedhere
  \end{asparaenum}
\end{proof}

\begin{rem}
  By Theorem~\ref{theo:Fock-space-properties}\eqref{theo-item:Fock-space-faithful}, we may view $\fh$ as the subalgebra of $\End H^+$ generated by $\pL{a}$, $a \in H^+$, and $\pR{x}^*$, $x \in H^-$.
\end{rem}

%
\section{Towers of algebras and categorification of Fock space} \label{sec:dual-pairs-from-towers}
%

In this section, we consider dual Hopf algebras arising from towers of algebras.  In this case, we are able to deduce some further results about the Heisenberg double $\fh$.  We then prove our main result, that towers of algebras give rise to categorifications of the lowest weight Fock space representation of $\fh$ (Theorem~\ref{theo:categorification}).  Recall that $\F$ is an arbitrary field.

\subsection{Modules categories and their Grothendieck groups} \label{subsec:module-categories}

For an arbitrary $\F$-algebra $B$, let $B\md$ denote the category of finitely generated left $B$-modules and let $B\pmd$ denote the category of finitely generated projective left $B$-modules.  We then define
\[
  G_0(B) = \cK_0(B\md) \quad \text{and} \quad K_0(B) = \cK_0(B\pmd),
\]
where $\cK_0(B\md)$ denotes the Grothendieck group of the abelian category $B\md$, and $\cK_0(B\pmd)$ denotes the (split) Grothendieck group of the additive category $B\pmd$.  For $\mathcal{C} = B\md$ or $B\pmd$, we denote the class of an object $M \in \mathcal{C}$ in $\cK_0(\mathcal{C})$ by $[M]$.

There is a natural bilinear form
\begin{equation} \label{eq:KG-innerprod}
  \langle \cdot, \cdot \rangle \colon K_0(B) \otimes G_0(B) \to \Z,\quad \langle [P], [M] \rangle = \dim_\F \Hom_B(P,M).
\end{equation}
If $B$ is a finite dimensional algebra, let $V_1,\dotsc,V_s$ be a complete list of nonisomorphic simple $B$-modules.  If $P_i$ is the projective cover of $V_i$ for $i=1,\dotsc,s$, then $P_1,\dotsc,P_s$ is a complete list of nonisomorphic indecomposable projective $B$-modules (see, for example, \cite[Cor.~I.4.5]{ARS95}) and we have
\[ \ts
  G_0(B) = \bigoplus_{i=1}^s \Z[V_i] \quad \text{and} \quad K_0(B) = \bigoplus_{i=1}^s \Z[P_i].
\]
If $\F$ is algebraically closed, then
\begin{equation} \label{eq:pairing-simple-proj}
  \langle [P_i], [V_j] \rangle = \delta_{ij} \quad \text{for } 1 \le i,j \le s,
\end{equation}
and so the pairing~\eqref{eq:KG-innerprod} is perfect.

Suppose $\varphi \colon B \to A$ is an algebra homomorphism.  Then we can consider $A$ as a left $B$-module via the action $(b, a) \mapsto \varphi(b) a$.  Similarly, we can consider $A$ as a right $B$-module.  Then we have \emph{induction} and \emph{restriction} functors
\begin{gather*}
  \Ind^A_B \colon B\md \to A\md,\quad \Ind^A_B N := A \otimes_B N,\quad N \in B\md, \\
  \Res^A_B \colon A\md \to B\md,\quad \Res^A_B M := \Hom_A(A,M) \cong {_BA} \otimes_A M,\quad M \in A\md,
\end{gather*}
where $_BA$ denotes $A$ considered as a $(B,A)$-bimodule and the left $B$-action on $\Hom_A(A,M)$ is given by $(b, f) \mapsto f \circ \pR{b}$ for $f \in \Hom_A(A,M)$ and $b \in B$ (here $\pR{b}$ denotes right multiplication by $b$).  The isomorphism above is given by the map $f \mapsto 1 \otimes f(1_A)$ for $f \in \Hom_A(A,M)$.  This isomorphism is natural in $M$ and so we have an isomorphism of functors $\Res^A_B \cong {_BA} \otimes_A -$.

\subsection{Towers of algebras} \label{subsec:towers}

\begin{defin}[Tower of algebras] \label{def:tower}
  Let $A = \bigoplus_{n \in \N} A_n$ be a graded algebra over a field $\F$ with multiplication $\rho \colon A \otimes A \to A$.  Then $A$ is called a \emph{tower of algebras} if the following conditions are satisfied:
  \begin{enumerate}
    \item[(TA1)] Each graded piece $A_n$, $n \in \N$, is a finite dimensional algebra (with a different multiplication) with a unit $1_n$.  We have $A_0 \cong \F$.
    \item[(TA2)] The external multiplication $\rho_{m,n} \colon A_m \otimes A_n \to A_{m+n}$ is a homomorphism of algebras for all $m,n \in \N$ (sending $1_m \otimes 1_n$ to $1_{m+n}$).
    \item[(TA3)] We have that $A_{m+n}$ is a two-sided projective $(A_m \otimes A_n)$-module with the action defined by
        \[
          a \cdot (b \otimes c) = a \rho_{m,n}(b \otimes c) \quad \text{and} \quad (b \otimes c) \cdot a = \rho_{m,n}(b \otimes c) a,
        \]
        for all $m,n \in \N$, $a \in A_{m+n}$, $b \in A_m$, $c \in A_n$.
    \item[(TA4)] For each $n \in \N$, the pairing~\eqref{eq:KG-innerprod} (with $B=A_n$) is perfect.  (Note that this condition is automatically satisfied if $\F$ is an algebraically closed field, by~\eqref{eq:pairing-simple-proj}.)
    \end{enumerate}
\end{defin}

For the remainder of this section we assume that $A$ is a tower of algebras.  We let
\begin{equation} \label{eq:G0A-def} \ts
  \cG(A) = \bigoplus_{n \in \N} G_0(A_n) \quad \text{and} \quad \cK(A) = \bigoplus_{n \in \N} K_0(A_n).
\end{equation}
Then we have a perfect pairing $\langle \cdot, \cdot \rangle \colon \cK(A) \times \cG(A) \to \Z$ given by
\begin{equation} \label{eq:tower-pairing}
  \langle [P], [M] \rangle =
  \begin{cases}
    \dim_\F (\Hom_{A_n}(P,M)) & \text{if } P \in A_n\pmd \text{ and } M \in A_n\md \text{ for some } n \in \N, \\
    0 & \text{otherwise}.
  \end{cases}
\end{equation}
We also define a perfect pairing $\langle \cdot, \cdot \rangle \colon (\cK(A) \otimes \cK(A)) \times (\cG(A) \otimes \cG(A))$ by
\[
  \langle [P] \otimes [Q], [M] \otimes [N] \rangle =
  \begin{cases}
    \dim_\F (\Hom_{A_k \otimes A_\ell}(P \otimes Q, M \otimes N)) \text{ if } P \in A_k\pmd,\ Q \in A_\ell\pmd \\
    \quad \text{ and } M \in A_k\md,\ N \in A_\ell\md \text{ for some } k,\ell \in \N, \\
    0 \text{ otherwise}.
  \end{cases}
\]
Thus we have $\langle [P] \otimes [Q], [M] \otimes [N] \rangle = \langle [P], [M] \rangle \langle [Q], [N] \rangle$.

Consider the direct sums of categories
\[ \ts
  A\md := \bigoplus_{n \in \N} A_n\md,\quad A\pmd := \bigoplus_{n \in \N} A_n\pmd.
\]
For $r \in \N_+$, we define
\begin{gather*} \ts
  A\md^{\otimes r} := \bigoplus_{n_1,\dotsc,n_r \in \N} (A_{n_1} \otimes \dotsb \otimes A_{n_r})\md,\\ \ts
  A\pmd^{\otimes r} := \bigoplus_{n_1,\dotsc,n_r \in \N} (A_{n_1} \otimes \dotsb \otimes A_{n_r})\pmd.
\end{gather*}
Then, for $i,j \in \{1,\dotsc,r\}$, $i < j$, we define $S_{ij} \colon A\md^{\otimes r} \to A\md^{\otimes r}$ to be the endofunctor that interchanges the $i$th and $j$th factors, that is, the endofunctor arising from the isomorphism
\[
  A_{n_1} \otimes \dotsb \otimes A_{n_r} \cong A_{n_1} \otimes \dotsb \otimes A_{n_{i-1}} \otimes A_{n_j} \otimes A_{n_{i+1}} \otimes \dotsb \otimes A_{n_{j-1}} \otimes A_{n_i} \otimes A_{n_{j+1}} \otimes \dotsb \otimes  A_{n_r}.
\]
We use the same notation to denote the analogous endofunctor on $A\pmd^{\otimes r}$.

We also have the following functors:
\begin{equation} \label{eq:Groth-operations}
  \begin{split}
  \nabla \colon A\md^{\otimes 2} \to A\md,\quad \nabla|_{(A_m \otimes A_n)\md} = \Ind^{A_{m + n}}_{A_m \otimes A_n}, \\ \ts
  \Delta \colon A\md \to A\md^{\otimes 2}, \quad \Delta|_{A_n\md} = \bigoplus_{k+\ell =n } \Res^{A_n}_{A_k \otimes A_\ell}, \\
  \eta \colon \Vect \to A\md,\quad \eta(V) = V \in A_0\md \text{ for } V \in \Vect, \\
  \varepsilon \colon A\md \to \Vect,\quad \varepsilon(V) =
  \begin{cases}
    V & \text{if } V \in A_0\md, \\
    0 & \text{otherwise}.
  \end{cases}
  \end{split}
\end{equation}
In the above, we have identified $A_0\md$ with the category $\Vect$ of finite dimensional vector spaces over $\F$.  Replacing $A\md$ by $A\pmd$ above, we also have the functors $\nabla$, $\Delta$, $\eta$ and $\varepsilon$ on $A\pmd$.  Since the above functors are all exact (we use axiom (TA3) here), they induce a multiplication, comultiplication, unit and counit on $\cG(A)$ and $\cK(A)$.  We use the same notation to denote these induced maps.

Since induction is always left adjoint to restriction (see, for example, \cite[(2.19)]{CR81}), $\nabla$ is left adjoint to $\Delta$.  However, in many examples of towers of algebras (e.g.\ nilcoxeter algebras and 0-Hecke algebras), induction is not right adjoint to restriction.  Nevertheless, we often have something quite close to this property.  Any algebra automorphism $\psi_n$ of $A_n$ induces an isomorphism of categories $\Psi_n \colon A_n\md \to A_n\md$ (which restricts to an isomorphism of categories $\Psi_n \colon A_n\pmd \to A_n\pmd$) by twisting the $A_n$ action.  Then $\Psi := \bigoplus_{n \in \N} \Psi_n$ is an automorphism of the categories $A\md$ and $A\pmd$.  It induces automorphisms (which we also denote by $\Psi$) of $\cG(A)$ and $\cK(A)$.

\begin{defin}[Conjugate adjointness] \label{def:conjugate-adjoint}
  Given a tower of algebras $A$, we say that induction is \emph{conjugate right adjoint} to restriction (and restriction is \emph{conjugate left adjoint} to induction) with \emph{conjugation} $\Psi$ if there are isomorphisms of algebras $\psi_n \colon A_n \to A_n$, $n \in \N$,  such that $\nabla$ is right adjoint to $\Psi^{\otimes 2} \Delta \Psi^{-1}$.
\end{defin}

The following lemma will be useful since many examples of towers of algebras are in fact composed of Frobenius algebras.  We refer the reader to~\cite{SY11} for background on Frobenius algebras.

\begin{lem} \label{lem:Frobenius-conjugate}
  If each $A_n$, $n \in \N$, is a Frobenius algebra, then induction is conjugate right adjoint to restriction, with conjugation given by $\psi_n$ being the inverse of the Nakayama automorphism of $A_n$.
\end{lem}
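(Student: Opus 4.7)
Set $A := A_{m+n}$ and $B := A_m \otimes A_n$. Both are finite-dimensional Frobenius algebras: $A$ by hypothesis, and $B$ because tensor products of Frobenius algebras are Frobenius, with Nakayama automorphism $\nu_B = \nu_m \otimes \nu_n$. Write $\nu_A := \nu_{m+n}$ and $\psi_k := \nu_k^{-1}$, so that $\Psi_k$ twists the $A_k$-action on a module by $\psi_k$. My strategy is to reduce the desired conjugate adjunction to the classical restriction-coinduction adjunction, via an explicit identification of the coinduction functor $\Hom_B(A,-)$ with a Nakayama-twisted induction.

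The key step is a natural isomorphism of functors $B\md \to A\md$,
\[
  \Hom_B(A,-) \,\cong\, \Psi^{-1} \circ \Ind^A_B \circ \Psi^{\otimes 2}.
\]
By axiom~(TA3), $A$ is finitely generated projective as a left $B$-module, so $\Hom_B(A, N) \cong \Hom_B(A, B) \otimes_B N$ naturally in $N$. Using the Frobenius form $\lambda_B$, the left $B$-module isomorphism $B \cong \Hom_\F(B,\F)$ induces an isomorphism $\Hom_B(A, B) \cong \Hom_\F(A,\F)$ of left $A$-modules, under which the natural right $B$-action gets precomposed by $\nu_B^{-1}$. Using the Frobenius form $\lambda_A$, the left $A$-module isomorphism $A \cong \Hom_\F(A,\F)$ translates the natural right $A$-action on $\Hom_\F(A,\F)$ into right multiplication on $A$ twisted by $\nu_A$. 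Composing these two identifications yields an $(A,B)$-bimodule isomorphism $\Hom_B(A, B) \cong A$ in which the right $B$-action is $a \cdot b = a \cdot \nu_A(\nu_B^{-1}(b))$. Interpreting the inner $\nu_B^{-1}$-twist as $\Psi^{\otimes 2}$ applied to $N$ and the outer $\nu_A$-twist as $\Psi^{-1}$ applied to the induced $A$-module yields the claimed functor isomorphism. Granted this, the adjunction follows from the diagram chase
\begin{align*}
  \Hom_B(\Psi^{\otimes 2}\Delta\Psi^{-1} M, N)
  &\cong \Hom_B(\Res^A_B \Psi^{-1} M,\, (\Psi^{\otimes 2})^{-1} N) \\
  &\cong \Hom_A(\Psi^{-1} M,\, \Hom_B(A, (\Psi^{\otimes 2})^{-1} N)) \\
  &\cong \Hom_A(\Psi^{-1} M,\, \Psi^{-1}\Ind^A_B N) \\
  &\cong \Hom_A(M,\, \Ind^A_B N),
\end{align*}
where we use successively that $\Psi^{\otimes 2}$ is an auto-equivalence, the restriction-coinduction adjunction, the key identification, and that $\Psi$ is an auto-equivalence.

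The main obstacle is the bimodule identification of $\Hom_B(A,B)$ with twisted $A$. The twist $b \mapsto \nu_A(\nu_B^{-1}(b))$ is a ring homomorphism $B \to A$ that need not land inside $B$, so the resulting twisted right $B$-action on $A$ is not captured by an automorphism of $B$; the bulk of the work is checking that, despite this subtlety, the two Frobenius structures fit together to produce a well-defined functor isomorphism that is natural in $N$, and that the combined twist factors cleanly as the action of $\Psi^{-1}$ on one side and $\Psi^{\otimes 2}$ on the other.
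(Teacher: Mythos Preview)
Your plan is sound and is essentially a from-scratch proof of \cite[Lem.~1]{Kho01}, which is exactly what the paper cites (the paper's proof is a one-line appeal to that lemma with $B_1 = A_{m+n}$, $B_2 = A_m \otimes A_n$, $N = A_{m+n}$). Your reduction to the restriction--coinduction adjunction and the computation of $\Hom_B(A,B)$ via the two Frobenius forms is the standard route and your bimodule description $\Hom_B(A,B)\cong A$ with right $B$-action $a\cdot b = a\,\nu_A(\nu_B^{-1}(b))$ is correct (with the convention $\lambda(xy)=\lambda(y\,\nu(x))$).

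There is, however, a genuine sign issue in the step you flag as ``the main obstacle,'' and it is not merely cosmetic. If $\Psi_k$ is pullback along $\psi_k=\nu_k^{-1}$, then $\Psi^{\otimes 2}$ twists the $B$-action by $\nu_B^{-1}$ and $\Psi^{-1}$ twists the $A$-action by $\nu_A$. Working out the $(A,B)$-bimodule that represents $\Psi^{-1}\circ\Ind^A_B\circ\Psi^{\otimes 2}$ carefully (absorb the $N$-side twist into the right $B$-action on $A$, then straighten the left $A$-action via $a\mapsto\nu_A^{-1}(a)$) one finds the right $B$-action $a\cdot b = a\,\nu_A^{-1}(\nu_B(b))$, not $a\,\nu_A(\nu_B^{-1}(b))$. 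In other words, a $\nu_B^{-1}$-twist on the \emph{module} $N$ becomes a $\nu_B$-twist on the \emph{bimodule}, and untwisting the left $A$-action introduces $\nu_A^{-1}$ on the right, not $\nu_A$. So your ``interpreting the inner $\nu_B^{-1}$-twist as $\Psi^{\otimes 2}$ and the outer $\nu_A$-twist as $\Psi^{-1}$'' step silently inverts both twists. With your bimodule formula, the functor you actually obtain is $\Psi\circ\Ind^A_B\circ(\Psi^{\otimes 2})^{-1}$, and your diagram chase then yields that $\nabla$ is right adjoint to $(\Psi^{\otimes 2})^{-1}\Delta\Psi$ rather than to $\Psi^{\otimes 2}\Delta\Psi^{-1}$. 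Whether this discrepancy reflects a convention mismatch (there are two common conventions for the Nakayama automorphism, and the paper's usage in Lemma~4.2 is the inverse of the one you implicitly use) or an actual error in the stated direction of $\psi_n$, you need to pin down the conventions and redo the bimodule-to-functor translation with care; as written, the key factorization does not check out.
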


\begin{proof}
  This follows from~\cite[Lem.~1]{Kho01} by taking $B_1 = A_{m+n}$, $B_2=A_m \otimes A_n$ and $N$ to be $A_{m+n}$, considered as an $(A_{m+n},A_m \otimes A_n)$-bimodule in the natural way.
\end{proof}

\begin{defin}[Strong tower of algebras] \label{def:strong}
  We say that a tower of algebras $A$ is \emph{strong} if induction is conjugate right adjoint to restriction and we have an isomorphism of functors
  \begin{equation} \label{eq:strong-isom} \ts
    \Delta \nabla \cong \nabla^{\otimes 2} S_{23} \Delta^{\otimes 2}.
  \end{equation}
\end{defin}

\begin{rem}
  The isomorphism~\eqref{eq:strong-isom} is a compatibility between induction and restriction that is analogous to the well known Mackey theory for finite groups.  It implies that $\cK(A)$ and $\cG(A)$ are Hopf algebras under the operations defined above (see~\cite[Th.~3.5]{BL09} -- although the authors of that paper work over $\C$ and they assume that the external multiplication maps $\rho_{m,n}$ are injective, the arguments hold in the more general setting considered here).
\end{rem}

\begin{defin}[Dualizing tower of algebras]
  We say that a tower of algebras $A$ is \emph{dualizing} if, under the operations defined above, $\cK(A)$ and $\cG(A)$ are Hopf algebras which are dual, in the sense of Definition~\ref{def:dual-pair} (with $\kk = \Z$), under the bilinear form~\eqref{eq:tower-pairing} (i.e.\ the perfect pairing~\eqref{eq:tower-pairing} is a Hopf pairing).
\end{defin}

\begin{prop} \label{prop:dualizing-conjugation-isom}
  Suppose $A$ is a strong tower of algebras with conjugation $\Psi$.  Then the following statements are equivalent.
  \begin{enumerate}
    \item The tower $A$ is dualizing.

    \item \label{eq:conjugate-coproduct-identity} We have $\Psi^{\otimes 2} \Delta \Psi^{-1} (P) \cong \Delta(P)$ for all $P \in A\pmd$.

    \item \label{eq:conjugate-coproduct-identity-classes} We have $\Psi^{\otimes 2} \Delta \Psi^{-1} = \Delta$ as maps $\cK(A) \to \cK(A) \otimes \cK(A)$.
  \end{enumerate}
  In particular $A$ is dualizing if each $A_n$ is a symmetric algebra (i.e.\ if $\Psi = \id$) or, more generally, if $\Psi$ acts trivially on $\cK(A)$.
\end{prop}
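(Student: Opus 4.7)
The plan is to isolate the only nontrivial piece of the dualizing hypothesis, re-express it via the conjugate right adjoint, and observe that the resulting identity in $\cK(A)\otimes \cK(A)$ is exactly condition~(c); the passage from Grothendieck classes back to actual modules will then yield (b)$\Leftrightarrow$(c) by Krull--Schmidt.

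Since $A$ is strong, the remark following Definition~\ref{def:strong} already equips $\cK(A)$ and $\cG(A)$ with compatible Hopf algebra structures, so being \emph{dualizing} reduces to verifying that the pairing $\langle\cdot,\cdot\rangle$ is a Hopf pairing. The unit and counit identities are immediate from $A_0\cong\F$ and the definitions of $\eta$ and $\varepsilon$, while ordinary Ind--Res adjointness applied to $\nabla(P\otimes Q)$ yields
\[
  \langle [P]\cdot[Q],[M]\rangle = \dim_\F \Hom_{A_{m+n}}(\nabla(P\otimes Q),M) = \dim_\F \Hom_{A_m\otimes A_n}(P\otimes Q,\Delta M) = \langle [P]\otimes[Q],\Delta[M]\rangle.
\]
So the only condition to check is $\langle [P],[M]\cdot[N]\rangle = \langle \Delta[P],[M]\otimes[N]\rangle$ for all $P\in A\pmd$ and $M,N\in A\md$.

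To handle this, I would apply the conjugate right adjointness of $\nabla$ to rewrite the left hand side:
\[
  \langle [P],[\nabla(M\otimes N)]\rangle = \dim_\F \Hom\bigl(\Psi^{\otimes 2}\Delta\Psi^{-1}P,\,M\otimes N\bigr) = \langle [\Psi^{\otimes 2}\Delta\Psi^{-1}P],\,[M]\otimes[N]\rangle.
\]
The dualizing condition then becomes $\langle [\Psi^{\otimes 2}\Delta\Psi^{-1}P],[M]\otimes[N]\rangle = \langle \Delta[P],[M]\otimes[N]\rangle$ for all $M,N,P$. By nondegeneracy of the perfect pairing between $\cK(A)\otimes \cK(A)$ and $\cG(A)\otimes \cG(A)$, this is equivalent to $[\Psi^{\otimes 2}\Delta\Psi^{-1}P] = \Delta[P]$ in $\cK(A)\otimes\cK(A)$ for every $P\in A\pmd$, which is precisely condition~(c). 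This establishes (a)$\Leftrightarrow$(c).

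The implication (b)$\Rightarrow$(c) is immediate on passing to Grothendieck classes. For (c)$\Rightarrow$(b), I would observe that axiom (TA3) ensures $\Delta$ sends $A\pmd$ into $A\pmd^{\otimes 2}$ (since $A_{m+n}$ is left projective over $A_m\otimes A_n$), and $\Psi$ preserves $A\pmd$ because it is induced by algebra automorphisms; hence both $\Psi^{\otimes 2}\Delta\Psi^{-1}P$ and $\Delta P$ lie in $A\pmd^{\otimes 2}$. Since each $A_n$ is a finite-dimensional $\F$-algebra, Krull--Schmidt applies, the classes of indecomposable projectives give a $\Z$-basis of $\cK(A)$, and equality in $\cK(A)\otimes\cK(A)$ forces isomorphism of the modules themselves. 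The main subtlety in the whole argument is precisely this translation between the pairing identity and the functorial statement (b), which is where finite-dimensionality is essential; the rest is a direct unpacking of definitions. Finally, the concluding special cases are now obvious: if $\Psi$ acts trivially on $\cK(A)$, condition~(c) is automatic, and if each $A_n$ is symmetric then one can choose the Frobenius trace so that the Nakayama automorphism is $\id_{A_n}$, giving $\Psi=\id$ on the nose.
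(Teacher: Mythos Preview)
Your argument is correct and follows essentially the same route as the paper: both isolate the single nontrivial Hopf-pairing identity $\langle [P],\nabla([M]\otimes[N])\rangle = \langle \Delta[P],[M]\otimes[N]\rangle$, rewrite the left side via the conjugate adjunction as $\langle [\Psi^{\otimes 2}\Delta\Psi^{-1}P],[M]\otimes[N]\rangle$, and invoke nondegeneracy; the equivalence (b)$\Leftrightarrow$(c) is handled identically in both by the splitting of short exact sequences of projectives. The only cosmetic difference is that the paper packages the logic as (b)$\Rightarrow$(a)$\Rightarrow$(c) together with (b)$\Leftrightarrow$(c), whereas you go straight to (a)$\Leftrightarrow$(c); your treatment is also slightly more explicit about the remaining Hopf-pairing axioms, which the paper simply cites from \cite{BL09}.
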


\begin{proof}
  First assume that~\eqref{eq:conjugate-coproduct-identity} holds.  With one exception, the proof that $A$ is dualizing then proceeds exactly as in the proof of~\cite[Th.~3.6]{BL09} since~\eqref{eq:strong-isom} implies axiom~(5) in~\cite[\S3.1]{BL09}. (Although the authors of that paper work over $\C$ and they assume that the external multiplication maps $\rho_{m,n}$ are injective, the arguments hold in the more general setting considered here.)  The one exception is in the proof that
  \begin{equation} \label{eq:BLproofeq}
    \langle \Delta ([P]), [M] \otimes [N] \rangle = \langle [P], \nabla ([M] \otimes [N]) \rangle,
  \end{equation}
  for all $M \in A_m\md$, $N \in A_n\md$, $P \in A_{m+n}\pmd$.  However, under our assumptions, we have
  \[
    \Hom_{A_{m+n}} (P, \nabla(M \otimes N)) \cong \Hom_{A_m \otimes A_n} (\Psi^{\otimes 2} \Delta \Psi^{-1}(P), M \otimes N) \cong \Hom_{A_m \otimes A_n} (\Delta(P), M \otimes N),
  \]
  which immediately implies~\eqref{eq:BLproofeq}.  Thus $A$ is dualizing.

  Now suppose $A$ is dualizing.  Then, for all $P \in A\pmd$ and $M \in A\md^{\otimes 2}$, we have
  \[
    \langle \Psi^{\otimes 2} \Delta \Psi^{-1}([P]), [M] \rangle = \langle [P], \nabla([M]) \rangle = \langle \Delta([P]), [M] \rangle,
  \]
  where the first equality holds by the assumption that induction is conjugate right adjoint to restriction and the second equality holds by the assumption that the tower is dualizing.  Then~\eqref{eq:conjugate-coproduct-identity-classes} follows from the nondegeneracy of the bilinear form.

  The fact that~\eqref{eq:conjugate-coproduct-identity} and~\eqref{eq:conjugate-coproduct-identity-classes} are equivalent follows from the fact that every short exact sequence of projective modules splits.  Thus, for $P,Q \in A\pmd$, we have $[P] = [Q]$ in $\cK(A)$ if and only if $P \cong Q$.
\end{proof}

\begin{rem}
  It is crucial in~\eqref{eq:conjugate-coproduct-identity} that $P$ be a projective module.  The isomorphism does not hold, in general, for arbitrary modules, even if the tower is dualizing.  For instance, the tower of 0-Hecke algebras is dualizing (see Corollary~\ref{cor:Hecke-like-strong-dualizing}), but one can show that $\Psi^{\otimes 2} \Delta \Psi^{-1} \cong S_{12} \Delta$ (see Lemma~\ref{lem:Hecke-like-coprod-conjugation}).  Then~\eqref{eq:conjugate-coproduct-identity} corresponds to the fact that the comultiplication on $\NS$ is cocommutative.  However, the comultiplication on $\QS$ is not cocommutative and thus~\eqref{eq:conjugate-coproduct-identity} does not hold, in general, if $P$ is not projective.  We refer the reader to Section~\ref{sec:qsym-nsym} for further details on the tower of 0-Hecke algebras.
\end{rem}

\subsection{The Heisenberg double associated to a tower of algebras}

In this section we apply the constructions of Section~\ref{sec:general-construction} to the dual pair $(\cG(A),\cK(A))$ arising from a dualizing tower of algebras $A$.  We also see that some natural subalgebras of the Heisenberg double arise in this situation.

\begin{defin}[$\fh(A)$, $\cF(A)$, $\cG_\pj(A)$] \label{def:G-proj}
  Suppose $A$ is a dualizing tower of algebras.  We let $\fh(A) = \fh(\cG(A),\cK(A))$ and $\cF(A) = \cF(\cG(A),\cK(A))$.  For each $n \in \N$, $A_n\pmd$ is a full subcategory of $A_n\md$.  The inclusion functor induces the \emph{Cartan map} $\cK(A) \to \cG(A)$.  Let $\cG_\pj(A)$ denote the image of the Cartan map.
\end{defin}

For the remainder of this section, we fix a dualizing tower of algebras $A$ and let
\begin{equation} \label{eq:tower-notation}
  H^- = \cK(A),\ H^+ = \cG(A),\ H^+_\pj = \cG_\pj(A),\ \fh = \fh(A),\ \mathcal{F}=\mathcal{F}(A).
\end{equation}
To avoid confusion between $\cG(A)$ and $\cK(A)$, we will write $[M]_+$ to denote the class of a finitely generated (possibly projective) $A_n$-module in $H^+$ and $[M]_-$ to denote the class of a finitely generated projective $A_n$-module in $H^-$.  If $P \in A_p\pmd$ and $N \in (A_{n-p} \otimes A_p)\md$, then we have a natural $A_{n-p}$-module structure on $\Hom_{A_p}(P,N)$ given by
\begin{equation} \label{eq:hom-restriction-action}
  (a \cdot f)(b) = (a \otimes 1) \cdot (f(b)),\quad a \in A_{n-p},\ f \in \Hom_{A_p}(P,N),\ b \in P.
\end{equation}

\begin{lem} \label{lem:adjoint-action-explicit}
  If $p,n \in \N$, $P \in A_p\pmd$ and $N \in A_n\md$, then we have
  \[
    [P]_- \cdot [N]_+ =
    \begin{cases}
      0 & \text{if } p > n, \\
      [\Hom_{A_p}(P,\Res^{A_n}_{A_{n-p} \otimes A_p} N)]_+ & \text{if } p \le n.
    \end{cases}
  \]
  Here $\cdot$ denotes the action of $\fh$ on $\cF$ and $\Hom_{A_p}(P,\Res^{A_n}_{A_{n-p} \otimes A_p} N)$ is viewed as an $A_{n-p}$-module as in~\eqref{eq:hom-restriction-action}.
\end{lem}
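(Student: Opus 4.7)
The plan is to compute $[P]_- \cdot [N]_+$ using the explicit formula for the left-regular action from Lemma~\ref{lem:left-reg-action}, namely
\[
[P]_- \cdot [N]_+ \;=\; \pR{[P]_-}^*\!\bigl([N]_+\bigr) \;=\; \sum_{([N]_+)} \langle [P]_-, [N]_{+(2)} \rangle\, [N]_{+(1)}.
\]
Since $\Delta([N]_+) = \sum_{k+\ell = n} [\Res^{A_n}_{A_k \otimes A_\ell} N]$ and $[P]_- \in K_0(A_p)$ pairs to zero with any homogeneous element in $G_0(A_\ell)$ with $\ell \neq p$, only the summand $\ell = p$, $k = n-p$ can contribute. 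This immediately gives the vanishing when $p > n$, so I may assume $p \leq n$.

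For the $p \le n$ case, rather than wrestling with Sweedler decompositions of $[\Res^{A_n}_{A_{n-p} \otimes A_p} N]$ in $G_0(A_{n-p}) \otimes G_0(A_p)$, I would test both sides of the desired identity against an arbitrary class $[Q]_- \in K_0(A_{n-p})$ using the perfect pairing~\eqref{eq:tower-pairing}. On the left, adjointness of $\pR{[P]_-}$ and $\pR{[P]_-}^*$ together with the fact that the pairing is a Hopf pairing gives
\[
\langle [Q]_-,\, [P]_- \cdot [N]_+ \rangle \;=\; \langle [Q]_- [P]_-,\, [N]_+ \rangle \;=\; \langle [Q]_- \otimes [P]_-,\, \Delta([N]_+) \rangle,
\]
and by the grading argument above this reduces to
\[
\dim_\F \Hom_{A_{n-p} \otimes A_p}\!\bigl(Q \otimes P,\; \Res^{A_n}_{A_{n-p} \otimes A_p} N\bigr).
\]

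The heart of the argument is now the standard tensor--Hom adjunction
\[
\Hom_{A_{n-p} \otimes A_p}\!\bigl(Q \otimes_\F P,\; M\bigr) \;\cong\; \Hom_{A_{n-p}}\!\bigl(Q,\; \Hom_{A_p}(P, M)\bigr),
\]
valid for any $(A_{n-p} \otimes A_p)$-module $M$, and one checks that the $A_{n-p}$-action on the right is precisely the one specified in~\eqref{eq:hom-restriction-action}. Taking dimensions and applying this to $M = \Res^{A_n}_{A_{n-p} \otimes A_p} N$ yields
\[
\langle [Q]_-,\, [P]_- \cdot [N]_+ \rangle \;=\; \dim_\F \Hom_{A_{n-p}}\!\bigl(Q,\, \Hom_{A_p}(P, \Res^{A_n}_{A_{n-p} \otimes A_p} N)\bigr) \;=\; \langle [Q]_-,\, [\Hom_{A_p}(P, \Res^{A_n}_{A_{n-p} \otimes A_p} N)]_+ \rangle.
\]
Since this holds for all $[Q]_- \in K_0(A_{n-p})$ and the pairing is perfect by axiom~(TA4), the two classes agree in $G_0(A_{n-p})$.

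The only genuine subtlety is ensuring that $\Hom_{A_p}(P,-)$ is well defined on Grothendieck classes and that the $A_{n-p}$-structure in~\eqref{eq:hom-restriction-action} matches the one appearing on the right of the adjunction; both are immediate because $P \in A_p\pmd$ makes $\Hom_{A_p}(P,-)$ exact, and the adjunction is natural. I expect no serious obstacle beyond bookkeeping with the grading decomposition of $\Delta$.
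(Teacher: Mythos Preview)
Your proposal is correct and follows essentially the same approach as the paper: test against an arbitrary class $[Q]_- \in K_0(A_{n-p})$, use the Hopf pairing to pass from $\langle [Q]_-[P]_-, [N]_+ \rangle$ to $\langle [Q]_- \otimes [P]_-, \Delta([N]_+) \rangle$, apply the tensor--Hom adjunction, and conclude by nondegeneracy. The only difference is cosmetic---you begin by invoking Lemma~\ref{lem:left-reg-action} and spell out the grading argument for the vanishing when $p>n$, whereas the paper dispatches that case in one line and proceeds directly to the pairing computation.
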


\begin{proof}
  The case $p > n$ follows immediately from the fact that $H^+_{n-p}=0$ if $p>n$.  Assume $p \le n$.  For $R \in A_{n-p}\pmd$, we have
  \begin{align*}
    \langle [R]_-, [P]_- \cdot [N]_+ \rangle &= \langle [R]_- [P]_-, [N]_+ \rangle \\
    &= \langle \nabla([R]_- \otimes [P]_-), [N]_+ \rangle \\
    &= \langle [R]_- \otimes [P]_-, \Delta ([N]_+) \rangle \\
    &= \dim_\F\Hom_{A_{n-p} \otimes A_p} (R \otimes P, \Res^{A_n}_{A_{n-p} \otimes A_p} N) \\
    &= \dim_\F \Hom_{A_{n-p}} (R, \Hom_{A_p}(P,\Res^{A_n}_{A_{n-p} \otimes A_p} N)) \\
    &= \langle [R]_- ,[\Hom_{A_p}(P,\Res^{A_n}_{A_{n-p} \otimes A_p} N)]_+ \rangle.
  \end{align*}
  The result then follows from the nondegeneracy of the bilinear form.
\end{proof}

\begin{lem} \label{lem:hom-proj-proj}
  Suppose $\kk$ is a commutative ring and $R$ and $S$ are $\kk$-algebras.  Furthermore, suppose that $P$ is a finitely generated projective $S$-module and $Q$ is a finitely generated projective $(R \otimes S)$-module.  Then $\Hom_S (P,Q)$ is a finitely generated projective $R$-module.
\end{lem}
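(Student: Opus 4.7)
The plan is to reduce everything to two easy observations: (i) the functor $\Hom_S(P,-)$ is additive and converts finite direct-sum decompositions in its second argument into direct-sum decompositions (and similarly for the first argument, since $P$ is finitely generated); and (ii) $R\otimes S$ is a finitely generated projective left $R$-module in the setting of interest (where $\kk=\F$ is a field and $S=A_p$ is finite dimensional, so $S$ is free of finite rank over $\kk$ and hence $R\otimes S\cong R^{\dim_\F S}$ as a left $R$-module). Throughout, the $R$-module structure on any $\Hom_S$ group comes from the left $R$-action $(r\cdot f)(p) = (r\otimes 1)\cdot f(p)$, which is well defined because the $R$- and $S$-actions on an $R\otimes S$-module commute.

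First, I would use the projectivity of $Q$ over $R\otimes S$ to choose a finitely generated $(R\otimes S)$-module $Q'$ and an integer $n\geq 0$ with $Q\oplus Q'\cong (R\otimes S)^n$ as left $R\otimes S$-modules. Applying $\Hom_S(P,-)$ and using additivity, I conclude that $\Hom_S(P,Q)$ is a direct summand, as a left $R$-module, of $\Hom_S(P,(R\otimes S)^n)\cong \Hom_S(P,R\otimes S)^n$. Next, I would use the projectivity of $P$ over $S$ to pick $P'$ and $m\geq 0$ with $P\oplus P'\cong S^m$, so that (again by additivity of $\Hom$ in its first argument) $\Hom_S(P,R\otimes S)$ is a direct summand, as a left $R$-module, of $\Hom_S(S^m,R\otimes S)\cong (R\otimes S)^m$. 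Composing the two reductions, $\Hom_S(P,Q)$ is an $R$-module direct summand of $(R\otimes S)^{mn}$.

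Since $R\otimes S$ is finitely generated projective over $R$ by observation (ii), so is $(R\otimes S)^{mn}$, and hence so is any $R$-module direct summand of it, in particular $\Hom_S(P,Q)$. The only thing that requires attention—and therefore the main (minor) obstacle—is verifying that each of the invoked isomorphisms is genuinely $R$-equivariant: this amounts to checking that the evaluation map $\Hom_S(S,M)\xrightarrow{\sim} M$, $f\mapsto f(1)$, commutes with the left $R$-action whenever $M$ carries commuting left $R$- and $S$-actions, and that distributivity of $\Hom_S$ over direct sums respects $R$-actions. Both checks are immediate from the definitions.
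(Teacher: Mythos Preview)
Your approach is essentially the same as the paper's: both arguments choose complements $P'$ and $Q'$ so that $P\oplus P'\cong S^m$ and $Q\oplus Q'\cong (R\otimes S)^n$, and then use additivity of $\Hom_S$ to exhibit $\Hom_S(P,Q)$ as a summand of $\Hom_S(S^m,(R\otimes S)^n)$.

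Where you differ is at the final step, and there you are more careful than the paper. The paper writes $\Hom_S(S,R\otimes S)\cong R$, which is not correct: the evaluation isomorphism gives $\Hom_S(S,R\otimes S)\cong R\otimes S$ as a left $R$-module. You instead invoke the extra hypothesis that $S$ is free of finite rank over $\kk$ (true in the intended application, where $\kk=\F$ and $S=A_p$ is finite dimensional), so that $R\otimes S\cong R^{\dim_\F S}$ is finitely generated free over $R$. This extra hypothesis is genuinely needed: as stated (with $\kk$ an arbitrary commutative ring and $R,S$ arbitrary $\kk$-algebras), the lemma is false. For instance, take $\kk=R=\Z$, $S=\Z[x]$, $P=S$, $Q=R\otimes S$; then $\Hom_S(P,Q)\cong\Z[x]$ is not finitely generated over $R=\Z$. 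So your observation (ii) is not a minor technicality but a necessary correction, and your proof is the right one for the application.
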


\begin{proof}
  If $P$ is a projective $S$-module and $Q$ is a projective $(R \otimes S)$-module then there exist $s,t \in \N$, an $S$-module $P'$ and an $(R \otimes S)$-module $Q'$ such that $P \oplus P' \cong S^s$ and $Q \oplus Q' \cong (R \otimes S)^t$.  Then we have
  \begin{multline*}
    \Hom_S(P,Q) \oplus \Hom_S(P,Q') \oplus \Hom_S(P',(R \otimes S)^t) \\
    \cong \Hom_S (S^s,(R \otimes S)^t) \cong \Hom_S(S,R \otimes S)^{st} \cong R^{st}.
  \end{multline*}
  Thus $\Hom_S(P,Q)$ is a projective $R$-module.
\end{proof}

\begin{prop} \label{prop:Hpj-invariance}
  We have that $H^+_\pj$ is a subalgebra of $H^+$ that is invariant under the left-regular action of $H^-$.
\end{prop}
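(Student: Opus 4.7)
The plan is to verify the two assertions, namely that $H^+_\pj$ is closed under multiplication and under the left-regular action of $H^-$, by reducing each to a statement about projectivity of a concretely described module, and then applying the appropriate projectivity lemmas.

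First I would handle the subalgebra statement. Since $H^+_\pj$ is spanned as a $\Z$-module by classes of the form $[P]_+$ with $P$ a finitely generated projective $A_m$-module, it suffices to show that for $P \in A_m\pmd$ and $Q \in A_n\pmd$ the product $[P]_+[Q]_+ = [\Ind^{A_{m+n}}_{A_m \otimes A_n}(P \otimes Q)]_+$ lies in $H^+_\pj$. The module $P \otimes Q$ is projective over $A_m \otimes A_n$, and by axiom (TA3) the algebra $A_{m+n}$ is projective as a right $(A_m \otimes A_n)$-module; hence the induced module $A_{m+n} \otimes_{A_m \otimes A_n}(P \otimes Q)$ is a projective $A_{m+n}$-module, proving closure.

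Next I would prove invariance under the left-regular action. By linearity it suffices to show that $[P]_- \cdot [Q]_+$ lies in $H^+_\pj$ for $P \in A_p\pmd$ and $Q \in A_m\pmd$. Lemma~\ref{lem:adjoint-action-explicit} gives this class as $0$ (when $p > m$) or as $[\Hom_{A_p}(P, \Res^{A_m}_{A_{m-p} \otimes A_p} Q)]_+$ (when $p \le m$), with the $A_{m-p}$-action specified in~\eqref{eq:hom-restriction-action}. The key claim is that this Hom is a projective $A_{m-p}$-module. To see this, I would first note that $\Res^{A_m}_{A_{m-p} \otimes A_p} Q$ is a projective $(A_{m-p} \otimes A_p)$-module: indeed $Q$ is a direct summand of some $A_m^k$, and (TA3) guarantees that $A_m$ is projective as a left $(A_{m-p} \otimes A_p)$-module, so the restriction of $A_m^k$, and hence of $Q$, is projective. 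Lemma~\ref{lem:hom-proj-proj}, applied with $R = A_{m-p}$ and $S = A_p$, then yields that $\Hom_{A_p}(P, \Res^{A_m}_{A_{m-p} \otimes A_p} Q)$ is a finitely generated projective $A_{m-p}$-module, so its class lies in $H^+_\pj$.

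There is no serious obstacle: the proof is essentially bookkeeping once the explicit formula of Lemma~\ref{lem:adjoint-action-explicit} and the projectivity afforded by (TA3) are in place. The only point that requires minor care is matching the natural $A_{m-p}$-action on the Hom space given in~\eqref{eq:hom-restriction-action} with the $R$-action used in Lemma~\ref{lem:hom-proj-proj}, and verifying that (TA3)'s ``two-sided projective'' gives projectivity on the side actually needed in each step (right-projectivity for induction, left-projectivity for restriction).
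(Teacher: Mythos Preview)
Your argument is correct and mirrors the paper's proof exactly: both reduce the invariance claim via Lemma~\ref{lem:adjoint-action-explicit} to showing that $\Hom_{A_p}(P,\Res Q)$ is projective, establish projectivity of $\Res Q$ from (TA3), and finish with Lemma~\ref{lem:hom-proj-proj}. The only cosmetic difference is that the paper outsources the projectivity of induced and restricted modules to \cite[Prop.~3.2]{BL09} while you spell these out directly; note incidentally that (TA3) is only genuinely needed for the restriction step, since induction preserves projectives automatically as a left adjoint to an exact functor.
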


\begin{proof}
  Assume $P \in A_p\pmd$ and $Q \in A_q\pmd$.  As in the proof of \cite[Prop.~3.2]{BL09}, we have that $\Ind^{A_{p+q}}_{A_p \otimes A_q} (P \otimes Q)$ is a projective $A_{p+q}$-module.  It follows that $H^+_\pj$ is a subalgebra of $H^+$.

  By Lemma~\ref{lem:adjoint-action-explicit}, it remains to show that
  \[
    \Hom_{A_p}(P, \Res^{A_q}_{A_{q-p} \otimes A_p} Q) \in A_{q-p}\pmd.
  \]
  Again, as in the proof of \cite[Prop.~3.2]{BL09}, we have that $\Res^{A_q}_{A_{q-p} \otimes A_p} Q$ is a projective $(A_{q-p} \otimes A_p)$-module.  The result then follows from Lemma~\ref{lem:hom-proj-proj}.
\end{proof}

\begin{defin}[The projective Heisenberg double ${\mathfrak{h}_\pj}(A)$] \label{def:p}
  By Proposition~\ref{prop:Hpj-invariance}, ${\mathfrak{h}_\pj}={\mathfrak{h}_\pj}(A) := H^+_\pj \# H^-$ is a subalgebra of $\fh$.  In other words, ${\mathfrak{h}_\pj}$ is the subalgebra of $\fh$ generated by $H^+_\pj$  and $H^-$ (viewing the latter two as $\Z$-submodules of $\fh$ as in Definition~\ref{def:h}).  We call ${\mathfrak{h}_\pj}$ the \emph{projective Heisenberg double} associated to $A$.
\end{defin}

\begin{defin}[Fock space $\cF_\pj(A)$ of ${\mathfrak{h}_\pj}$] \label{def:p-Fock-space}
  By Proposition~\ref{prop:Hpj-invariance}, the algebra ${\mathfrak{h}_\pj}$ acts on $H^+_\pj$.  We call this the \emph{lowest weight Fock space representation} of ${\mathfrak{h}_\pj}$ and denote it by $\cF_\pj = \cF_\pj(A)$.  Note that this representation is generated by the lowest weight vacuum vector $1 \in H^+_\pj$.
\end{defin}

\begin{prop} \label{prop:p-Fock-space-properties}
  The Fock space $\cF_\pj$ of ${\mathfrak{h}_\pj}$ has the following properties.
  \begin{enumerate}
    \item \label{prop-item:p-Fock-space-subreps} The only submodules of $\cF_\pj$ are those submodules of the form $n \cF_\pj$ for $n \in \Z$.

    \item Let $\Z^- \cong \Z$ (isomorphism of $\Z$-modules) be the representation of $H^-$ such that $H^-_n$ acts as zero for all $n > 0$ and $H^-_0 \cong \Z$ acts by left multiplication.  Then $\cF_\pj$ is isomorphic to the induced module $\Ind^{\mathfrak{h}_\pj}_{H^-} \Z^-$ as an ${\mathfrak{h}_\pj}$-module.

    \item \label{prop-item:p-Stone-von-Neumann} Any representation of ${\mathfrak{h}_\pj}$ generated by a lowest weight vacuum vector is isomorphic to $\cF_\pj$.
  \end{enumerate}
\end{prop}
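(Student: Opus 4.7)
The plan is to observe that Proposition~\ref{prop:p-Fock-space-properties} is a direct specialization of Theorem~\ref{theo:Fock-space-properties} to the setting $\kk = \Z$, $H^+ = \cG(A)$, $H^- = \cK(A)$, and $X^+ = H^+_\pj = \cG_\pj(A)$. The hypothesis of Theorem~\ref{theo:Fock-space-properties} demands that $X^+$ be a subalgebra of $H^+$ stable under the left-regular action of $H^-$, and this is exactly the content of Proposition~\ref{prop:Hpj-invariance}. Moreover, by Definition~\ref{def:p} we have ${\mathfrak{h}_\pj}(A) = H^+_\pj \# H^-$, so the algebra $X^+ \# H^-$ appearing in Theorem~\ref{theo:Fock-space-properties} is precisely ${\mathfrak{h}_\pj}$, and the Fock space there coincides with $\cF_\pj$.

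With this identification in hand, part~(a) follows from Theorem~\ref{theo:Fock-space-properties}(a) upon noting that every ideal of $\Z$ is of the form $n\Z$ for some $n \in \Z$, so the submodules $I X^+$ listed there are precisely the submodules $n \cF_\pj$. Part~(b) is an immediate translation of Theorem~\ref{theo:Fock-space-properties}(b), since $\Z^-$ here plays the role of $\kk^-$ there. Part~(c) is the corresponding translation of Theorem~\ref{theo:Fock-space-properties}(c), the generalized Stone--von Neumann statement.

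There is essentially no obstacle: all the work has already been done, first in establishing the abstract Stone--von Neumann theorem (Theorem~\ref{theo:Fock-space-properties}) and then in verifying $H^-$-invariance of $H^+_\pj$ (Proposition~\ref{prop:Hpj-invariance}). The proof I would write therefore consists of a single short paragraph invoking these two results and reading off the three conclusions. One minor point worth mentioning explicitly is that faithfulness (part~(d) of Theorem~\ref{theo:Fock-space-properties}) is not claimed here, which is natural since the argument for faithfulness used a dual basis of \emph{all} of $H^+_n$ rather than just $H^+_{\pj,n}$, and that argument does not transfer without further hypotheses.
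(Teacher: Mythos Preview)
Your proposal is correct and matches the paper's own proof essentially verbatim: the paper's proof is the single sentence ``This is an immediate consequence of Theorem~\ref{theo:Fock-space-properties}, taking $X^+ = H^+_\pj$.'' Your additional remarks (that the ideals of $\Z$ are exactly $n\Z$, that Proposition~\ref{prop:Hpj-invariance} supplies the invariance hypothesis, and why faithfulness is not asserted) are helpful elaborations but not required.
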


\begin{proof}
  This is an immediate consequence of Theorem~\ref{theo:Fock-space-properties}, taking $X^+ = H^+_\pj$.
\end{proof}

Note that the lowest weight Fock space $\cF_\pj$ is not a faithful ${\mathfrak{h}_\pj}$-module in general (see Section~\ref{subsec:quasi-Fock-spaces}), in contrast to the case for $\fh$ (see Theorem~\ref{theo:Fock-space-properties}\eqref{theo-item:Fock-space-faithful}).  However, we can define a highest weight Fock space of ${\mathfrak{h}_\pj}$ that is faithful.  Consider the augmentation algebra homomorphism $\epsilon^+ \colon H^+_\pj \to \Z$ uniquely determined by $\epsilon^+(H^+_n \cap H^+_\pj) = 0$ for $n > 0$.  Let $\Z_\epsilon^+$ denote the corresponding $H^+_\pj$-module.  We call the induced module $\cF_\pj^- := \Ind^{\mathfrak{h}_\pj}_{H^+_\pj} \Z_\epsilon^+$ the \emph{highest weight Fock space representation} of ${\mathfrak{h}_\pj}$.  It is generated by the highest weight vacuum vector $1 \in \Z_\epsilon^+$.

\begin{prop} \label{prop:hw-p-FS-faithful}
  The highest weight Fock space representation $\cF_\pj^-$ of ${\mathfrak{h}_\pj}$ is faithful.
\end{prop}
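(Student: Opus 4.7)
The plan is to mimic the proof of Theorem~\ref{theo:Fock-space-properties}\eqref{theo-item:Fock-space-faithful}, but working with an ``opposite'' PBW-style decomposition of ${\mathfrak{h}_\pj}$ and filtering by $H^+_\pj$-degree from below rather than by $H^-$-degree from below.  First I would verify that the elements $\{(1 \# y)(b \# 1)\}$, with $y$ ranging over a basis of $H^-$ and $b$ over a basis of $H^+_\pj$, form a $\kk$-basis of ${\mathfrak{h}_\pj}$; this follows from the triangular identity $(1 \# y)(b \# 1) = b \# y + (\text{strictly lower } H^-\text{-degree})$, which is immediate from the multiplication formula in Definition~\ref{def:h}.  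Via this basis one sees that ${\mathfrak{h}_\pj} \cong H^- \otimes H^+_\pj$ as a right $H^+_\pj$-module (acting on the second tensor factor), so $\cF_\pj^- \cong H^-$ as a $\kk$-module via $(1 \# y) \otimes 1 \leftrightarrow y$; under this identification, $H^- \subseteq {\mathfrak{h}_\pj}$ acts on $H^-$ by left multiplication.

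Next I would establish a formula for the action of $H^+_\pj$.  The $\Z$-grading of $\fh$ forces $b \cdot z = 0$ whenever $b \in H^+_{\pj, n}$ and $z \in H^-_m$ with $m < n$.  The key step is to show that for $m = n > 0$,
\[ b \cdot z \;=\; \langle S^{-1}(z),\, b \rangle \;\in\; H^-_0 = \kk, \]
where $S$ is the antipode of $H^-$.  This comes from pushing the relation $(1 \# z)(b \# 1) = \sum_{(z)} \pR{z_{(1)}}^*(b) \# z_{(2)}$ into $\cF_\pj^-$; since $\varepsilon(b) = 0$ the left-hand side vanishes, yielding $\sum_{(z)} \pR{z_{(1)}}^*(b) \cdot z_{(2)} = 0$.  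Separating this sum according to whether $\deg z_{(1)}$ equals $0$, equals $n$, or lies strictly between, and using the adjointness $\langle w, \pR{x}^*(a) \rangle = \langle wx, a \rangle$, gives a recursion on $n$ which one resolves using the identity $\sum_{(z)} S^{-1}(z_{(2)}) z_{(1)} = \varepsilon(z)$ (obtained from the standard antipode axiom by applying the anti-homomorphism $S^{-1}$).

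With the formula in place, faithfulness follows in the same pattern as Theorem~\ref{theo:Fock-space-properties}\eqref{theo-item:Fock-space-faithful}.  Given nonzero $\alpha = \sum_i (1 \# y_i)(b_i \# 1) \in {\mathfrak{h}_\pj}$ in the PBW basis (with $\{y_i\}$ a basis of $H^-$), let $n \ge 0$ be the minimal degree at which some component $b_i^{(n)} \in H^+_{\pj,n}$ of some $b_i$ is nonzero.  For any $z \in H^-_n$, only the degree-$n$ component of each $b_i$ contributes -- higher-degree components vanish by grading, and lower-degree components vanish by the minimality of $n$ -- so
\[ \alpha \cdot z \;=\; \sum_i \langle S^{-1}(z),\, b_i^{(n)} \rangle\, y_i. \]
If this vanished for every $z \in H^-_n$, then linear independence of $\{y_i\}$ together with the fact that $S^{-1}$ restricts to a bijection on $H^-_n$ (the antipode of a graded connected Hopf algebra is invertible and degree-preserving) would, via the perfect pairing $H^-_n \times H^+_n \to \kk$, force each $b_i^{(n)} = 0$, contradicting the choice of $n$.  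The main obstacle is the action formula in the second paragraph -- in particular pinning down $S^{-1}$ (not $S$), a distinction that is genuine whenever $H^-$ fails to be cocommutative.
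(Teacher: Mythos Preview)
Your argument is correct, and the action formula $b \cdot z = \langle S^{-1}(z), b \rangle$ for $b \in H^+_{\pj,n}$, $z \in H^-_n$, together with the induction that establishes it, is a nice explicit computation.  The faithfulness argument at the end goes through as written: the only point to be careful about is that $b_i^{(n)}$ lies in $H^+_{\pj,n} \subseteq H^+_n$, so nondegeneracy of the pairing $H^-_n \times H^+_n \to \kk$ on the $H^+$-side (which follows from perfection) is enough to force $b_i^{(n)}=0$.

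The paper's proof, however, is considerably shorter and more conceptual.  Rather than computing the action of $H^+_\pj$ on $H^-$ explicitly, it observes that the ${\mathfrak{h}_\pj}$-module $\cF_\pj^-$ is nothing but the restriction to ${\mathfrak{h}_\pj} \subseteq \fh$ of the \emph{highest weight} Fock space of the full Heisenberg double $\fh$ (i.e.\ the natural $\fh$-action on $H^-$ obtained by interchanging the roles of $H^+$ and $H^-$).  That $\fh$-action is faithful by the obvious highest weight analogue of Theorem~\ref{theo:Fock-space-properties}\eqref{theo-item:Fock-space-faithful}, and restricting a faithful representation to a subalgebra remains faithful.  Thus the paper bypasses the explicit action formula and the antipode computation entirely.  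What your approach buys is an explicit description of the $H^+_\pj$-action on $\cF_\pj^-$, which the paper never writes down; what the paper's approach buys is a two-line proof that avoids the recursion and the $S$ versus $S^{-1}$ bookkeeping.
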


\begin{proof}
  We have ${\mathfrak{h}_\pj} \cong H^+_\pj \otimes H^-$ as $\kk$-modules and so $\cF_\pj^- \cong H^-$ as $\kk$-modules.  The action of ${\mathfrak{h}_\pj}$ on $\cF_\pj^-$ is simply the restriction of the natural action of $\fh$ on $H^-$, which is faithful by (an obvious highest weight analogue of) Theorem~\ref{theo:Fock-space-properties}\eqref{theo-item:Fock-space-faithful}.
\end{proof}

\subsection{Categorification of Fock space}

In this section we prove our main result, the categorification of the Fock space representation of the Heisenberg double.  We continue to fix a dualizing tower of algebras $A$ and to use the notation of~\eqref{eq:tower-notation}.

Recall the direct sums of categories
\[ \ts
  A\md = \bigoplus_{n \in \N} A_n\md,\quad A\pmd = \bigoplus_{n \in \N} A_n\pmd.
\]
For each $M \in A_m\md$, $m \in \N$, define the functor $\Ind_M \colon A\md \to A\md$ by
\[
  \Ind_M(N) = \Ind^{A_{m+n}}_{A_m \otimes A_n} (M \otimes N) \in A_{m+n}\md,\quad N \in A_n\md,\ n \in \N.
\]
For each $P \in A_p\pmd$, $p \in \N$, define the functor $\Res_P \colon A\md \to A\md$ by
\[
  \Res_P(N) = \Hom_{A_p}(P, \Res^{A_n}_{A_{n-p} \otimes A_p} N) \in A_{n-p}\md,\quad N \in A_n\md,\ n \in \N,
\]
where $\Res_P(N)$ is interpreted to be the zero object of $A\md$ if $n-p<0$.

As in the proof of Proposition~\ref{prop:Hpj-invariance}, we see that
\[
  \Ind_P(A\pmd) \subseteq A\pmd, \quad \Res_P(A\pmd) \subseteq A\pmd \quad \text{for all } P \in A\pmd.
\]
Thus we have the induced functors $\Ind_P, \Res_P \colon A\pmd \to A\pmd$ for $P \in A\pmd$.

Since the functors $\Ind_M$ and $\Res_P$ are exact for all $M \in A\md$ and $P \in A\pmd$, they induce endomorphisms $[\Ind_M]$ and $[\Res_P]$ of $\cG(A)$.  Similarly, $\Ind_P$ and $\Res_P$ induce endomorphisms $[\Ind_P]$ and $[\Res_P]$ of $\cG_\pj(A)$ for all $P \in A\pmd$.

\begin{prop} \label{prop:cat-Groth-group}
  Suppose $A$ is a dualizing tower of algebras.
  \begin{enumerate}
    \item \label{theo-item:weak-cat} For all $M,N \in A\md$ and $P \in A\pmd$, we have
        \[
          ([M] \# [P])([N]) = [\Ind_M] \circ [\Res_P] ([N]) = [\Ind_M \circ \Res_P (N)] \in \cG(A).
        \]

    \item \label{theo-item:weak-cat-proj} For all $Q,P,R \in A\pmd$, we have
        \[
          ([Q] \# [P])([R]) = [\Ind_Q] \circ [\Res_P] ([R]) = [\Ind_Q \circ \Res_P (R)] \in \cG_\pj(A).
        \]
  \end{enumerate}
\end{prop}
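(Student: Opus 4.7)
The plan is to unwind the definitions on the two sides of each asserted equality and show they coincide termwise by appealing to (i) the description of the product in $\cG(A)$ via the induction functor $\nabla$ in~\eqref{eq:Groth-operations}, (ii) the description of the left-regular action of $H^-$ on $H^+ = \cG(A)$ provided by Lemma~\ref{lem:adjoint-action-explicit}, and (iii) the fact that the $\fh$-action on $\cF$ factors as $(a\#x)(b) = a \cdot \pR{x}^*(b)$ from Definition~\ref{def:h} (or rather its specialization in Definition of the Fock space representation).

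For part (a), I would first apply the defining formula for the Fock space action of $\fh$ to write
\[
  ([M]\#[P])([N]) = [M]_+ \cdot \pR{[P]_-}^*([N]_+).
\]
Then Lemma~\ref{lem:adjoint-action-explicit} identifies the right-hand factor as the class
\[
  \pR{[P]_-}^*([N]_+) = [\Hom_{A_p}(P,\Res^{A_n}_{A_{n-p}\otimes A_p} N)]_+ = [\Res_P(N)]_+,
\]
with both sides understood to be zero when $p>n$, matching the convention on $\Res_P$. Finally, the product in $\cG(A)$ was defined via $\nabla = \Ind$ on module categories in~\eqref{eq:Groth-operations}, so for any $K \in A_k\md$ we have $[M]_+ \cdot [K]_+ = [\Ind^{A_{m+k}}_{A_m\otimes A_k}(M\otimes K)]_+ = [\Ind_M(K)]_+$. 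Applying this with $K = \Res_P(N)$ gives
\[
  ([M]\#[P])([N]) = [\Ind_M(\Res_P(N))]_+ = [\Ind_M \circ \Res_P(N)],
\]
which is also equal to $[\Ind_M]\circ [\Res_P]([N])$ because $\Ind_M$ and $\Res_P$ are exact (as noted just before the proposition) and hence descend to well-defined endomorphisms of $\cG(A)$.

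For part (b), the same chain of equalities holds verbatim with $M$ replaced by $Q$ and $N$ replaced by $R$; the only additional point to check is that every intermediate class lives in $\cG_\pj(A)$. This is precisely the content of Proposition~\ref{prop:Hpj-invariance}, whose proof showed both that $\Ind_Q$ preserves $A\pmd$ and that $\Res_P(R) = \Hom_{A_p}(P,\Res^{A_r}_{A_{r-p}\otimes A_p} R)$ is projective (via Lemma~\ref{lem:hom-proj-proj} applied to the projective restriction of $R$). Thus the action of ${\mathfrak{h}_\pj}$ on $\cF_\pj$ is categorified by exactly the same pair of functors restricted to $A\pmd$.

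The argument is essentially a translation exercise between the algebraic definition of $\fh$ and the categorical operations $\Ind_M$, $\Res_P$, so no serious obstacle is expected; the only subtle point is lining up the $A_{n-p}$-module structure on $\Hom_{A_p}(P,\Res N)$ in~\eqref{eq:hom-restriction-action} with the structure implicit in Lemma~\ref{lem:adjoint-action-explicit}, but this is already built into that lemma.
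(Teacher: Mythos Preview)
Your proof is correct and follows exactly the approach of the paper, which simply states that the result follows from the definition of multiplication in $\cG(A)$ and Lemma~\ref{lem:adjoint-action-explicit}. You have spelled out in detail what the paper leaves as a one-line reference, and your treatment of part~(b) via Proposition~\ref{prop:Hpj-invariance} is also the intended justification.
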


\begin{proof}
  This follows from the definition of the multiplication in $\cG(A)$ and Lemma~\ref{lem:adjoint-action-explicit}.
\end{proof}

Part~\eqref{theo-item:weak-cat} (resp.\ part~\eqref{theo-item:weak-cat-proj}) of Proposition~\ref{prop:cat-Groth-group}
shows how the action of $\fh$ on $\cF$ (resp. $\fh_\pj$ on $\cF_\pj$) is induced by functors on $\bigoplus_{n\geq0} A_n\md$ (resp. $\bigoplus_{n\geq0}A_n\pmd$).  Typically a \emph{categorification} of a representation consists of isomorphisms of such functors which lift the algebra relations.  As we now describe, this can be done if the tower of algebras is strong.

First note that the algebra structure on $\fh$ is uniquely determined by the fact that $H^\pm$ are subalgebras and by the relation
\begin{equation} \label{eq:fh-key-relation} \ts
  xa = \sum_{(x)} \pR{x_{(1)}}^*(a) x_{(2)},\quad x \in H^-,\ a \in H^+,
\end{equation}
between $H^-$ and $H^+$.  Since the natural action of $\fh$ on $H^+$ is faithful by Theorem~\ref{theo:Fock-space-properties}\eqref{theo-item:Fock-space-faithful}, equation~\eqref{eq:fh-key-relation} is equivalent to the equalities in $\End H^+$
\begin{equation} \label{eq:fh-key-relation-action}
  \pR{x}^* \circ \pL{a} = \nabla \left( \pR{\Delta(x)}^* (a \otimes -)\right),\quad x \in H^-,\ a \in H^+.
\end{equation}

For $Q \in (A_p \otimes A_q)\pmd$, $M \in A_m\md$, and $N \in A_n\md$, define
\[
  \Res_Q(M \otimes N) := \Hom_{A_p \otimes A_q} \left( Q, S_{23} \left( \Res^{A_m}_{A_{m-p} \otimes A_p} M \otimes \Res^{A_n}_{A_{n-q} \otimes A_q} N \right) \right).
\]
For $Q \in A\md^{\otimes 2}$, we let $\Res_Q$ denote the corresponding sum of functors.

\begin{theo} \label{theo:categorification}
  Suppose that $A$ is a strong tower of algebras.  Then we have the following isomorphisms of functors for all $M,N \in A\md$ and $P,Q \in A\pmd$.
  \begin{gather}
    \Ind_M \circ \Ind_N \cong \Ind_{\nabla(M \otimes N)}, \label{cat-eq:ind} \\
    \Res_P \circ \Res_Q \cong \Res_{\nabla(P \otimes Q)}, \label{cat-eq:res} \\ \ts
    \Res_P \circ \Ind_M \cong \nabla \Res_{\Psi^{\otimes 2} \Delta \Psi^{-1} (P)}(M \otimes -). \label{cat-eq:cross}
  \end{gather}
  In particular, if $A$ is dualizing, then the above yields a categorification of the lowest weight Fock space representations of $\fh(A)$ and $\fh_\pj(A)$.
\end{theo}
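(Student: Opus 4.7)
The strategy is to prove the three functor isomorphisms \eqref{cat-eq:ind}--\eqref{cat-eq:cross} and then observe that, upon decategorification, they reproduce exactly the defining relations of the $\fh(A)$-action on $\cF(A)$ verified at the level of classes in Proposition~\ref{prop:cat-Groth-group}. The first two are categorified transitivity statements. For \eqref{cat-eq:ind}, both $\Ind_M \circ \Ind_N(L)$ and $\Ind_{\nabla(M \otimes N)}(L)$ are canonically isomorphic to $\Ind^{A_{m+n+\ell}}_{A_m \otimes A_n \otimes A_\ell}(M \otimes N \otimes L)$ via the two factorizations of $A_m \otimes A_n \otimes A_\ell \hookrightarrow A_{m+n+\ell}$. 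For \eqref{cat-eq:res}, starting from $\Res_{\nabla(P \otimes Q)}(N) = \Hom_{A_{p+q}}(\Ind^{A_{p+q}}_{A_p \otimes A_q}(P \otimes Q), \Res^{A_n}_{A_{n-p-q}\otimes A_{p+q}} N)$, I would strip the induction via the standard Ind--Res adjunction, invoke $\Hom_{A_p \otimes A_q}(P \otimes Q,-) \cong \Hom_{A_p}(P,\Hom_{A_q}(Q,-))$ (valid since $P \otimes Q$ is an external tensor of projectives), and finally apply transitivity of restriction through $A_{n-q} \otimes A_q$ to land at $\Res_P \circ \Res_Q(N)$.

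The main obstacle is \eqref{cat-eq:cross}, which requires both halves of Definition~\ref{def:strong}. Starting from $\Res_P \circ \Ind_M(N) = \Hom_{A_p}(P, \Res^{A_{m+n}}_{A_{m+n-p}\otimes A_p}\Ind^{A_{m+n}}_{A_m\otimes A_n}(M \otimes N))$, apply the Mackey isomorphism $\Delta\nabla \cong \nabla^{\otimes 2} S_{23}\Delta^{\otimes 2}$ to decompose the inner restriction-of-induction as a direct sum over $j+l=p$ of $(A_{m+n-p}\otimes A_p)$-modules of the shape
\[ \ts
  \Ind^{A_{m+n-p} \otimes A_p}_{A_{m-j}\otimes A_{n-l}\otimes A_j \otimes A_l}\bigl(S_{23}(M^{(m-j,j)} \otimes N^{(n-l,l)})\bigr),
\]
writing $M^{(i,j)} := \Res^{A_m}_{A_i \otimes A_j}M$ and analogously for $N$. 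This induction factors as $\Ind^{A_{m+n-p}}_{A_{m-j}\otimes A_{n-l}}$ on the outside of $\Ind^{A_p}_{A_j \otimes A_l}$ on the $A_p$-factor; since $P$ is finitely generated projective over $A_p$, $\Hom_{A_p}(P,-)$ commutes past the outer induction, leaving $\Ind^{A_{m+n-p}}_{A_{m-j}\otimes A_{n-l}}\Hom_{A_p}(P, \Ind^{A_p}_{A_j \otimes A_l}(-))$ on each summand. The conjugate adjointness of Definition~\ref{def:conjugate-adjoint}, applied to the subalgebra $A_j \otimes A_l \hookrightarrow A_p$, then gives
\[
  \Hom_{A_p}\bigl(P, \Ind^{A_p}_{A_j \otimes A_l}(W)\bigr) \cong \Hom_{A_j \otimes A_l}\bigl((\Psi^{\otimes 2}\Delta\Psi^{-1}(P))_{(j,l)}, W\bigr),
\]
and unwinding the definition of $\Res_Q$ on $M \otimes N$ given just before the theorem identifies each summand with $\Ind^{A_{m+n-p}}_{A_{m-j}\otimes A_{n-l}}\Res_{(\Psi^{\otimes 2}\Delta\Psi^{-1}(P))_{(j,l)}}(M \otimes N)$; summing over $j+l=p$ recovers the right-hand side of \eqref{cat-eq:cross}. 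The fiddly bookkeeping step, which I expect will consume the most care, is tracking $(A_{m-j}\otimes A_{n-l})$-equivariance through $S_{23}$ and the Mackey decomposition and verifying that $\Hom_{A_p}(P,-)$ genuinely commutes with the outer induction in the equivariant sense.

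Once \eqref{cat-eq:ind}--\eqref{cat-eq:cross} hold at the level of exact functors, each descends to an identity of endomorphisms of $\cG(A)$, and the restrictions to $A\pmd$ descend similarly to $\cG_\pj(A)$ by Proposition~\ref{prop:Hpj-invariance}. When $A$ is dualizing, Proposition~\ref{prop:cat-Groth-group} identifies these endomorphisms with the actions of $[M]_+$ and $[P]_-$ on the Fock space, so \eqref{cat-eq:ind} and \eqref{cat-eq:res} categorify the subalgebra relations in $\cG(A)$ and $\cK(A)$ inside $\fh(A)$, while \eqref{cat-eq:cross} categorifies the cross relation \eqref{eq:fh-key-relation-action}, using that Proposition~\ref{prop:dualizing-conjugation-isom} makes $\Psi^{\otimes 2}\Delta\Psi^{-1}$ coincide with $\Delta$ on $\cK(A)$. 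This is precisely the asserted categorification of the Fock space representations of $\fh(A)$ and $\fh_\pj(A)$.
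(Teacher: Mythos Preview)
Your proposal is correct and follows essentially the same route as the paper's proof: transitivity of induction for \eqref{cat-eq:ind}; Frobenius reciprocity plus the tensor--Hom identity $\Hom_{A_p\otimes A_q}(P\otimes Q,-)\cong\Hom_{A_p}(P,\Hom_{A_q}(Q,-))$ and transitivity of restriction for \eqref{cat-eq:res}; and for \eqref{cat-eq:cross} the Mackey decomposition, factoring the induction, commuting $\Hom_{A_p}(P,-)$ past the outer induction via projectivity of $P$, and then conjugate adjointness. The only cosmetic difference is that for \eqref{cat-eq:res} you run the chain from $\Res_{\nabla(P\otimes Q)}$ back to $\Res_P\circ\Res_Q$, whereas the paper runs it the other way; the steps are identical.
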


The isomorphisms~\eqref{cat-eq:ind} and~\eqref{cat-eq:res} categorify the multiplication in $\cG(A)$ and $\cK(A)$ respectively.  If $A$ is dualizing, then, in light of Proposition~\ref{prop:dualizing-conjugation-isom}, the isomorphism~\eqref{cat-eq:cross} categorifies the relation~\eqref{eq:fh-key-relation-action}.   Thus, Theorem~\ref{theo:categorification} provides a categorification of the lowest weight Fock space representation $\cF(A)$ of $\fh(A)$.  If we restrict the induction and restriction functors to $A\pmd$ and require $M,N \in A\pmd$ in the statement of the theorem, then we obtain a categorification of the lowest weight Fock space representation $\cF_\pj(A)$ of $\fh_\pj(A)$.  Note that the categorification in Theorem~\ref{theo:categorification} does not rely on a particular presentation of the Heisenberg double $\fh(A)$ (see Remark~\ref{rem:Hecke-presentations}), in contrast to many other categorification statements appearing in the literature.

\begin{proof}[Proof of Theorem~\ref{theo:categorification}]
  Suppose $M \in A_m\md$, $N \in A_n\md$, $P \in A_p\pmd$, $Q \in A_q\pmd$, and $L \in A_\ell\md$.  Then we have
  \begin{align*}
    \Ind_M \circ \Ind_N (L) &= \Ind^{A_{m+n+\ell}}_{A_m \otimes A_{n + \ell}} \left( M \otimes \Ind^{A_{n + \ell}}_{A_n \otimes A_\ell} (N \otimes L) \right) \\
    &\cong \Ind^{A_{m+n+\ell}}_{A_m \otimes A_{n + \ell}} \Ind^{A_m \otimes A_{n + \ell}}_{A_m \otimes A_n \otimes A_\ell} (M \otimes N \otimes L) \\
    &\cong \Ind^{A_{m+n+\ell}}_{A_m \otimes A_n \otimes A_\ell} (M \otimes N \otimes L) \\
    &\cong \Ind^{A_{m+n+\ell}}_{A_{m+n} \otimes A_\ell} \Ind^{A_{m+n} \otimes A_\ell}_{A_m \otimes A_n \otimes A_\ell} (M \otimes N \otimes L) \\
    &\cong \Ind^{A_{m+n+\ell}}_{A_{m+n} \otimes A_\ell} \left( \Ind^{A_{m+n}}_{A_m \otimes A_n} (M \otimes N) \otimes L \right) \\
    &\cong \Ind_{\nabla(M \otimes N)} L.
  \end{align*}
  Since each of the above isomorphisms is natural in $L$, this proves~\eqref{cat-eq:ind}.

  Similarly, we have
  \begin{align*}
    \Res_P \circ \Res_Q (L) &= \Hom_{A_p} (P, \Res^{A_{\ell-q}}_{A_{\ell-q-p} \otimes A_p} \Hom_{A_q} (Q, \Res^{A_\ell}_{A_{\ell-q} \otimes A_q} L)) \\
    &\cong \Hom_{A_p} (P, \Hom_{A_q} (Q, \Res^{A_\ell}_{A_{\ell-p-q} \otimes A_p \otimes A_q} L)) \\
    &\cong \Hom_{A_p \otimes A_q} (P \otimes Q, \Res^{A_\ell}_{A_{\ell-p-q} \otimes A_p \otimes A_q} L) \\
    &\cong \Hom_{A_p \otimes A_q} (P \otimes Q, \Res^{A_{\ell-p-q} \otimes A_{p+q}}_{A_{\ell-p-q} \otimes A_p \otimes A_q} \Res^{A_\ell}_{A_{\ell-p-q} \otimes A_{p+q}} L) \\
    &\cong \Hom_{A_{p+q}} (\Ind^{A_{p+q}}_{A_p \otimes A_q} (P \otimes Q),\Res^{A_\ell}_{A_{\ell-p-q} \otimes A_{p+q}} L) \\
    &\cong \Res_{\nabla(P \otimes Q)} L.
  \end{align*}
  Since each of the above isomorphisms is natural in $L$, this proves~\eqref{cat-eq:res}.

  Finally, we have
  \begin{align*}
    \Res_P &\circ \Ind_M (L) = \Hom_{A_p} (P, \Res^{A_{m+\ell}}_{A_{m+\ell-p} \otimes A_p} \Ind^{A_{m+\ell}}_{A_m \otimes A_\ell} (M \otimes L)) \\
    &\cong \ts \Hom_{A_p} (P, \bigoplus_{s+t=p} \Ind^{A_{m+\ell-p} \otimes A_p}_{A_{m-s} \otimes A_s \otimes A_{\ell-t} \otimes A_t} \Res^{A_m \otimes A_\ell}_{A_{m-s} \otimes A_s \otimes A_{\ell-t} \otimes A_t} (M \otimes L)) \\
    &\cong \ts \Hom_{A_p} (P, \bigoplus_{s+t=p} \Ind^{A_{m+\ell-p} \otimes A_p}_{A_{m-s} \otimes A_{\ell-t} \otimes A_p} \Ind^{A_{m-s} \otimes A_{\ell-t} \otimes A_p}_{A_{m-s} \otimes A_s \otimes A_{\ell-t} \otimes A_t} \Res^{A_m \otimes A_\ell}_{A_{m-s} \otimes A_s \otimes A_{\ell-t} \otimes A_t} (M \otimes L)) \\
    &\cong \ts \bigoplus_{s+t=p} \Ind^{A_{m+\ell-p}}_{A_{m-s} \otimes A_{\ell-t}} \Hom_{A_p} (P, \Ind^{A_{m-s} \otimes A_{\ell-t} \otimes A_p}_{A_{m-s} \otimes A_s \otimes A_{\ell-t} \otimes A_t} \Res^{A_m \otimes A_\ell}_{A_{m-s} \otimes A_s \otimes A_{\ell-t} \otimes A_t} (M \otimes L)) \\
    &\cong \ts \bigoplus_{s+t=p} \Ind^{A_{m+\ell-p}}_{A_{m-s} \otimes A_{\ell-t}} \Hom_{A_s \otimes A_t} (\Psi^{\otimes 2} \Res^{A_p}_{A_s \otimes A_t} \Psi^{-1} (P), \Res^{A_m \otimes A_\ell}_{A_{m-s} \otimes A_s \otimes A_{\ell-t} \otimes A_t} (M \otimes L)) \\
    &\cong \nabla \Res_{\Psi^{\otimes 2} \Delta \Psi^{-1} (P)} (M \otimes L),
  \end{align*}
  where the first isomorphism follows from~\eqref{eq:strong-isom}.  Since all of the above isomorphisms are natural in $L$,
  this proves~\eqref{cat-eq:cross}.

  The final assertion of the theorem follows from Proposition~\ref{prop:dualizing-conjugation-isom} as explained in the paragraph following the statement of the theorem.
\end{proof}

%
\section{Hecke-like towers of algebras}
%

In the remainder of the paper we will be studying some well known examples of towers of algebras.  These examples are all quotients of groups algebras of braid groups by quadratic relations.  In this subsection, we prove that all such towers of algebras are strong and dualizing.  In this section, $\F$ is an algebraically closed field unless otherwise specified.

\begin{defin}[Hecke-like algebras and towers] \label{def:Hecke-like}
  We say that the $\F$-algebra $B$ is \emph{Hecke-like} (of degree $n$) if it is generated by elements $T_1,\dotsc,T_{n-1}$, subject to the relations
  \begin{gather*}
    T_i T_j = T_j T_i,\quad |i-j|>1, \\
    T_i T_{i+1} T_i = T_{i+1} T_i T_{i+1},\quad i=1,\dotsc,n-2, \\
    T_i^2 = c T_i + d,\quad i=1,\dotsc,n-1,
  \end{gather*}
  for some $c,d \in \F$ (independent of $i$).  In other words, $B$ is Hecke-like if it is a quotient of the group algebra of the braid group by quadratic relations (the last set of relations above).

  If, for $n \in \N$, the algebra $A_n$ is a Hecke-like algebra of degree $n$, and the constants $c,d$ above are independent of $n$, then we can define an external multiplication on $A = \bigoplus_{n \in \N} A_n$ by
  \begin{equation} \label{def-eq:ext-mult}
    \rho_{m,n} \colon A_m \otimes A_n \to A_{m+n},\quad T_i \otimes 1 \mapsto T_i,\ 1 \otimes T_i \mapsto T_{m+i}.
  \end{equation}
  Axioms (TA1) and (TA2) follow immediately.  Furthermore, it follows from Lemma~\ref{lem:Hecke-like-properties} below that, as a left $(A_m \otimes A_n)$-module, we have
  \[ \ts
    A_{m+n} = \bigoplus_{w \in X_{m,n}} (A_m \otimes A_n) T_w,
  \]
  where $X_{m,n}$ is a set of minimal length representatives of the cosets $(S_m \times S_n) \backslash S_{m+n}$.  Thus $A_{m+n}$ is a projective left $(A_m \otimes A_n)$-module.  Similarly, it is also a projective right $(A_m \otimes A_n)$-module and so axiom (TA3) is satisfied.   Finally (TA4) is satisfied since $\F$ is algebraically closed.  We call the resulting tower of algebras a \emph{Hecke-like tower of algebras}.
\end{defin}

\begin{lem} \label{lem:Hecke-like-properties}
  Suppose that $B$ is a Hecke-like algebra of degree $n$.
  \begin{enumerate}
    \item \label{lem-item:Hecke-like-basis} If, for $w \in S_n$, we define $T_w = T_{i_1} \dotsm T_{i_r}$, where $s_{i_1} \dotsm s_{i_r}$ is a reduced decomposition of $w$ (these elements are well defined by the braid relations), then $\{T_w\ |\ w\in S_n\}$ is a basis of $B$.  In particular, the dimension of $B$ is $n!$.
    \item \label{lem-item:Hecke-like-mult} We have
      \[
        T_{w_1} T_{w_2} = T_{w_1 w_2} \quad \text{for all } w_1,w_2 \in S_n \text{ such that } \ell(w_1 w_2) = \ell(w_1) + \ell(w_2),
      \]
      where $\ell$ is the usual length function on $S_n$.

    \item \label{lem-item:Hecke-like-Nakayama} The algebra $B$ is a Frobenius algebra with trace map $\lambda \colon B \to \F$ given by $\lambda(T_w) = \delta_{w,w_0}$, where $w_0$ is the longest element of $S_n$.  The corresponding Nakayama automorphism is the map $\psi_n \colon B \to B$ given by $\psi_n(T_i) = T_{n-i}$.
    \end{enumerate}
\end{lem}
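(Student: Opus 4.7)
My plan is to prove the three parts in order, following the classical Iwahori--Hecke algebra arguments, which adapt verbatim to this slightly more general setting where the quadratic parameters $c,d$ are arbitrary elements of $\F$.

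For part~\eqref{lem-item:Hecke-like-basis}, well-definedness of $T_w$ is Matsumoto's theorem: any two reduced expressions for $w\in S_n$ are related by braid and commutation moves, and these are precisely among the defining relations of $B$. Spanning of $\{T_w\}$ follows by induction on word length: in any monomial in the $T_i$'s, using the braid and commutation relations one can bring a pair of equal indices adjacent and then collapse them via $T_i^2=cT_i+d$, strictly reducing the total length of the surviving top-degree term. For linear independence (and hence $\dim B=n!$), I would construct an explicit representation of $B$ on the free $\F$-module $V$ with basis $\{v_w\}_{w\in S_n}$, with $T_i$ acting by $v_w\mapsto v_{s_iw}$ when $\ell(s_iw)>\ell(w)$ and by $v_w\mapsto cv_w+dv_{s_iw}$ when $\ell(s_iw)<\ell(w)$; one then checks the braid and quadratic relations hold by a case analysis on the possible configurations of $s_i,s_j,w$. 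This forces $\dim B\geq n!$, which combined with spanning yields the result. Part~\eqref{lem-item:Hecke-like-mult} is then immediate: under the hypothesis, concatenating reduced expressions for $w_1$ and $w_2$ gives a reduced expression for $w_1w_2$, so $T_{w_1}T_{w_2}=T_{w_1w_2}$ by well-definedness.

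For part~\eqref{lem-item:Hecke-like-Nakayama}, the key computational input derived from part~\eqref{lem-item:Hecke-like-mult} and the quadratic relation is
\[
  T_iT_w=\begin{cases} T_{s_iw} & \text{if } \ell(s_iw)>\ell(w),\\ cT_w+dT_{s_iw} & \text{if } \ell(s_iw)<\ell(w).\end{cases}
\]
By induction on $\ell(u)$ this gives an expansion $T_uT_v=\sum_x \alpha^{u,v}_x T_x$ in which the coefficient of $T_{w_0}$ vanishes whenever $\ell(u)+\ell(v)<\ell(w_0)$, and when $\ell(u)+\ell(v)=\ell(w_0)$ it equals $\delta_{v,u^{-1}w_0}$. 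Ordering the basis $\{T_w\}$ by $\ell(w)$, the Gram matrix $(\lambda(T_uT_v))$ is block upper-triangular with an ``antidiagonal'' block equal to the permutation matrix of $u\mapsto u^{-1}w_0$, and is therefore invertible; thus $\lambda$ is a Frobenius trace.

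For the Nakayama automorphism, observe first that $\psi_n\colon T_i\mapsto T_{n-i}$ preserves all defining relations (the quadratic relation is symmetric in $i$, and the braid/commutation relations are preserved by the diagram automorphism $i\mapsto n-i$), so it is a well-defined algebra automorphism of $B$. To verify $\lambda(xy)=\lambda(y\psi_n(x))$, linearity and induction on the word length of $x$ reduce the problem to $x=T_i$, $y=T_w$. A direct case analysis of $\ell(s_iw)$ versus $\ell(w)$ (and analogously $\ell(ws_{n-i})$ versus $\ell(w)$) gives
\[
  \lambda(T_iT_w)=\delta_{w,s_iw_0}+c\,\delta_{w,w_0},\qquad \lambda(T_wT_{n-i})=\delta_{w,w_0s_{n-i}}+c\,\delta_{w,w_0},
\]
and these coincide because $w_0$ conjugates $s_i$ to $s_{n-i}$ in $S_n$, i.e.\ $s_iw_0=w_0s_{n-i}$. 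The main obstacle is the linear independence in part~\eqref{lem-item:Hecke-like-basis}: because $c,d$ are arbitrary (including degenerate specializations such as $c=d=0$), one cannot invoke an off-the-shelf Hecke algebra result, and the verification of the braid relations for the proposed action on $V$ must be done by hand, dividing into subcases according to the relative lengths of $s_iw,\,s_js_iw,\,\ldots$. Once that is in place, the remaining steps are essentially careful length bookkeeping.
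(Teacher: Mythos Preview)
Your argument is correct and essentially matches the paper's. Parts~\eqref{lem-item:Hecke-like-basis} and~\eqref{lem-item:Hecke-like-mult} are left to the reader in the paper as the standard Hecke-algebra argument, which is exactly what you sketch; for the Nakayama automorphism in part~\eqref{lem-item:Hecke-like-Nakayama} both you and the paper do the same case analysis reducing to $x=T_i$, $y=T_w$ and invoking $s_iw_0=w_0s_{n-i}$. The only packaging difference is in the nondegeneracy of $\lambda$: you show the Gram matrix $(\lambda(T_uT_v))$ is invertible from its block anti-triangular shape, while the paper uses the equivalent criterion that $\ker\lambda$ contains no nonzero left ideal, noting that for nonzero $b=\sum_w a_wT_w$ with $\tau$ of maximal length among $\{w:a_w\ne 0\}$ one has $\lambda(T_{w_0\tau^{-1}}b)=a_\tau\ne 0$. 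Both arguments rest on precisely the length-filtration computation you describe.
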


\begin{proof}
  The proof of parts~\eqref{lem-item:Hecke-like-basis} and~\eqref{lem-item:Hecke-like-mult} is analogous to the proof for the usual Hecke algebra of type $A$ and is left to the reader.  It remains to prove part~\eqref{lem-item:Hecke-like-Nakayama}.

  Suppose $B$ is a Hecke-like algebra of degree $n$.  To show that $B$ is a Frobenius algebra with trace map $\lambda$, it suffices to show that $\ker \lambda$ contains no nonzero left ideals.  Let $I$ be a nonzero ideal of $B$.  Then choose a nonzero element $b = \sum_{w \in S_n} a_w T_w \in I$ and let $\tau$ be a maximal length element of the set $\{w \in S_n\ |\ a_w \ne 0\}$.  Then we have $\lambda(T_{w_0 \tau^{-1}} b) = a_\tau \ne 0$.  Thus $I$ is not contained in $\ker \lambda$.

  To show that $\psi_n$ is the Nakayama automorphism, it suffices to show that $\lambda (T_w T_i) = \lambda(T_{n-i} T_w)$ for all $i \in \{1,\dotsc,n-1\}$ and $w \in S_n$.  We break the proof into four cases.

  \emph{Case 1:} $\ell(w) \le \ell(w_0) - 2$.  In this case we clearly have $\lambda(T_w T_i) = 0 = \lambda(T_{n-i} T_w)$.

  \emph{Case 2:} $w = w_0$.  Then we can write $w = w_0 = \tau s_i$ for some $\tau \in S_n$ with $\ell(\tau) = \ell(w_0)-1$.  Then $\lambda(T_w T_i) = \lambda(cT_{w_0} + dT_\tau) = c$.  Now, since $w_0 s_i = s_{n-i} w_0$, we have $w = w_0 = s_{n-i} \tau$.  Thus $\lambda(T_{n-i} T_w) = \lambda(cT_{w_0} + d T_\tau) = c = \lambda(T_w T_i)$.

  \emph{Case 3:} $w s_i = w_0$.  Since, as noted above, we have $w_0 s_i = s_{n-i} w_0$, it follows that $s_{n-i} w = w_0$ and so $\lambda(T_w T_i) = \lambda(T_{w_0}) = \lambda(T_{n-i} T_w)$.

  \emph{Case 4:} $\ell(w) = \ell(w_0) - 1$, but $w s_i \ne w_0$.  Then we have $w = \tau s_i$ for some $\tau \in S_n$ with $\ell(\tau) = \ell(w)-1$.  Thus $\lambda(T_w T_i) = \lambda(cT_w + d T_\tau) = 0$.  Using again the equality $w_0 s_i = s_{n-i} w_0$, we have $s_{n-i} w \ne w_0$.  Then an analogous argument shows that $\lambda(T_{n-i} T_w)=0$.
\end{proof}

When considering a Hecke-like tower of algebras, we will always assume that the conjugation $\Psi$ is given by the Nakayama automorphisms $\psi_n$ described above (see Proposition~\ref{lem:Frobenius-conjugate}).

\begin{prop} \label{prop:Hecke-like}
  All Hecke-like towers of algebras are strong.
\end{prop}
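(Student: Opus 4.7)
The plan is to check the two axioms of Definition~\ref{def:strong}. Conjugate adjointness is automatic: by Lemma~\ref{lem:Hecke-like-properties}(c), each $A_n$ is a Frobenius algebra with Nakayama automorphism $\psi_n(T_i) = T_{n-i}$, so Lemma~\ref{lem:Frobenius-conjugate} produces the required conjugation $\Psi = \bigoplus_n \Psi_n$. The substance of the proposition lies in establishing the Mackey-type isomorphism $\Delta \nabla \cong \nabla^{\otimes 2} S_{23} \Delta^{\otimes 2}$.

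Unwinding the definitions of $\nabla$ and $\Delta$, it suffices to produce, for each $k,\ell,m,n \in \N$ with $k+\ell = m+n$ and naturally in $(M, N) \in (A_m \otimes A_n)\md$, an isomorphism
$$
  \Res^{A_{m+n}}_{A_k \otimes A_\ell} \Ind^{A_{m+n}}_{A_m \otimes A_n}(M \otimes N)
  \cong
  \bigoplus_{(a,b,c,d)} \Ind^{A_k \otimes A_\ell}_{A_a \otimes A_b \otimes A_c \otimes A_d} S_{23} \left( \Res^{A_m}_{A_a \otimes A_c} M \otimes \Res^{A_n}_{A_b \otimes A_d} N \right),
$$
where $(a,b,c,d)$ ranges over quadruples of nonnegative integers with $a+b = k$, $c+d = \ell$, $a+c = m$, $b+d = n$ (equivalently, $2\times 2$ matrices with row sums $(k,\ell)$ and column sums $(m,n)$). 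Using axiom~(TA3) together with tensor-Hom adjunction, this reduces to an $(A_k \otimes A_\ell, A_m \otimes A_n)$-bimodule decomposition of $A_{m+n}$ whose summands are $(A_k \otimes A_\ell) \otimes_{A_a \otimes A_b \otimes A_c \otimes A_d} (A_m \otimes A_n)$, with $A_a \otimes A_b \otimes A_c \otimes A_d$ acting on $A_m \otimes A_n$ through the conjugate embedding obtained by applying the swap $S_{23}$.

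To construct this bimodule decomposition I would combine Lemma~\ref{lem:Hecke-like-properties}(a),(b) with the standard double coset combinatorics of symmetric groups. The quadruples $(a,b,c,d)$ above parameterize exactly the $(S_k \times S_\ell, S_m \times S_n)$-double cosets in $S_{m+n}$: the double coset associated to $(a,b,c,d)$ has a unique minimal length representative $\sigma$ satisfying $(S_k \times S_\ell) \cap \sigma(S_m \times S_n)\sigma^{-1} = S_a \times S_b \times S_c \times S_d$, and each element of the coset admits a unique length-additive factorization $x \sigma y$ with $x$ (respectively $y$) a minimal length left (respectively right) coset representative of this intersection in $S_k \times S_\ell$ (respectively $S_m \times S_n$). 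Lemma~\ref{lem:Hecke-like-properties}(b) converts each such factorization into the identity $T_{x \sigma y} = T_x T_\sigma T_y$, and partitioning the basis $\{T_w : w \in S_{m+n}\}$ according to double cosets assembles the claimed bimodule decomposition.

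The main obstacle is the classical combinatorial analysis underlying the previous paragraph: parameterizing the double cosets for the specific pair of Young subgroups, identifying the minimal length double coset representatives, and verifying the length-additive factorization of arbitrary coset elements. These are standard facts in the theory of Coxeter groups but need to be spelled out carefully for the Young subgroups at hand. Once that combinatorial framework is in place, the rest is routine bookkeeping with the $T_w$ basis. A notable feature of the argument is that no property of the Hecke-like algebra beyond Lemma~\ref{lem:Hecke-like-properties}(a),(b) is invoked; in particular, neither the invertibility of the $T_i$ nor the specific values of the constants $c, d$ in $T_i^2 = cT_i + d$ play any role, which explains the uniform validity of the proposition across the entire Hecke-like family.
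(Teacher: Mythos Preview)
Your proposal is correct and follows essentially the same route as the paper: reduce the Mackey isomorphism to a bimodule decomposition of $_{(k,\ell)}(A_{m+n})_{(m,n)}$ indexed by $(S_k\times S_\ell,S_m\times S_n)$-double cosets, and use Lemma~\ref{lem:Hecke-like-properties}\eqref{lem-item:Hecke-like-basis},\eqref{lem-item:Hecke-like-mult} to transport the symmetric-group combinatorics to the $T_w$ basis. The only minor difference is in how the bimodule isomorphism is nailed down: the paper writes down the explicit minimal representative $w_r$, checks the commutation $T_{w_r}(a_1\otimes a_2\otimes a_3\otimes a_4)=(a_1\otimes a_3\otimes a_2\otimes a_4)T_{w_r}$, and then concludes by a dimension count, whereas you invoke the length-additive factorization $w=x\sigma y$ of every element of the double coset to build the isomorphism basis-by-basis. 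Both work and rest on the same Coxeter-theoretic input.
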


\begin{proof}
  Suppose $A = \bigoplus_{n \in \N} A_n$ is a Hecke-like tower of algebras.  We formulate the isomorphism~\eqref{eq:strong-isom} in terms of bimodules.  Fix $n,m,k,\ell$ such that $n+m=k+\ell$ and set $N=n+m$. Let $_{(k,\ell)}(A_N)_{(n,m)}$ denote $A_N$, thought of as an $(A_k\otimes A_\ell,A_n\otimes A_m)$-bimodule in the natural way.  Then we have
  \[
    \Res^{A_N}_{A_k \otimes A_\ell} \Ind^{A_N}_{A_n \otimes A_m} \cong {_{(k,\ell)}(A_N)_{(n,m)}} \otimes -.
  \]
  On the other hand, for each $r$ satisfying $k-m=n-\ell \le r \le \min \{n,k\}$, we have
  \begin{gather*}
    \Ind^{A_k \otimes A_\ell}_{A_r \otimes A_{n-r} \otimes A_{k-r} \otimes A_{\ell+r-n}} \Res^{A_n \otimes A_m}_{A_r \otimes A_{n-r} \otimes A_{k-r} \otimes A_{\ell+r-n}} \cong B_r \otimes -,\quad \text{ where} \\
    B_r=(A_k\otimes A_\ell)\otimes _{A_r\otimes A_{n-r} \otimes A_{k-r}\otimes A_{\ell+r-n}}(A_n\otimes A_m),
  \end{gather*}
  and where we view $A_k \otimes A_\ell$ as a right $(A_r \otimes A_{n-r} \otimes A_{k-r} \otimes A_{\ell + r - n})$-module via the map $a_1 \otimes a_2 \otimes a_3 \otimes a_4 \mapsto a_1 a_3 \otimes a_2 a_4$.  (This corresponds to the functor $S_{23}$ appearing in~\eqref{eq:strong-isom}.)  Therefore, in order to prove the isomorphism~\eqref{eq:strong-isom}, it suffices to prove that we have an isomorphism of bimodules
  \[ \ts
    _{(k,\ell)}(A_N)_{(n,m)} \cong \bigoplus_{r=n-\ell}^{\min\{n,k\}} B_r.
  \]

  Now, we have one double coset in $S_k\times S_\ell \setminus S_N / S_n \times S_m$ for each $r$ satisfying $k-m=n-\ell \le r \le \min \{n,k\}$ (see, for example, \cite[Appendix~3, p.~170]{Zel81}).  Precisely, the double coset $C_r$ corresponding to $r$ consists of the permutations $w \in S_n$ satisfying
  \begin{gather*}
    |w(\{1,\dotsc,n\}) \cap \{1,2,\dotsc,k\}| = r,\\
    |w(\{n+1,\dotsc,N\}) \cap \{1,2,\dotsc,k\}| = k-r, \\
    |w(\{1,\dotsc,n\}) \cap \{k+1,\dotsc,N\}| = n-r,\\
    |w(\{n+1,\dotsc,N\}) \cap \{k+1,\dotsc,N\}| = \ell-n+r = m-k+r.
  \end{gather*}
  Thus the cardinality of the double coset $C_r$ is
  \[
    |C_r| = m!n! \binom{k}{r} \binom{\ell}{n-r}.
  \]
  The permutation $w_r \in S_N$ given by
  \[
    w_r(i) =
    \begin{cases}
      i & \text{if } 1 \le i \le r, \\
      i - r + k & \text{if } r < i \le n, \\
      i - n + r & \text{if } n < i \le n+k-r, \\
      i & \text{if } n+k-r < i \le N,
    \end{cases}
  \]
  is a minimal length representative of $C_r$.  It then follows from Lemma~\ref{lem:Hecke-like-properties}\eqref{lem-item:Hecke-like-mult} that
  \begin{gather*}
    T_{w_r} T_i = T_i T_{w_r} \quad \text{if } 1 \le i < r \text{ or } n+k-r < i < N, \\
    T_{w_r} T_i = T_{i-r+k} T_{w_r} \quad \text{if } r < i < n, \\
    T_{w_r} T_i = T_{i-n+r} T_{w_r} \quad \text{if } n < i < n+k-r.
  \end{gather*}
  Thus,
  \begin{equation} \label{eq:Twr-commutation}
    T_{w_r} (a_1 \otimes a_2 \otimes a_3 \otimes a_4) = (a_1 \otimes a_3 \otimes a_2 \otimes a_4) T_{w_r}
  \end{equation}
  for all $a_1 \in A_r$, $a_2 \in A_{n-r}$, $a_3 \in A_{k-r}$, $a_4 \in A_{\ell+r-n}$.

  Define $B_r' \subseteq {_{(k,\ell)}(A_N)_{(n,m)}}$ to be the sub-bimodule generated by $T_{w_r}$.  It follows from Lemma~\ref{lem:Hecke-like-properties} that ${_{(k,\ell)}(A_N)_{(n,m)}}=\bigoplus_{r=n-\ell}^{\min\{n,k\}} B_r'$ and that $\dim_\F B_r' = |C_r|$.  It also follows that the dimension of $A_k \otimes A_\ell$ as a right module over $A_r \otimes A_{n-r} \otimes A_{k-r} \otimes A_{\ell-n+r}$ is $k! \ell! / r! (n-r)! (k-r)! (\ell - n + r)!$ and that the dimension of $A_m \otimes A_n$ as a left module over $A_r \otimes A_{n-r} \otimes A_{k-r} \otimes A_{\ell+r-n}$ is $m! n! / r! (n-r)! (k-r)! (\ell - n+ r)!$.  Therefore, $\dim_\F B_r = |C_r| = \dim_\F B_r'$.  Now consider the $(A_k \otimes A_\ell, A_n \otimes A_m)$-bimodule map $B_r \to B_r'$ uniquely determined by
  \[
    1_{A_k \otimes A_\ell} \otimes 1_{A_n \otimes A_m} \mapsto T_{w_r},
  \]
  which is well defined by~\eqref{eq:Twr-commutation}.
  This map is surjective, and thus is an isomorphism by dimension considerations.
\end{proof}

\begin{lem} \label{lem:Hecke-like-coprod-conjugation}
  If $A$ is a Hecke-like tower of algebras, then we have an isomorphism of functors $\Psi^{\otimes 2} \Delta \Psi^{-1} \cong S_{12} \Delta$ on $A\md$ (hence also on $A\pmd$).
\end{lem}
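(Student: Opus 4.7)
The plan is to check the isomorphism on each graded piece by exhibiting an explicit identification at the level of underlying vector spaces. Fix $M\in A_n\md$ and a decomposition $n=k+\ell$; write $\iota_{k,\ell}\colon A_k\otimes A_\ell\hookrightarrow A_n$ for the embedding coming from the external multiplication, i.e.\ $T_i\otimes 1\mapsto T_i$ and $1\otimes T_j\mapsto T_{k+j}$. The two summands of $\Psi^{\otimes 2}\Delta\Psi^{-1}(M)$ and $S_{12}\Delta(M)$ indexed by $(k,\ell)$ (with $k+\ell=n$) are both equal to $M$ as $\F$-vector spaces, so it suffices to show that the two twisted $A_k\otimes A_\ell$-actions coincide under the identity map.

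The key computation is the commutation of the Nakayama automorphism with the embeddings. I first record that $\psi_n$ is an involution, since $\psi_n^2(T_i)=\psi_n(T_{n-i})=T_i$. Next, a direct check on the generators yields the identity
\[
  \psi_n\circ\iota_{k,\ell}(a\otimes b)=\iota_{\ell,k}\bigl(\psi_\ell(b)\otimes\psi_k(a)\bigr)\quad\text{for all }a\in A_k,\ b\in A_\ell,
\]
because $\psi_n(T_i)=T_{n-i}=T_{\ell+(k-i)}=\iota_{\ell,k}(1\otimes\psi_k(T_i))$ when $1\le i\le k-1$, and similarly $\psi_n(T_{k+j})=T_{n-k-j}=T_{\ell-j}=\iota_{\ell,k}(\psi_\ell(T_j)\otimes 1)$ when $1\le j\le\ell-1$. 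Using that $\psi_k^2=\id$ and $\psi_\ell^2=\id$ (and $\psi_n^{-1}=\psi_n$), this rearranges to
\[
  \psi_n^{-1}\bigl(\iota_{k,\ell}(\psi_k(a)\otimes\psi_\ell(b))\bigr)=\iota_{\ell,k}(b\otimes a).
\]

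The conclusion is then immediate. On the $(k,\ell)$-summand, the $A_k\otimes A_\ell$-action on $\Psi^{\otimes 2}\Delta\Psi^{-1}(M)$ sends $(a\otimes b)\cdot m$ to $\psi_n^{-1}\bigl(\iota_{k,\ell}(\psi_k(a)\otimes\psi_\ell(b))\bigr)\cdot m$, which by the displayed identity equals $\iota_{\ell,k}(b\otimes a)\cdot m$. On the other hand, $S_{12}\Delta(M)$ places the same underlying vector space in the $(k,\ell)$-slot with the action $(a\otimes b)\cdot m=\iota_{\ell,k}(b\otimes a)\cdot m$. The two actions agree, so the identity map of $M$ is an isomorphism of $A_k\otimes A_\ell$-modules; assembling over all $(k,\ell)$ with $k+\ell=n$ and all $n$ yields the desired isomorphism of functors. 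Since $\Psi$ preserves $A\pmd$, the same statement holds on $A\pmd$.

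The only nontrivial point is the commutation identity between $\psi_n$ and $\iota_{k,\ell}$; once one observes that the Nakayama involution $T_i\mapsto T_{n-i}$ precisely implements the reversal symmetry of the type $A_{n-1}$ Dynkin diagram, and that reversal swaps the ``head'' and ``tail'' subalgebras $A_k$ and $A_\ell$ while conjugating each by its own Nakayama involution, the rest of the argument is bookkeeping. The naturality of all constructions in $M$ makes the identity map an isomorphism of functors, not merely of individual modules.
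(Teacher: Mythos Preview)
Your proof is correct and is essentially the same as the paper's: both reduce to the identity $\psi_n\circ\iota_{k,\ell}(a\otimes b)=\iota_{\ell,k}(\psi_\ell(b)\otimes\psi_k(a))$ on generators, together with the fact that each $\psi_m$ is an involution. The only difference is packaging: the paper expresses the two functors as tensoring with appropriate bimodules and writes down the bimodule isomorphism $(1_m^\psi\otimes 1_n^\psi)\otimes 1_{m+n}^\psi\mapsto(1_n\otimes 1_m)\otimes 1_{m+n}$, whereas you verify directly that the identity map on the underlying vector space of $M$ intertwines the two $A_k\otimes A_\ell$-actions; the well-definedness check in the paper's argument is exactly your commutation identity.
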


\begin{proof}
  It suffices to prove that, for $m, n \in \N$, we have an isomorphism of functors
  \[
    (\Psi_m \otimes \Psi_n) \circ \Res^{A_{m+n}}_{A_m \otimes A_n} \circ \Psi_{m+n}^{-1} \cong S_{12} \circ \Res^{A_{m+n}}_{A_n \otimes A_m}.
  \]
  Describing each functor above as tensoring on the left by the appropriate bimodule, it suffices to prove that we have an isomorphism of bimodules
  \begin{equation} \label{eq:bimodule-isom-comult}
    (A_m^\psi \otimes A_n^\psi) \otimes_{A_m \otimes A_n} A_{m+n}^\psi \cong S \otimes_{A_n \otimes A_m} A_{m+n},
  \end{equation}
  where $A_k^\psi$, $k \in \N$, denotes $A_k$, considered as an $(A_k,A_k)$-bimodule with the right action twisted by $\psi_k$, and where $S$ is $A_n \otimes A_m$ considered as an $(A_m \otimes A_n, A_n \otimes A_m)$-module via the obvious right multiplication and with left action given by $(a_1 \otimes a_2, s) \mapsto (a_2 \otimes a_1) s$ for $s \in S$, $a_1 \in A_m$, $a_2 \in A_n$.

  For $k \in \N$, let $1_k$ denote the identity element of $A_k$ and $1_k^\psi$ denote this same element considered as an element of $A_k^\psi$.   It is straightforward to show that the map between the bimodules in~\eqref{eq:bimodule-isom-comult} given by
  \[
    (1_m^\psi \otimes 1_n^\psi) \otimes 1_{m+n}^\psi \mapsto (1_n \otimes 1_m) \otimes 1_{m+n}.
  \]
  (and extended by linearity) is a well defined isomorphism.
  \details{
  For $i \in \{1,\dotsc,m\}$ and $j \in \{1,\dotsc,n\}$, in $S \otimes_{A_n \otimes A_m} A_{m+n}$ we have
  \begin{gather*}
    (T_i \otimes 1) \left( (1_n \otimes 1_m) \otimes 1_{m+n} \right) = (1_n \otimes T_i) \otimes 1_{m+n} = \left( (1_n \otimes 1_m) \otimes 1_{m+n} \right) T_{n+i}, \\
    (1 \otimes T_j) \left( (1_n \otimes 1_m) \otimes 1_{m+n} \right) = (T_j \otimes 1_m) \otimes 1_{m+n} = \left( (1_n \otimes 1_m) \otimes 1_{m+n} \right) T_j,
  \end{gather*}
  and in $(A_m^\psi \otimes A_n^\psi) \otimes_{A_m \otimes A_n} A_{m+n}^\psi$ we have
  \begin{gather*}
    (T_i \otimes 1) \left( (1_m^\psi \otimes 1_n^\psi) \otimes 1^\psi_{m+n} \right) = \left( (1_m^\psi \otimes 1_n^\psi) (T_{m-i} \otimes 1) \right) \otimes 1_{m+n}^\psi = \left( (1_m^\psi \otimes 1_n^\psi) \otimes 1_{m+n}^\psi \right) T_{n+i}, \\
    (1 \otimes T_j) \left( (1_m^\psi \otimes 1_n^\psi) \otimes 1^\psi_{m+n} \right) = \left( (1_m^\psi \otimes 1_n^\psi) (1 \otimes T_{n-j}) \right) \otimes 1_{m+n}^\psi = \left( (1_m^\psi \otimes 1_n^\psi) \otimes 1_{m+n}^\psi \right) T_j.
  \end{gather*}
  Thus we have a well defined map between the bimodules in~\eqref{eq:bimodule-isom-comult} given by
  \[
    (1_m^\psi \otimes 1_n^\psi) \otimes 1_{m+n}^\psi \mapsto (1_n \otimes 1_m) \otimes 1_{m+n}.
  \]
  This map is clearly surjective.  Since we can construct the inverse map analogously, it is an isomorphism.
  }
\end{proof}

Suppose $A$ is a Hecke-like tower of algebras.  If $d \ne 0$ in Definition~\ref{def:Hecke-like}, then $T_i (T_i-c)/d = (T_i - c)T_i/d = 1$ and so $T_i$ is invertible for all $i$.  It follows that $T_w$ is invertible for all $w \in S_n$.  On the other hand, if $d=0$, then $T_i(T_i-c)=0$ and so $T_i$ is a zero divisor, hence not invertible.  Therefore, $d \ne 0$ if and only if $T_w$ is invertible for all $w$.

\begin{lem} \label{lem:Hecke-like-d-nonzero}
  If $A$ is a Hecke-like tower of algebras with $d \ne 0$ (equivalently, such that $T_i$ is invertible for all $i$), then we have isomorphisms of functors $\nabla \cong \nabla S_{12}$ and $\Delta \cong S_{12} \Delta$ on $A\md$ (hence also on $A\pmd$).  In particular, $A$ is dualizing.
\end{lem}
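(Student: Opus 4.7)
The plan is to exploit the invertibility of each $T_i$ (which holds because $d\ne 0$) to construct, for each $m,n\in\N$, an invertible element of $A_{m+n}$ that intertwines the images of the external multiplications $\rho_{m,n}$ and $\rho_{n,m}$. Let $\sigma_{m,n}\in S_{m+n}$ be the permutation defined by $i\mapsto i+n$ for $1\le i\le m$ and $i\mapsto i-m$ for $m<i\le m+n$. A direct calculation gives the permutation identities $\sigma_{m,n}s_i = s_{n+i}\sigma_{m,n}$ for $1\le i<m$ and $\sigma_{m,n}s_{m+j} = s_j\sigma_{m,n}$ for $1\le j<n$; since $\sigma_{m,n}(i)<\sigma_{m,n}(i+1)$ in the first case and $\sigma_{m,n}(m+j)<\sigma_{m,n}(m+j+1)$ in the second, both sides of each identity have length $\ell(\sigma_{m,n})+1$.

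Applying Lemma~\ref{lem:Hecke-like-properties}\eqref{lem-item:Hecke-like-mult} to both sides of each identity lifts them to relations in $A_{m+n}$: $T_{\sigma_{m,n}}T_i = T_{n+i}T_{\sigma_{m,n}}$ for $1\le i<m$ and $T_{\sigma_{m,n}}T_{m+j} = T_jT_{\sigma_{m,n}}$ for $1\le j<n$. Since $A_m$ and $A_n$ are generated by the $T_i$, these combine to give
\[
T_{\sigma_{m,n}}\,\rho_{m,n}(a\otimes b) = \rho_{n,m}(b\otimes a)\,T_{\sigma_{m,n}} \quad \text{for all } a\in A_m,\ b\in A_n.
\]
Because $d\ne 0$, $T_{\sigma_{m,n}}$ is invertible in $A_{m+n}$, so left (respectively, right) multiplication by $T_{\sigma_{m,n}}^{\pm 1}$ yields an isomorphism between $A_{m+n}$ regarded as an $(A_{m+n},A_m\otimes A_n)$-bimodule via $\rho_{m,n}$ and $A_{m+n}$ regarded as an $(A_{m+n},A_n\otimes A_m)$-bimodule via $\rho_{n,m}$, where on the right hand side the two tensor factors are understood to be swapped; the analogous statement holds for $(A_m\otimes A_n,A_{m+n})$- versus $(A_n\otimes A_m,A_{m+n})$-bimodules. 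Since $\nabla$ and $\Delta$ are given by tensoring with these bimodules, this translates directly into the functor isomorphisms $\nabla\cong\nabla S_{12}$ and $\Delta\cong S_{12}\Delta$ on $A\md$, and since the implementing maps are multiplication by an invertible element they restrict to $A\pmd$.

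For the final assertion, $A$ is strong by Proposition~\ref{prop:Hecke-like}, and Lemma~\ref{lem:Hecke-like-coprod-conjugation} gives $\Psi^{\otimes 2}\Delta\Psi^{-1}\cong S_{12}\Delta$ on $A\pmd$; combining this with the isomorphism $S_{12}\Delta\cong\Delta$ just established yields $\Psi^{\otimes 2}\Delta\Psi^{-1}\cong\Delta$ on $A\pmd$, and Proposition~\ref{prop:dualizing-conjugation-isom} then implies that $A$ is dualizing. The main obstacle is the verification of $T_{\sigma_{m,n}}\,\rho_{m,n}(a\otimes b) = \rho_{n,m}(b\otimes a)\,T_{\sigma_{m,n}}$, which comes down to confirming the length additivity conditions so that Lemma~\ref{lem:Hecke-like-properties}\eqref{lem-item:Hecke-like-mult} applies on each side of each generator identity; once this is in hand, the remaining manipulations are essentially bookkeeping.
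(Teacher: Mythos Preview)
Your proof is correct and follows essentially the same approach as the paper. Both arguments use the shuffle permutation swapping the two blocks, verify the commutation relations $T_\sigma T_i = T_{\sigma(i)} T_\sigma$ via the length-additivity criterion of Lemma~\ref{lem:Hecke-like-properties}\eqref{lem-item:Hecke-like-mult}, translate this (using invertibility of $T_\sigma$) into a bimodule isomorphism implementing $\nabla \cong \nabla S_{12}$ and $\Delta \cong S_{12}\Delta$, and then conclude by combining Proposition~\ref{prop:Hecke-like}, Lemma~\ref{lem:Hecke-like-coprod-conjugation}, and Proposition~\ref{prop:dualizing-conjugation-isom}; the only cosmetic difference is that the paper packages the twist via an explicit $(A_n\otimes A_m, A_m\otimes A_n)$-bimodule $S$, whereas you phrase it directly as multiplication by $T_{\sigma_{m,n}}$.
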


\begin{proof}
  Let $m, n \in \N$ and define $w \in S_{m+n}$ by
  \[
    w(i) =
    \begin{cases}
      m+i & \text{if } 1 \le i \le n, \\
      i-n & \text{if } n < i \le m+n.
    \end{cases}
  \]
  Then $T_w$ is invertible and, by Lemma~\ref{lem:Hecke-like-properties}\eqref{lem-item:Hecke-like-mult}, we have $T_w T_i = T_{w(i)} T_w$ for all $i=1,\dotsc,m-1,m+1,\dotsc,m+n-1$.  Now, let $S$ be $A_m \otimes A_n$ considered as an $(A_n \otimes A_m, A_m \otimes A_n)$-module via the obvious right multiplication and with left action given by $(a_1 \otimes a_2, s) \mapsto (a_2 \otimes a_1) s$ for $s \in S$, $a_1 \in A_n$, $a_2 \in A_m$.  Thus, we have an isomorphism of functors $S_{12} \cong S \otimes -$.  It is straightforward to verify that the map
  \[
    A_{m+n} \to A_{m+n} \otimes_{A_n \otimes A_m} S,\quad a \mapsto aT_w \otimes (1 \otimes 1),
  \]
  is an isomorphism of $(A_{m+m}, A_m \otimes A_n)$-bimodules.  It follows that $\nabla \cong \nabla S_{12}$.  The proof that $\Delta \cong S_{12} \Delta$ is analogous.  The final statement of the lemma then follows from Lemma~\ref{lem:Hecke-like-coprod-conjugation} and Propositions~\ref{prop:dualizing-conjugation-isom} and~\ref{prop:Hecke-like}.
\end{proof}

\begin{cor} \label{cor:Hecke-like-strong-dualizing}
  All Hecke-like towers of algebras are strong and dualizing.
\end{cor}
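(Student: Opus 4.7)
The \emph{strong} half of the statement is already the content of Proposition~\ref{prop:Hecke-like}, so the only remaining issue is the dualizing property. I would split the argument on whether the quadratic parameter $d$ in Definition~\ref{def:Hecke-like} vanishes. When $d \neq 0$, every $T_i$ (and thus every $T_w$) is a unit, and Lemma~\ref{lem:Hecke-like-d-nonzero} delivers the dualizing property immediately, so the real content of the corollary is the degenerate case $d = 0$.

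For $d=0$, my plan is to invoke Proposition~\ref{prop:dualizing-conjugation-isom}. Since Lemma~\ref{lem:Hecke-like-coprod-conjugation} supplies the universal isomorphism $\Psi^{\otimes 2}\Delta\Psi^{-1} \cong S_{12}\Delta$ on $A\pmd$, condition (b) of Proposition~\ref{prop:dualizing-conjugation-isom} reduces to checking that $S_{12}\Delta(P) \cong \Delta(P)$ for every projective $P \in A\pmd$; equivalently, the coproduct on $\cK(A)$ is cocommutative.

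I expect the main obstacle to be precisely this cocommutativity on projectives, because the proof of Lemma~\ref{lem:Hecke-like-d-nonzero} made essential use of the invertibility of the shuffle element $T_w \in A_{m+n}$, which fails when $d = 0$. My first attempt would be a direct bimodule construction: realise any indecomposable projective $P$ as a summand of some $A_n e$ for a primitive idempotent $e$, and use the strong isomorphism~\eqref{eq:strong-isom} to decompose the doubly restricted bimodule into pieces on which an explicit swap map can be verified to be an isomorphism even without invertibility of $T_w$. As a fallback, I would note that, after rescaling the generators, every $d=0$ Hecke-like tower is either the nilcoxeter tower ($c=0$) or, via $T_i \mapsto T_i/c$, the 0-Hecke tower ($c \neq 0$); in both cases cocommutativity of $\cK(A)$ is classical (with $\cK(A)$ being the divided-power polynomial ring in one variable and $\NS$ respectively), which then yields dualizing via Proposition~\ref{prop:dualizing-conjugation-isom}.
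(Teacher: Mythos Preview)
Your fallback argument is exactly the paper's proof: reduce dualizing to cocommutativity of $\cK(A)$ via Lemma~\ref{lem:Hecke-like-coprod-conjugation} and Proposition~\ref{prop:dualizing-conjugation-isom}, then classify and quote the known Hopf structures. The paper invokes \cite[Prop.~6.1]{BLL12} for the full four-case classification, whereas your rescaling handles the $d=0$ split directly; either way one lands on nilcoxeter or $0$-Hecke and uses the descriptions in Sections~\ref{sec:Weyl} and~\ref{sec:qsym-nsym}. One small slip: for the nilcoxeter tower it is $\cG(N)$ that is the divided-power ring, while $\cK(N)\cong\Z[x]$ --- harmless here, since both are cocommutative. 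Your ``first attempt'' (a direct bimodule swap on projectives without invertibility of $T_w$) is not pursued in the paper, and I would not expect it to go through uniformly without effectively re-deriving the classification.
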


\begin{proof}
  It follows immediately from Lemma~\ref{lem:Hecke-like-coprod-conjugation} and Propositions~\ref{prop:dualizing-conjugation-isom} and~\ref{prop:Hecke-like} that $A$ is strong and that it is dualizing if and only if $\cK(A)$ is cocommutative.  By~\cite[Prop.~6.1]{BLL12}, $A$ is isomorphic to either the tower of nilcoxeter algebras, the tower of Hecke algebras at a generic parameter, the tower of Hecke algebras at a root of unity, or the tower of 0-Hecke algebras.  (While the statement of~\cite[Prop.~6.1]{BLL12} is over $\C$, the proof is valid over an arbitrary algebraically closed field.)  For the tower of Hecke algebras at a generic parameter or a root of unity, we have $d \ne 0$ in Definition~\ref{def:Hecke-like} and so $\cK(A)$ is cocommutative by Lemma~\ref{lem:Hecke-like-coprod-conjugation}.   For the other two towers, we will see in Sections~\ref{sec:Weyl} and~\ref{sec:qsym-nsym} that $\cK(A)$ is cocommutative.
\end{proof}

\begin{rem} \label{rem:Hecke-like-general-field}
  In this section, the assumption that $\F$ is algebraically closed was only used to conclude that axiom (TA4) of Definition~\ref{def:tower} is satisfied and in the proof of Corollary~\ref{cor:Hecke-like-strong-dualizing}.  If $\F$ is not algebraically closed but the bilinear form~\eqref{eq:tower-pairing} is still a perfect pairing, then all the results of this section remain true except that one must replace Corollary~\ref{cor:Hecke-like-strong-dualizing} by the statement that the tower of nilcoxeter algebras, the tower of Hecke algebras at a generic parameter, the tower of Hecke algebras at a root of unity, and the tower of 0-Hecke algebras are all strong and dualizing.
\end{rem}

%
\section{Nilcoxeter algebras} \label{sec:Weyl}
%

In this section we specialize the constructions of Sections~\ref{sec:general-construction} and~\ref{sec:dual-pairs-from-towers} to the tower of nilcoxeter algebras of type $A$.  We will see that we recover Khovanov's categorification of the polynomial representation of the Weyl algebra (see \cite{Kho01}).  We let $\F$ be an arbitrary field.

The \emph{nilcoxeter algebra} $N_n$ is the Hecke-like algebra of Definition~\ref{def:Hecke-like} with $c=d=0$.  The representation theory of $N_n$ is straightforward.  (We refer the reader to~\cite{Kho01} for proofs of the facts stated here.)  Up to isomorphism, there is one simple module $L_n$, which is one dimensional, and on which the generators all act by zero.  The projective cover of $L_n$ is $P_n=N_n$, considered as an $N_n$-module by left multiplication.  We have isomorphisms of Hopf algebras
\begin{gather*}
  \cK(N) \cong \Z[x],\quad [P_n] \mapsto x^n, \\
  \cG(N) \cong \Z[x,x^2/2!, x^3/3!,\dotsc] \subseteq \Q[x],\quad [L_n] \mapsto x^n/n!.
\end{gather*}
In both cases, the coproduct is given by $\Delta(x) = x \otimes 1 + 1 \otimes x$.  We also have
\[
  \cG_\pj(N) \cong \Z[x],
\]
and the Cartan map $\cK(N) \to \cG(N)$ of Definition~\ref{def:G-proj} corresponds to the natural inclusion $\Z[x] \hookrightarrow \Z[x,x^2/2!,x^3/3!,\dotsc]$.

The inner product satisfies $\langle x^m,\frac{x^n}{n!} \rangle = \langle [P_m] , [L_n] \rangle =\delta_{mn}$.  Therefore $x^*\left(\frac{x^m}{m!}\right) = \frac{x^{m-1}}{(m-1)!}$, i.e.\ $x^*=\partial_x$ corresponds to partial derivation by $x$.  Therefore the algebra $\fh$ in this setting is the subalgebra of $\End \Z[x,x^2/2,x^3/3!,\dotsc]$ generated by $x,x^2/2!,x^3/3!,\dotsc$ and $\partial_x$.  In addition, ${\mathfrak{h}_\pj}$ is the algebra generated by $x$ and $\partial_x$, with relation $[\partial_x, x] = 1$.  The Fock space $\cF_\pj$ is the representation of ${\mathfrak{h}_\pj}$ given by its natural action on $\Z[x]$.  It follows from the above that $\Q \otimes_\Z \fh \cong \Q \otimes_\Z {\mathfrak{h}_\pj}$ is the rank one Weyl algebra.

By Remark~\ref{rem:Hecke-like-general-field}, the tower $N$ is strong and dualizing.  Thus, Theorem~\ref{theo:categorification} provides a categorification of the polynomial representation of the Weyl algebra.  In fact,~\eqref{cat-eq:cross} specializes to the main result of \cite{Kho01} if one takes $M$ and $P$ to be the trivial $N_1$-modules.  Indeed, with these choices we have $\Psi^{\otimes 2} \Delta \Psi^{-1}(P) = (\F_0 \otimes \F_1) \oplus (\F_1 \otimes \F_0)$, where $\F_i$ denotes the trivial $N_i$-module for $i=0,1$.  Then~\eqref{cat-eq:cross} becomes
\begin{align*}
  \Res^{N_{n+1}}_{N_n} \circ \Ind^{N_{n+1}}_{N_n} &\cong \left(\Ind^{N_n}_{N_{n-1}} \circ \Res_{(\F_0 \otimes \F_1)}(\F_1 \otimes -)\right) \oplus \left(\Res_{(\F_1 \otimes \F_0)} (M \otimes -)\right) \\
  &\cong \left(\Ind^{N_n}_{N_{n-1}} \circ \Res^{N_n}_{N_{n-1}}\right) \oplus \id,
\end{align*}
which is the categorification of the relation $\partial_x x = x \partial_x + 1$ appearing in~\cite[(13)]{Kho01}.  (Note that while~\cite{Kho01} works over the field $\Q$, the arguments go through over more general $\F$.)

%
\section{Hecke algebras at generic parameters} \label{sec:sym}
%

In this section we specialize the constructions of Sections~\ref{sec:general-construction} and~\ref{sec:dual-pairs-from-towers} to the tower of algebras corresponding to the Hecke algebras of type $A$ at a generic parameter.  The results of this section also apply to the group algebra of the symmetric group (the case when $q=1$).

\subsection{The Hecke algebra and symmetric functions} \label{subsec:sym-functions}

Let $A_n$ be the Hecke algebra at a generic value of $q$.  More precisely, assume $q \in \C^\times$ is not a nontrivial root of unity and let $A_n$ be the unital $\C$-algebra of Definition~\ref{def:Hecke-like} with $c=q-1$ and $d=q$.  By convention, we set $A_0 = A_1 = \C$.  Then $A = \bigoplus_{n \in \N} A_n$ is a Hecke-like tower of algebras.  It is well known that a complete set of irreducible $A_n$-modules is given by $\{S_\lambda\ |\ \lambda \in \partition(n)\}$, where $S_\lambda$ is the Specht module corresponding to the partition $\lambda$ (see~\cite[\S6]{DJ86}).  Since the $A_n$ are semisimple, we have $\cK(A) = \cG(A)$.  In fact, both are isomorphic (as Hopf algebras) to $\Sy$, the algebra of symmetric functions in countably many variables $x_1,x_2,\dotsc$ over $\Z$.  This isomorphism is given by the map sending $[S_\lambda]$ to $s_\lambda$, the Schur function corresponding to the partition $\lambda$ (see, for example, \cite{Zel81}).  Recall that $\Sy$ is a graded connected Hopf algebra:
\[ \ts
  \Sy = \bigoplus_{n \ge 0} \Sy_n,
\]
where $\Sy_n$ is the $\Z$-submodule of $\Sy$ consisting of homogeneous polynomials of degree $n$.  We adopt the convention that $\Sy_n = 0$ for $n < 0$.  The inner product~\eqref{eq:tower-pairing} corresponds to the usual inner product on $\Sy$ under which the Schur functions are self-dual.  Furthermore, the monomial and homogeneous symmetric functions are dual to each other:
\[
  \langle m_\lambda,h_\mu \rangle = \delta_{\lambda,\mu},\quad \lambda, \mu \in \partition.
\]
Under this inner product, $\Sy$ is self-dual as a Hopf algebra.  In other words, $(\Sy, \Sy)$ is a dual pair of Hopf algebras.

\subsection{The Heisenberg algebra} \label{subsec:classical-h}

Applying the construction of Section~\ref{subsec:h-definition} to the dual pair $(\Sy,\Sy)$, we obtain the \emph{Heisenberg algebra} $\fh = \fh(\Sy,\Sy)$.  We obtain a minimal presentation of $\fh$ by considering two collections of polynomial generators for $\Sy$ (one for $\Sy$ viewed as $H^+$ and one for $\Sy$ viewed as $H^-$).  In particular, if we choose the power sum symmetric functions $p_n$, $n \in \N_+$, in both cases, we get the usual presentation of the Heisenberg algebra:
\[
  [p_n, p_k] = 0,\quad [p_n^*, p_k^*] = 0,\quad [p_n^*,p_k] = n \delta_{n,k},\quad n,k \in \N_+.
\]
However, $\{p_n^*, p_n\ |\ n \in \N_+\}$ is only a generating set for $\fh \otimes_\Z \Q$ since the power sum symmetric functions only generate the ring of symmetric functions over $\Q$.

One the other hand, if we choose the elementary symmetric functions $e_n$, $n \in \N_+$, and the complete symmetric functions $h_n$, $n \in \N_+$, we have the following relations:
\begin{equation} \label{eq:heisenberg-eh-relations}
  [e_n, e_k] = 0,\quad [h_n^*,h_k^*] = 0,\quad [h_n^*, e_k] = e_{k-1}h_{n-1}^*,\quad n,k \in \N_+.
\end{equation}
(We adopt the convention that $e_0=e_0^*=h_0=h_0^*=1$ and $e_n=e_n^*=h_n=h_n^*=0$ for $n<0$.)  This gives a presentation of $\fh$ (one does not need to tensor with $\Q$) and is the one used in the categorification of $\fh$ given in \cite{Kho10,LS13} (for an overview, see \cite{LS12}).

Other choices of polynomial generators result in different presentations.  For the sake of completeness we record the other nontrivial relations:
\begin{equation} \label{eq:heisenberg-other-relations} \ts
  [e_n^*, h_k] = h_{k-1}e_{n-1}^*,\ [h_n^*, h_k] = \sum_{i\geq 1}h_{k-i}h_{n-i}^*,\ [e_n^*, e_k] = \sum_{i\geq 1}e_{k-i}e_{n-i}^*,\ n,k \in \N_+.
\end{equation}
To prove these relations, we use the fact (see, for example,~\cite[Prop.~3.6]{Z03}) that, for $k,n \in \N$, we have
\begin{gather*}
  h_k^*(h_n) = h_{n-k},\quad h_k^*(e_n) = (\delta_{k0} + \delta_{k1})e_{n-k},\quad h_k^*(p_n) = \delta_{kn} + \delta_{k0} p_n, \\
  e_k^*(h_n) = (\delta_{k0} + \delta_{k1})h_{n-k},\quad e_k^*(e_n)=e_{n-k},\ e_k^*(p_n) = (-1)^{k-1} \delta_{kn} + \delta_{k0}p_n, \\
  p_k^*(h_n) = h_{n-k},\quad p_k^*(e_n) = (-1)^{k-1}e_{n-k},\quad p_k^*(p_n) = n\delta_{nk} + \delta_{k0}p_n.
\end{gather*}
Then, for example, since $\Delta(e_n)=\sum_{i=0}^n e_i\otimes e_{n-i}$, we have, by Lemma \ref{lem:adjoint-action-on-product},
\[ \ts
  e_n^*(h_kf)=\sum_{i=0}^n e_i^*(h_k)e_{n-i}^*(f) = h_k e_n^*(f) + h_{k-1} e_{n-1}^*(f) \quad \text{for all } f \in \Sy.
\]
Thus $[e_n^*,h_k]=h_{k-1} e_{n-1}^*$.  The other relations are proven similarly.

\subsection{Categorification}

By Corollary~\ref{cor:Hecke-like-strong-dualizing}, the tower $A$ is strong and dualizing.  For $n \in \N$, let $E_n$ (resp.\ $L_n$) be the one-dimensional representation of $A_n$ on which each $T_i$ acts by $-1$ (resp.\ by $q$).  Then $\Delta(E_n) \cong \sum_{i=0}^n E_i \otimes E_{n-i}$ and $\Delta(L_n) \cong \sum_{i=0}^n L_i \otimes L_{n-i}$.  Since $\Hom_{A_n} (L_n, E_n) = 0$ unless $n=0$ or $n=1$ (in which case $E_n$ and $L_n$ are both the trivial module), we have, by ~\eqref{cat-eq:cross},
\[ \ts
  \Res_{L_n} \circ \Ind_{E_k} \cong \nabla \left( \bigoplus_{i=0}^n \Res_{L_i \otimes L_{n-i}} (E_k \otimes -) \right) \cong \left( \Ind_{E_k} \circ \Res_{L_n} \right) \oplus \left( \Ind_{E_{k-1}} \circ \Res_{L_{n-1}} \right),
\]
which is a categorification of the last relation of~\eqref{eq:heisenberg-eh-relations} since, under the isomorphism $\cG(A) \cong \cK(A) \cong \Sy$, the class of the representation $L_n$ corresponds to $h_n$ and the class of $E_k$ corresponds to $e_k$.  By~\eqref{cat-eq:ind}, \eqref{cat-eq:res}, and Lemma~\ref{lem:Hecke-like-d-nonzero}, we have
\begin{align*}
  \Ind_{E_n} \circ \Ind_{E_k} \cong \Ind_{\nabla(E_n \otimes E_k)} \cong \Ind_{\nabla(E_k \otimes E_n)} \cong \Ind_{E_k} \circ \Ind_{E_n}, \quad \text{and} \\
  \Res_{L_n} \circ \Res_{L_k} \cong \Res_{\nabla(L_n \otimes L_k)} \cong \Res_{\nabla(L_k \otimes L_n)} \cong \Res_{L_k} \circ \Res_{L_n},
\end{align*}
which categorifies the first two relations of~\eqref{eq:heisenberg-eh-relations}.

\begin{rem} \label{rem:Hecke-presentations}
  While we have chosen to show how Theorem~\ref{theo:categorification} recovers a categorification of the relations~\eqref{eq:heisenberg-eh-relations}, we could just have easily used it to recover categorifications of the relations~\eqref{eq:heisenberg-other-relations}.  This is an illustration of the fact that Theorem~\ref{theo:categorification} does not rely on a particular presentation of the Heisenberg double $\fh(A)$.
\end{rem}

\begin{rem}
  The special case of Theorem~\ref{theo:Fock-space-properties}\eqref{theo-item:Stone-von-Neumann} for the dual pair $(\Sy,\Sy)$ is known as the \emph{Stone--von Neumann Theorem}.
\end{rem}

\begin{rem}
  Since we have $\cK(A) = \cG(A)$, it follows that ${\mathfrak{h}_\pj} = \fh$ in this case (see Definition~\ref{def:p}).
\end{rem}

%
\section{Hecke algebras at roots of unity} \label{sec:Hecke-unity}
%

We now consider Hecke algebras at a root of unity.  Fix $\ell \in \N_+$ and consider the unital $\C$-algebra $A_n$ with generators and relations as in Section~\ref{subsec:sym-functions}, but with $q$ replaced by a fixed $\ell$th root of unity $\zeta$.  By Corollary~\ref{cor:Hecke-like-strong-dualizing}, $A = \bigoplus_{n \in \N} A_n$ is a strong dualizing tower of algebras.  We refer the reader to \cite[\S3.3]{LLT96} for an overview of some of the facts about Grothendieck groups stated in this section.

Let $\mathcal{J}_\ell \subseteq \Sy$ be the ideal generated by the power sum symmetric functions $p_\ell,p_{2\ell},p_{3\ell},\dotsc$, and let $\mathcal{J}_\ell^\perp$ be its orthogonal complement relative to the standard inner product on $\Sy$ (see Section~\ref{subsec:sym-functions}).  Then there are isomorphisms of Hopf algebras
\[
  \cK(A) \cong \mathcal{J}_\ell^\perp \quad \text{and} \quad \cG(A) \cong \Sy/\mathcal{J}_\ell.
\]
Moreover, under these identifications, the inner product between $\cK(A)$ and $\cG(A)$ is that induced by the standard inner product on $\Sy$.

Recall that a partition $\lambda$ is said to be \emph{$\ell$-regular} if each part appears fewer than $\ell$ times.  The specialization $\bar S_\lambda$ of the Specht module $S_\lambda$, $\lambda \in \partition(n)$, to $q = \zeta$ is, in general, no longer an irreducible $A_n$-module.  However, it was shown in \cite[\S6]{DJ86} that if $\lambda$ is $\ell$-regular, then $\bar S_\lambda$ contains a unique maximal submodule $\rad \bar S_\lambda$.  As $\lambda$ varies over the $\ell$-regular partitions of $n$, $D_\lambda := \bar S_\lambda/ \rad \bar S_\lambda$ varies over a complete set of nonisomorphic irreducible representations of $A_n$.  It follows that a basis of $\cG(A)$ (resp.\ $\cK(A)$) is given by the $[D_\lambda]$ (resp.\ $[P_\lambda]$, where $P_\lambda$ is the projective cover of $D_\lambda$) as $\lambda$ varies over the set of $\ell$-regular partitions.  In theory, one could compute the relations in $\fh(A)$ in these bases by using the results of \cite{LLT96} to express the basis elements in terms of the standard symmetric functions and then use the relations in Section~\ref{subsec:classical-h}.  In this way, one would obtain a presentation of $\fh(A)$.  Of course, in general, this presentation would be far from minimal.

In fact, it turns out that $\fh(A)$ is an integral from of the usual Heisenberg algebra $\fh(\Sy,\Sy)$ (see Section~\ref{subsec:classical-h}).  This can be seen as follows.  Recall that the set of power sum functions $p_\lambda$, $\lambda \in \partition$, is an orthogonal basis of $\Sy_\Q=\Q\otimes_\Z\Sy$.  (Throughout we use a subscript $\Q$ to denote extension of scalars to the rational numbers.)  Therefore $\mathcal{J}_{\ell,\Q}$ has a basis given by the set
\[
  \{p_\lambda \ |\  \ell \text{ divides } \lambda_i \text{ for at least one } i \},
\]
and $\mathcal{J}_{\ell,\Q}^\perp$ has a basis
\[
  \{p_\lambda \ |\ \ell \text{ does not divide } \lambda_i \text{ for any } i  \}.
\]
Similarly, $(\Sy/\mathcal{J}_\ell)_\Q$ has a basis
\[
  \{p_\lambda + \mathcal{J}_\ell \ |\ \ell \text{ does not divide } \lambda_i \text{ for any } i  \}.
\]

\begin{rem}
  We see from the above that $\cG(A_n)$ and $\cK(A_n)$ have bases indexed, on the one hand, by the set of $\ell$-regular partitions of $n$ and, on the other hand, by the set of partitions of $n$ in which no part is divisible by $\ell$.  A correspondence between these two sets of partitions is given by Glaisher's Theorem (see, for example, \cite[p.~538]{Le46}).
\end{rem}

For $m \in \N_+$ such that $\ell$ does not divide $m$, let $q_m = p_m + \mathcal{J}_\ell$.  Then we have algebra isomorphisms
\begin{align*}
  \mathcal{J}_{\ell,\Q}^\perp &\cong \Q[p_m\ |\ m \in \N_+,\ \ell \text{ does not divide } m], \quad \text{and} \\
  (\Sy/\mathcal{J}_\ell)_\Q &\cong \Q[q_m\ |\ m \in \N_+,\ \ell \text{ does not divide } m].
\end{align*}
Thus, $\fh(A)_\Q$ is generated by $\{p_m,q_m \ |\  m \in \N_+,\ \ell \text{ does not divide } m \}$ subject to the relations $[p_m,p_n]=[q_m,q_n]=0$ and $[p_m,q_n]=m\delta_{m,n}1$.  It follows that $\fh(A)_\Q$ is isomorphic as an algebra to the classical Heisenberg algebra $\fh(\Sy,\Sy)_\Q$.  Thus we have the following proposition.

\begin{prop} \label{prop:Heis-unity}
  The Heisenberg double associated to the tower of Hecke algebras at a root of unity is an integral form of the classical Heisenberg algebra:
  \[
    \fh(A)_\Q \cong \fh(\Sy,\Sy)_\Q.
  \]
\end{prop}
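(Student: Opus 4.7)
The plan is to exploit the power-sum description of $\cK(A)_\Q$ and $\cG(A)_\Q$ recalled just before the proposition, and then to match the resulting commutation relations in $\fh(A)_\Q$ against the standard power-sum presentation of $\fh(\Sy,\Sy)_\Q$ from Section~\ref{subsec:classical-h}.

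First I would use the identifications $\cK(A) \cong \mathcal{J}_\ell^\perp$ and $\cG(A) \cong \Sy/\mathcal{J}_\ell$ (together with the inner product and Hopf structures inherited from $\Sy$) to obtain, after extending scalars to $\Q$, the polynomial algebra presentations
\[
  \cK(A)_\Q \cong \Q[p_m \mid m \in \N_+,\ \ell \nmid m], \qquad \cG(A)_\Q \cong \Q[q_m \mid m \in \N_+,\ \ell \nmid m],
\]
where $q_m = p_m + \mathcal{J}_\ell$. Since $p_m$ and $q_m$ are both primitive in the respective Hopf algebras (primitivity of $p_m$ in $\Sy$ descends to $\Sy/\mathcal{J}_\ell$ and restricts to $\mathcal{J}_\ell^\perp$), each of $\cK(A)_\Q$ and $\cG(A)_\Q$ is a free commutative polynomial algebra on primitive generators, which gives $[p_m,p_n]=0$ in $\fh(A)_\Q$ and likewise $[q_m,q_n]=0$.

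Next I would compute the mixed commutator $[p_m,q_n]$ in $\fh(A)_\Q$. Using the defining relation~\eqref{eq:fh-key-relation} of the Heisenberg double together with primitivity $\Delta(p_m) = p_m \otimes 1 + 1 \otimes p_m$, one has
\[
  p_m \cdot q_n = \pR{p_m}^*(q_n)\,1 + q_n\, p_m
\]
in $\fh(A)_\Q$, so it suffices to evaluate the left-regular action $\pR{p_m}^*(q_n)$. By Lemma~\ref{lem:left-reg-action} and the fact that the pairing between $\cK(A)$ and $\cG(A)$ is induced from the standard inner product on $\Sy$, this equals $\langle p_m, q_n\rangle = \langle p_m, p_n\rangle_\Sy = m\,\delta_{m,n}$ whenever $\ell$ divides neither $m$ nor $n$. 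Consequently $[p_m,q_n] = m\,\delta_{m,n}\,1$ in $\fh(A)_\Q$.

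Finally I would assemble these relations: the map sending the classical generators $p_n,\,p_n^*\in \fh(\Sy,\Sy)_\Q$ (indexed by $n$ with $\ell\nmid n$) to $q_n,\,p_n \in \fh(A)_\Q$ extends to a $\Q$-algebra homomorphism by the relations just established, and it is an isomorphism because a tensor-product argument combined with the PBW-type decomposition $\fh(A)_\Q \cong \cG(A)_\Q \otimes \cK(A)_\Q$ (from Definition~\ref{def:h}) matches monomial bases on both sides. The main obstacle I anticipate is carefully checking that the mixed commutator computation respects the quotient/subalgebra structure, i.e.\ that the pairing $\langle p_m, q_n\rangle$ really does reduce to $\langle p_m, p_n\rangle_\Sy$ and that no cross terms coming from $\mathcal{J}_\ell$ interfere; but since $p_m \in \mathcal{J}_\ell^\perp$ this annihilates anything in $\mathcal{J}_\ell$ by definition, so the pairing passes cleanly to the quotient and the calculation is clean.
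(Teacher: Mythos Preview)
Your approach mirrors the paper's: use the power-sum presentations of $\cK(A)_\Q$ and $\cG(A)_\Q$, verify the Heisenberg commutation relations $[p_m,q_n]=m\,\delta_{m,n}$ via primitivity and the induced pairing, and conclude. The relation computations are correct, and your check that the pairing passes cleanly to the quotient (because $p_m\in\mathcal{J}_\ell^\perp$) is exactly the point.

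There is one slip in the final assembly step. The classical generators $p_n,\,p_n^*$ of $\fh(\Sy,\Sy)_\Q$ range over \emph{all} $n\in\N_+$ (see Section~\ref{subsec:classical-h}), not only those with $\ell\nmid n$, so your proposed assignment $p_n\mapsto q_n$, $p_n^*\mapsto p_n$ is defined only on a proper subalgebra of $\fh(\Sy,\Sy)_\Q$ and cannot yield an isomorphism as written. The repair --- which the paper also leaves implicit --- is that over $\Q$ any two Heisenberg algebras on countably many oscillator pairs with nonzero central charges are isomorphic: pick a bijection $\sigma\colon\N_+\to\{m\in\N_+:\ell\nmid m\}$ and send $p_n\mapsto q_{\sigma(n)}$, $p_n^*\mapsto \tfrac{n}{\sigma(n)}\,p_{\sigma(n)}$; then $[p_n^*,p_k]\mapsto \tfrac{n}{\sigma(n)}\cdot\sigma(n)\,\delta_{n,k}=n\,\delta_{n,k}$, and the decomposition $\fh\cong H^+\otimes H^-$ of Definition~\ref{def:h} on both sides shows this extends to a bijection of monomial bases. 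With that adjustment your argument is complete and essentially identical to the paper's.
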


By Proposition~\ref{prop:Heis-unity}, as $\ell$ varies over the positive integers, we obtain a family of integral forms of the classical Heisenberg algebra.  It would be interesting to work out minimal presentations of these integral forms over $\Z$.  Furthermore, the Cartan map $\cK(A) \to \cG(A)$ is known to have a nonzero determinant (see \cite[Cor.~1]{BK02}).  Therefore, it induces an isomorphism $\cG_\pj(A)_\Q \cong \cG(A)_\Q$, which implies that ${\mathfrak{h}_\pj}(A)_\Q \cong \fh(A)_\Q$.  It is not known whether ${\mathfrak{h}_\pj}(A) \cong \fh(A)$.

It is known that the category $A\pmd$ yields a categorification of the basic representation of $\widehat{\mathfrak{sl}}_n$ via $i$-induction and $i$-restriction functors (see~\cite[p.~218]{LLT96}).  Theorem~\ref{theo:categorification} provides a categorification of the principle Heisenberg subalgebra of $\widehat{\mathfrak{sl}}_n$.

%
\section{0-Hecke algebras} \label{sec:qsym-nsym}
%

We now specialize the constructions of Sections~\ref{sec:general-construction} and~\ref{sec:dual-pairs-from-towers} to the tower of 0-Hecke algebras of type $A$.  We begin by recalling some basic facts about the rings of quasisymmetric and noncommutative symmetric functions.  We refer the reader to~\cite{LMvW13} for further details.

\subsection{The quasisymmetric functions}

Let $\QS$ be the algebra of \emph{quasisymmetric functions} in the variables $x_1,x_2,\dotsc$ over $\Z$.  Recall that this is the subalgebra of $\Z \llbracket x_1,x_2,\dotsc \rrbracket$ consisting of shift invariant elements.  That is, $f \in \QS$ if and only if, for all $k \in \N_+$, the coefficient in $f$ of the monomial $x_1^{n_1} x_2^{n_2} \dotsm x_k^{n_k}$ is equal to the coefficient of the monomial $x_{i_1}^{n_1} x_{i_2}^{n_2} \dotsm x_{i_k}^{n_k}$ for all strictly increasing sequences of positive integers $i_1 < i_2 < \dotsb < i_k$ and all $n_1,n_2,\dotsc,n_k \in \N$.  The algebra $\QS$ is a graded algebra:
\[ \ts
  \QS = \bigoplus_{n \ge 0} \QS_n,
\]
where $\QS_n$ is the $\Z$-submodule of $\QS$ consisting of homogeneous elements of degree $n$.  We adopt the convention that $\QS_n = 0$ for $n < 0$.

The algebra $\QS$ has a basis consisting of the \emph{monomial quasisymmetric functions} $M_\alpha$, which are indexed by compositions $\alpha=(\alpha_1,\dotsc,\alpha_r) \in \comp$:
\[ \ts
  M_\alpha=\sum_{i_1<\dotsb<i_r}x_{i_1}^{\alpha_1} \dotsm x_{i_r}^{\alpha_r}.
\]
We adopt the convention that $M_\varnothing = 1$.

The algebra $\QS$ has another important basis, the \emph{fundamental quasisymmetric functions} $F_\alpha$, which are defined as follows (see~\cite[\S2]{Ges84}).  For two compositions $\alpha,\beta$, write $\beta \preceq \alpha$ if $\beta$ is a refinement of $\alpha$.  For example, $(1,2,1) \preceq (1,3)$.  Then set
\[ \ts
  F_\alpha=\sum_{\beta \preceq \alpha}M_\beta,\quad \alpha \in \comp.
\]

The algebra $\QS$ is, in fact, a graded connected Hopf algebra.  To describe the coproduct, we introduce a bit of notation relating to compositions.  For two compositions $\alpha=(\alpha_1,\dotsc,\alpha_r)$ and $\beta=(\beta_1,\dotsc,\beta_s)$ let $\alpha\cdot\beta=(\alpha_1,\dotsc,\alpha_r,\beta_1,\dotsc,\beta_s)$ and $\alpha \odot \beta=(\alpha_1,\dotsc,\alpha_r+\beta_1,\dotsc,\beta_s)$.  So, for example, if $\alpha=(1,2,1)$ and $\beta=(3,5)$, then $\alpha\cdot\beta=(1,2,1,3,5)$ and $\alpha\odot\beta=(1,2,4,5)$.  Then the coproduct on $\QS$ is given by either of the two following formulas:
\begin{gather*} \ts
  \Delta(M_\alpha)=\sum_{\alpha=\beta\cdot\gamma}M_\beta\otimes M_\gamma, \\ \ts
  \Delta(F_\alpha)=\sum_{\alpha=\beta\cdot\gamma \text{ or }\alpha=\beta\odot\gamma}F_\beta\otimes F_\gamma.
\end{gather*}

Note that naturally $\Sy \subseteq \QS$.  In particular, the monomial symmetric functions can be handily expressed in terms of the monomial quasisymmetric functions:
\begin{equation} \label{eq:m-M-relation} \ts
  m_\lambda=\sum_{\widetilde{\alpha}=\lambda}M_\alpha, \text{ where $\widetilde{\alpha}$ is the partition obtained by sorting $\alpha$}.
\end{equation}

\subsection{The noncommutative symmetric functions}

Define $\NS$, the algebra of \emph{noncommutative symmetric functions}, to be the free associative algebra (over $\Z$) generated by the alphabet $\bh_1,\bh_2,\dotsc$.  Thus $\NS$ has a basis given by $\bh_\alpha := \bh_{\alpha_1} \dotsm \bh_{\alpha_r}$, $\alpha \in \comp$.  This is a graded algebra:
\[ \ts
  \NS = \bigoplus_{n \ge 0} \NS_n,
\]
where $\NS_n = \Span \{\bh_\alpha \ |\ \alpha \in \comp(n)\}$.  We adopt the convention that $\NS_n = 0$ for $n < 0$.

The \emph{noncommutative ribbon Schur functions} $\br_\alpha$ are defined to be
\[ \ts
  \br_\alpha=\sum_{\alpha \preceq \beta}(-1)^{\ell(\alpha)-\ell(\beta)}\bh_\beta,\quad \alpha \in \comp.
\]
These basis elements multiply nicely:
\[ \ts
  \bh_\alpha\bh_\beta=\bh_{\alpha\cdot\beta} \quad \text{and} \quad \br_\alpha\br_\beta=\br_{\alpha\cdot\beta}+\br_{\alpha\odot\beta}.
\]
In fact, $\NS$ is a graded connected Hopf algebra.  The coproduct is given by the formula
\begin{equation} \label{eq:bh-coproduct} \ts
  \Delta(\bh_n)=\sum_{i=0}^n\bh_i\otimes\bh_{n-i}.
\end{equation}

\subsection{The 0-Hecke algebra and its Grothendieck groups}

Let $\F$ be an arbitrary field and let $A_n$ be the unital $\F$-algebra with generators and relations as in Section~\ref{subsec:sym-functions}, but with $q$ replaced by $0$ (i.e.\ the 0-Hecke algebra).  Consider the tower of algebras $A = \bigoplus_{n \in \N} A_n$.  The irreducible $A_n$-modules are all one-dimensional and are naturally enumerated by the set $\comp(n)$ of compositions of $n$ (see \cite[\S3]{Nor79} and \cite[\S5.2]{KT97}).  Let $L_\alpha$ be the irreducible module corresponding to the composition $\alpha \in \comp(n)$ and let $P_\alpha$ be its projective cover.  We then have (see \cite[Cor.~5.8 and Cor.~5.11]{KT97} -- while the statements there are for the case that $\F = \C$, the proofs remain valid over more general fields)
\begin{gather}
  H^- = \cK(A) \cong \NS,\ [P_\alpha] \mapsto \br_\alpha, \\
  H^+ = \cG(A) \cong \QS,\ [L_\alpha] \mapsto F_\alpha. \label{eq:groth-QS}
\end{gather}
We also have
\[
  \cG_\pj(A) \cong \Sy,
\]
and the Cartan map $\cK(A) \to \cG_\pj(A)$ of Definition~\ref{def:G-proj} corresponds to the projection of Hopf algebras
\begin{equation} \label{eq:chi-def}
  \chi \colon \NS \twoheadrightarrow \Sy,\quad \bh_\alpha \mapsto h_{\widetilde{\alpha}}.
\end{equation}
Alternatively, it is given by $\chi(\br_\alpha)=r_\alpha$, where $r_\alpha$ is the usual ribbon Schur function.
This is a reformulation of \cite[Prop.~5.9]{KT97}.

The bilinear form~\eqref{eq:tower-pairing} becomes the well-known perfect Hopf pairing of the Hopf algebras $\QS$ and $\NS$ given as follows:
\begin{gather*}
  \left<\cdot,\cdot\right> \colon \NS\times \QS \to \Z, \\
  \left<\bh_\alpha,M_\beta \right>=\delta_{\alpha\beta}=\left<\br_\alpha,F_\beta \right>,\quad \alpha,\beta \in \comp.
\end{gather*}
In this way, $(\NS,\QS)$ is a dual pair of Hopf algebras.

\subsection{The quasi-Heisenberg algebra} \label{subsec:quasi-Heisenberg}

We now apply the construction of Section~\ref{subsec:h-definition} to the dual pair $(\QS,\NS)$.

\begin{defin}[(Projective) quasi-Heisenberg algebra]
  We call $\fq := \fh(\QS,\NS)$ the \emph{quasi-Heisenberg algebra}.  We define the \emph{projective quasi-Heisenberg algebra} $\fq_\pj$ to be the subalgebra of $\fq$ generated by $\NS$ and $\Sy \subseteq \QS$ (see Definition~\ref{def:p}).
\end{defin}

\begin{lem} \label{LemmaRelations}
  In $\fq$ we have, for all $\alpha=(\alpha_1,\dotsc,\alpha_r) \in \comp$, $n \in \N_+$,
  \[
    \left[\pR{\bh}_n^*,M_\alpha\right] = M_{(\alpha_1,\dotsc,\alpha_{r-1})}{\pR{\bh}_{n-\alpha_r}^*},
  \]
  with the understanding that $\pR{\bh}_k^*=0$ for $k<0$.
\end{lem}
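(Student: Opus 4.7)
The plan is to directly apply the defining relation of the Heisenberg double from Definition~\ref{def:h}, specialized to $H^+ = \QS$, $H^- = \NS$, together with Lemma~\ref{lem:left-reg-action} which computes the left-regular action explicitly via the pairing. Recall that for $x \in H^-$ and $a \in H^+$ we have $xa = \sum_{(x)} \pR{x_{(1)}}^*(a) \cdot x_{(2)}$ inside $\fh$; identifying $\pR{\bh}_n^*$ with $\bh_n \in \NS \subseteq \fq$ (as is done implicitly in the lemma statement), the computation of the commutator reduces to computing $\pR{\bh_i}^*(M_\alpha)$ for each $i$ and reading off the terms that survive after subtracting $M_\alpha \bh_n$.

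First I would expand, using $\Delta(\bh_n) = \sum_{i=0}^n \bh_i \otimes \bh_{n-i}$ from~\eqref{eq:bh-coproduct}, to obtain
\[
  \bh_n M_\alpha \;=\; \sum_{i=0}^n \pR{\bh_i}^*(M_\alpha)\, \bh_{n-i}.
\]
Next, I would compute $\pR{\bh_i}^*(M_\alpha)$ by invoking Lemma~\ref{lem:left-reg-action}: using $\Delta(M_\alpha) = \sum_{\alpha = \beta \cdot \gamma} M_\beta \otimes M_\gamma$ together with the duality $\langle \bh_\mu, M_\nu \rangle = \delta_{\mu,\nu}$, we get
\[
  \pR{\bh_i}^*(M_\alpha) \;=\; \sum_{\alpha = \beta \cdot \gamma} \langle \bh_i, M_\gamma \rangle\, M_\beta \;=\; \sum_{\alpha = \beta \cdot (i)} M_\beta.
\]
For $\alpha = (\alpha_1, \dotsc, \alpha_r)$, the only way to write $\alpha = \beta \cdot (i)$ is $i = \alpha_r$ and $\beta = (\alpha_1, \dotsc, \alpha_{r-1})$, so $\pR{\bh_i}^*(M_\alpha) = M_{(\alpha_1,\dotsc,\alpha_{r-1})}$ when $i = \alpha_r \geq 1$, and equals $M_\alpha$ when $i = 0$ (since $\bh_0 = 1$), and vanishes otherwise.

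Substituting back, the $i = 0$ term yields $M_\alpha \bh_n$, and at most one further term (from $i = \alpha_r$, present precisely when $\alpha_r \leq n$) survives. Moving $M_\alpha \bh_n$ to the left-hand side then gives the claimed identity, with the convention $\bh_{n - \alpha_r}^* = 0$ for $n < \alpha_r$ handling the case when the surviving term is absent. There is no substantive obstacle here; the only bookkeeping point to watch is distinguishing the $i=0$ term (which produces the commuting piece) from the unique $i = \alpha_r$ term (which produces the correction), and remembering that the identification of $\bh_n \in \NS$ with $\pR{\bh_n}^* \in \End\QS$ is the one provided by the embedding $H^- \hookrightarrow \fh$ in Definition~\ref{def:h}.
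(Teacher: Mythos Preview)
Your proof is correct and essentially the same as the paper's: both expand via the coproduct $\Delta(\bh_n)=\sum_i \bh_i\otimes\bh_{n-i}$ and then compute $\pR{\bh_i}^*(M_\alpha)$ using the coproduct of $M_\alpha$ and the duality $\langle \bh_\mu,M_\nu\rangle=\delta_{\mu\nu}$. The only cosmetic difference is that the paper phrases the argument via Lemma~\ref{lem:adjoint-action-on-product} (an identity of operators on Fock space, with the passage to $\fq$ implicitly using faithfulness), whereas you work directly with the multiplication formula of Definition~\ref{def:h} inside $\fq$; your route is marginally more direct but the content is identical.
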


\begin{proof}
  By Lemma~\ref{lem:adjoint-action-on-product} and~\eqref{eq:bh-coproduct}, we have
  \[ \ts
    \pR{\bh}_n^*(M_\alpha G) = \sum_{i=0}^n\pR{\bh}_i^*(M_\alpha)\pR{\bh}_{n-i}^*(G).
  \]
  So, if $n\geq \alpha_r$, we have $\pR{\bh}_n^*(M_\alpha G) = M_{\alpha} \pR{\bh}_n^*(G) + M_{(\alpha_1,\dotsc,\alpha_{r-1})} \pR{\bh}_{n-\alpha_r}^*(G)$.  The result follows.
\end{proof}

\begin{cor} \label{lem:h-m-comm-rel}
  For $n \in \N$ and $\lambda \in \partition$, we have
  \begin{equation} \ts
    \label{eq:comm-rel-Rh-m} [\pR{\bh}_n^*, m_\lambda] = \sum_{j=1}^n m_{\lambda - j} \pR{\bh}_{n-j}^*,
  \end{equation}
  where $\lambda - j$ is equal to the partition obtained from removing a part $j$ from $\lambda$ if $\lambda$ has such a part and $m_{\lambda-j}$ is defined to be zero otherwise.  In particular, for $n,k \in \N$, we have
  \[ \ts
    [\Rh_n^*, p_k] = \Rh_{n-k}^*,\quad [\Rh_n^*, e_k] = e_{k-1} \Rh_{n-1}^*,\quad [\Rh_n^*, h_k] = \sum_{j=1}^n h_{k-j} \Rh_{n-j}^*.
  \]
\end{cor}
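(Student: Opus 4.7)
The plan is to reduce the claim to Lemma~\ref{LemmaRelations} via the identity~\eqref{eq:m-M-relation}, which expresses each monomial symmetric function as a sum of monomial quasisymmetric functions, and then to repackage the resulting sum by a simple bijection on compositions.

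First I would write
\[
[\pR{\bh}_n^*, m_\lambda] = \sum_{\widetilde{\alpha} = \lambda} [\pR{\bh}_n^*, M_\alpha] = \sum_{\widetilde{\alpha} = \lambda} M_{(\alpha_1, \dotsc, \alpha_{r-1})}\, \pR{\bh}_{n-\alpha_r}^*,
\]
using Lemma~\ref{LemmaRelations} for the second equality, and then regroup the sum according to the value $j = \alpha_r$ of the last part of $\alpha$. The key combinatorial observation will be that, for each positive integer $j$ appearing as a part of $\lambda$, the truncation map $(\alpha_1,\dotsc,\alpha_r) \mapsto (\alpha_1,\dotsc,\alpha_{r-1})$ is a bijection from $\{\alpha \in \comp : \widetilde{\alpha} = \lambda,\ \alpha_r = j\}$ onto $\{\beta \in \comp : \widetilde{\beta} = \lambda - j\}$; if $j$ is not a part of $\lambda$ then both sets are empty, which matches the convention that $m_{\lambda - j} = 0$. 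Applying~\eqref{eq:m-M-relation} in the reverse direction, the inner sum after grouping equals $m_{\lambda - j}$, giving~\eqref{eq:comm-rel-Rh-m}. The restriction to $1 \le j \le n$ is automatic because $\pR{\bh}_k^* = 0$ for $k < 0$.

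For the three special cases, I would use $p_k = m_{(k)}$ and $e_k = m_{(1^k)}$, which immediately yield $[\Rh_n^*, p_k] = \Rh_{n-k}^*$ and $[\Rh_n^*, e_k] = e_{k-1}\Rh_{n-1}^*$ (noting that $m_\varnothing = 1$ and $(1^k) - 1 = (1^{k-1})$). For $h_k$, I would start from $h_k = \sum_{\lambda \in \partition(k)} m_\lambda$ and invoke the bijection $\mu \leftrightarrow \mu \cup \{j\}$ between $\partition(k-j)$ and $\{\lambda \in \partition(k) : j \text{ is a part of } \lambda\}$ to rewrite $\sum_{\lambda \in \partition(k)} m_{\lambda - j}$ as $\sum_{\mu \in \partition(k-j)} m_\mu = h_{k-j}$, obtaining $[\Rh_n^*, h_k] = \sum_{j=1}^n h_{k-j}\Rh_{n-j}^*$.

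There is no real obstacle here; the whole argument is a careful bookkeeping exercise once Lemma~\ref{LemmaRelations} is in hand. The most delicate point will be phrasing the combinatorial bijections so that the convention $m_{\lambda - j} = 0$ when $j$ is not a part of $\lambda$ falls out automatically, rather than being imposed by hand at each step.
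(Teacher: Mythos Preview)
Your proposal is correct and follows exactly the approach the paper takes: the paper's proof simply says that~\eqref{eq:comm-rel-Rh-m} follows from~\eqref{eq:m-M-relation} and Lemma~\ref{LemmaRelations}, and that the special cases follow by expressing $p_k$, $e_k$, $h_k$ in terms of the $m_\lambda$. You have carefully supplied the combinatorial bijections that the paper leaves implicit, but the strategy is identical.
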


\begin{proof}
  Equation~\eqref{eq:comm-rel-Rh-m} follows from~\eqref{eq:m-M-relation} and Lemma~\ref{LemmaRelations}.  The remainder of the relations then follow by expressing $p_k$, $e_k$ and $h_k$ in terms of the monomial symmetric functions $m_\lambda$.
\end{proof}

\begin{cor} \label{cor:q-Mgenerators}
  The quasi-Heisenberg algebra $\fq$ is generated by the set
  \[
    \{M_\alpha,\pR{\bh}_n^*\ |\ \alpha \in \comp,\ n \in \N_+\}.
  \]
  The $M_\alpha$ multiply as in $\QS$ (for a precise description of this product, see \cite[\S3.3.1]{LMvW13}) and
  \begin{gather*} \ts
    \left[\pR{\bh}_n^*,M_\alpha\right] = M_{(\alpha_1,\dotsc,\alpha_{r-1})}\pR{\bh}_{n-\alpha_r}^*,\quad \alpha = (\alpha_1,\dotsc,\alpha_r) \in \comp,\ n \in \N_+.
  \end{gather*}
\end{cor}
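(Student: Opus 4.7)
The plan is to combine three facts: the $\kk$-module decomposition of $\fq$, a generation statement for each tensor factor, and the cross relation already established in Lemma~\ref{LemmaRelations}.

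First, I would recall from Definition~\ref{def:h} that, as a $\kk$-module, $\fq = \fh(\QS,\NS) \cong \QS \otimes \NS$, and that under the embeddings $a \mapsto a\#1$ and $x \mapsto 1\#x$ both $\QS$ and $\NS$ become subalgebras of $\fq$ with $ax = a\#x$. Consequently, any generating set for $\QS$ (as an algebra) together with any generating set for $\NS$ (as an algebra) will generate $\fq$ as an algebra, since products of the form $a \cdot x$ already exhaust $\QS \otimes \NS$.

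Second, I would observe that $\{M_\alpha \mid \alpha \in \comp\}$ is a $\Z$-basis of $\QS$, so these elements certainly generate $\QS$ as an algebra. For $\NS$, by the definition given in the paper, $\NS$ is the free associative algebra on $\bh_1,\bh_2,\dotsc$, so $\{\bh_n \mid n \in \N_+\}$ is a generating set; translating through the identification $\NS = \cK(A) \hookrightarrow \fq$ used to define the left-regular action, $\{\pR{\bh}_n^* \mid n \in \N_+\}$ correspondingly generates the subalgebra $\NS$ of $\fq$. Together these two generating sets therefore generate all of $\fq$.

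Third, for the displayed relations: the first assertion ("the $M_\alpha$ multiply as in $\QS$") is immediate because $\QS$ is a subalgebra of $\fq$, and the explicit stuffle formula is the one recalled in \cite[\S3.3.1]{LMvW13}. The second assertion is exactly the content of Lemma~\ref{LemmaRelations}, applied within $\fq$ with $\bh_n$ and $M_\alpha$ regarded as elements of $\fq$ via the subalgebra embeddings. Since no further relations are asserted beyond generation and these two families, there is no obstacle here — the statement is essentially a packaging of what has already been proved. The only mild subtlety worth flagging is the convention $\pR{\bh}_{n-\alpha_r}^* = 0$ when $n < \alpha_r$, which is consistent with the grading on $\NS$ and with the convention in Lemma~\ref{LemmaRelations}; this needs to be stated explicitly but requires no further argument.
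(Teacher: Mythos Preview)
Your proposal is correct and matches the paper's (implicit) approach: the corollary is stated without proof in the paper, as an immediate consequence of the $\kk$-module decomposition $\fq \cong \QS \otimes \NS$ from Definition~\ref{def:h} together with Lemma~\ref{LemmaRelations}, and your write-up simply unpacks this. Your observation that the statement asserts only generation plus certain relations (not a presentation) is exactly right and disposes of the only possible worry.
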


\begin{rem} \label{rem:q-Fgenerators}
  We also note that the algebra $\fq$ has generators
  \[
    \{F_\alpha,\pR{\bh}_n^*\ |\ \alpha \in \comp,\ n \in \N_+\}.
  \]
  From a representation theoretic point of view, these are more natural since the $F_\alpha$ correspond to simple $A_n$-modules (see~\eqref{eq:groth-QS}).  The $F_\alpha$ then multiply as in $\QS$ (for a precise description of this product, see \cite[\S3.3.1]{LMvW13}) and
  \begin{equation} \label{eq:0-Hecke-simple-relation} \ts
    \left[ \pR{\bh}_n^*,F_\alpha \right] = \sum_{i=1}^{\alpha_r} F_{(\alpha_1,\dotsc,\alpha_{r}-i)}{\pR{\bh}_{n-i}^*},\quad \alpha = (\alpha_1,\dotsc,\alpha_r) \in \comp,\ n \in \N_+.
  \end{equation}
\end{rem}

\begin{rem}
  Note that the above presentations are far from minimal.  There are polynomial generators of $\QS$, enumerated by elementary Lyndon words (see \cite[Th.~6.7.5]{HGK10}), which one could use instead of the $M_\alpha$ in the above presentation.  This would result in a minimal presentation of $\fq$.
\end{rem}

The following result gives a presentation of the projective quasi-Heisenberg algebra in terms of generators and relations.

\begin{prop} \label{prop:p-presentation}
  The algebra $\fq_\pj$ is generated by the set
  \[
    \{e_n, \pR{\bh}_n^*\ |\ n \in \N\}.
  \]
  The relations are
  \[
    [e_n, e_k] = 0,\quad [\Rh_n^*, e_k] = e_{k-1} \Rh_{n-1}^*,\quad n,k \in \N.
  \]
\end{prop}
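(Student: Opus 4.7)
The plan is to establish the presentation by combining the $\kk$-module description $\fq_\pj \cong \Sy \otimes \NS$ with a Poincar\'e--Birkhoff--Witt style normal-form argument.

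First I will check generation and verify the relations. The $e_n$ generate $\Sy$ as a commutative algebra, and the $\pR{\bh}_n^*$ generate $\NS$ (recall from Definition~\ref{def:h} that $\NS$ sits inside $\fq$ via $x \mapsto 1 \# x$, and Theorem~\ref{theo:Fock-space-properties}\eqref{theo-item:Fock-space-faithful} lets us identify this embedding with the left-regular action). Since $\fq_\pj = \Sy \# \NS$ as a $\Z$-module, these elements generate $\fq_\pj$. The relation $[e_n,e_k]=0$ holds because $\Sy$ is commutative, while $[\Rh_n^*,e_k]=e_{k-1}\Rh_{n-1}^*$ is precisely the specialization recorded in the ``in particular'' portion of Corollary~\ref{lem:h-m-comm-rel}.

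To show that these relations suffice, let $\mathfrak{A}$ be the abstract unital associative $\Z$-algebra on generators $\{e_n, \Rh_n^*\}_{n\in\N}$ modulo the stated relations, and let $\pi\colon \mathfrak{A}\twoheadrightarrow \fq_\pj$ be the canonical surjection. The goal is to produce a spanning set of $\mathfrak{A}$ which $\pi$ sends bijectively to a known basis of $\fq_\pj$. Concretely, I will argue that every element of $\mathfrak{A}$ is a $\Z$-linear combination of monomials of the form $e_\mu \Rh_\alpha^* := e_{\mu_1}e_{\mu_2}\dotsm\, \Rh_{\alpha_1}^*\Rh_{\alpha_2}^*\dotsm$ with $\mu\in\partition$ and $\alpha\in\comp$. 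Granting this, the images $e_\mu \# \bh_\alpha$ form a $\Z$-basis of $\Sy\otimes\NS = \fq_\pj$, so $\pi$ is an isomorphism.

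The main technical step, and where I expect the only real work, is the normal-form argument. The natural complexity measure is the number of ordered adjacent pairs $\Rh_n^* e_k$ appearing in a monomial. The defining relation replaces any occurrence
\[
 \cdots \Rh_n^* e_k \cdots \;\longmapsto\; \cdots e_k \Rh_n^* \cdots + \cdots e_{k-1} \Rh_{n-1}^* \cdots,
\]
and in each summand one $\Rh$ has moved past one $e$, so the count strictly decreases (both summands keep total degree $k-n$, so no blow-up occurs). By induction on this statistic every monomial reduces to a combination of words with all $e$'s to the left of all $\Rh^*$'s, and the commutativity relation then sorts the $e$'s to produce a partition index $\mu$. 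This yields the claimed spanning set and completes the proof.
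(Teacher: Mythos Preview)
Your proof is correct and follows the same underlying logic as the paper, but the paper's own proof is a one-liner: ``This follows immediately from the definition of $\fq_\pj$ and Corollary~\ref{lem:h-m-comm-rel}.'' The paper is leaning on Definition~\ref{def:p}, which already gives $\fq_\pj = \Sy \# \NS \cong \Sy \otimes \NS$ as a $\Z$-module with basis $\{e_\mu \# \bh_\alpha\}$; once that is in hand, the only content is identifying the cross relation, which is exactly Corollary~\ref{lem:h-m-comm-rel}. Your normal-form argument is precisely what justifies the word ``immediately'': it shows that the abstract algebra on the stated generators and relations is spanned by the monomials $e_\mu \Rh_\alpha^*$, whose images are the known basis elements. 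So you have made explicit the PBW step that the paper treats as evident from the smash-product description.
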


\begin{proof}
  This follows immediately from the definition of $\fq_\pj$ and Corollary~\ref{lem:h-m-comm-rel}.
\end{proof}

\begin{rem} \label{rem:fp-to-fh-map}
  Note the similarity of the presentation of Proposition~\ref{prop:p-presentation} to the presentation of the usual Heisenberg algebra $\fh(\Sy,\Sy)$ given in~\eqref{eq:heisenberg-eh-relations}.  The only difference is that the $h_n^*$ commute, whereas the $\pR{\bh}_n^*$ do not.  There is a natural surjective map of algebras $\fq_\pj \to \fh(\Sy,\Sy)$ given by $e_n \mapsto e_n$, $\pR{\bh}_n^* \mapsto h_n^*$, $n \in \N_+$.
\end{rem}

\subsection{Fock spaces and categorification} \label{subsec:quasi-Fock-spaces}

As described in Section~\ref{subsec:general-Fock-space}, the quasi-Heisenberg algebra $\fq$ acts naturally on $\QS$ and we call this the \emph{lowest weight Fock space representation} of $\fq$.  By Theorem~\ref{theo:Fock-space-properties}\eqref{theo-item:Stone-von-Neumann}, any representation of $\fq$ generated by a lowest weight vacuum vector is isomorphic to $\QS$.

Similarly, as in Definition~\ref{def:p-Fock-space}, the projective quasi-Heisenberg algebra $\fq_\pj$ acts naturally on $\Sy$ and we call this the \emph{lowest weight Fock space representation} of $\fq_\pj$.  As a $\fq_\pj$-module, $\Sy$ is generated by the lowest weight vacuum vector $1 \in \Sy$.  By Proposition~\ref{prop:p-Fock-space-properties}\eqref{prop-item:p-Stone-von-Neumann}, any representation of $\fq_\pj$ generated by a lowest weight vacuum vector is isomorphic to $\Sy$.  However, this representation is not faithful since it factors through the projection from $\fq_\pj$ to the usual Heisenberg algebra (see Remark~\ref{rem:fp-to-fh-map}).  On the other hand, the highest weight Fock space representation of $\fq_\pj$ is faithful (see Proposition~\ref{prop:hw-p-FS-faithful}).

By Remark~\ref{rem:Hecke-like-general-field}, $A$ is a strong dualizing tower of algebras.  Therefore, Theorem~\ref{theo:categorification} yields a categorification of the Fock space representations of $\fq$ and $\fq_\pj$.  For instance, it is straightforward to verify that
\begin{gather*} \ts
  \Delta(L_\alpha) \cong \bigoplus_{\alpha=\beta\cdot\gamma \text{ or }\alpha=\beta\odot\gamma} L_\beta \otimes L_\gamma \quad \text{for all } \alpha \in \comp, \\ \ts
  \Psi^{\otimes 2} \Delta \Psi^{-1}(P_{(n)}) \cong \Delta(P_{(n)}) \cong \bigoplus_{i=0}^n P_{(i)} \otimes P_{(n-i)} \quad \text{for all } n \in \N.
\end{gather*}
For $\alpha = (\alpha_1,\dotsc,\alpha_r) \in \comp$ and $i \in \{0,\dotsc,\alpha_r\}$, it follows that $\Res_{P_{(i)}} L_\alpha = L_{(\alpha_1,\dotsc,\alpha_r-i)}$.  Thus we have
\[ \ts
  \Res_{P_{(n)}} \circ \Ind_{L_\alpha} \cong \nabla \left( \bigoplus_{i=0}^n \Res_{P_{(i)} \otimes P_{(n-i)}} (L_\alpha \otimes -) \right) \cong \bigoplus_{i=0}^{\alpha_r} \Ind_{L_{(\alpha_1,\dotsc,\alpha_r-i})} \Res_{P_{(n-i)}},
\]
which is a categorification of the relation~\eqref{eq:0-Hecke-simple-relation}.  The categorification of the multiplication of the elements $F_\alpha$, $\alpha \in \comp$, follows from the computation of the induction in $A\md$ (see, for example, the proof of \cite[Prop.~4.15]{DKKT97}).

%
\section{\texorpdfstring{Application: $\QS$ is free over $\Sy$}{Application: QS is free over Sym}}
%

As a final application of the methods of the current paper, we use the generalized Stone--von Neumann Theorem for $\fq_\pj$ (Proposition~\ref{prop:p-Fock-space-properties}) to prove that $\QS$ is free over $\Sy$.  This gives a proof that is quite different from the one previously appearing in the literature.  The previous proof proceeds by constructing a free commutative polynomial basis of $\QS$ enumerated by elementary Lyndon words (see~\cite[Thm~6.7.5]{HGK10}).  Then the freeness of $\QS$ over $\Sy$ follows from the fact that the polynomial basis contains the elementary symmetric functions.  The construction of this polynomial basis turned out to be rather difficult, and many false proofs appeared in the literature.  We refer the reader to \cite{Haz01a,Haz01b} for more on the history of this result.

\begin{lem} \label{lem:fp-comp-reducible}
  Suppose $V$ is a $\fq_\pj$-module which is generated (as a $\fq_\pj$-module) by a finite set of lowest weight vacuum vectors.  Then $V$ is a direct sum of copies of lowest weight Fock space.
\end{lem}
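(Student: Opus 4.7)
The plan is to realize $V$ as the image of a natural surjection from $\cF_\pj^{\oplus k}$ and then to prove injectivity by producing a vacuum vector inside any nonzero element of the kernel.

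Let $v_1,\ldots,v_k$ denote the given lowest weight vacuum vectors generating $V$. Using the identification $\cF_\pj \cong \Ind_{H^-}^{\fq_\pj}\Z^-$ from Proposition~\ref{prop:p-Fock-space-properties}(b) and the fact that $H^-_{>0} v_i = 0$ for each $i$, Frobenius reciprocity immediately yields a $\fq_\pj$-module homomorphism
\[
  \phi \colon \cF_\pj^{\oplus k} \to V, \qquad e_i \mapsto v_i,
\]
where $e_i$ is the canonical vacuum generator of the $i$-th summand. Since the $v_i$ generate $V$, this $\phi$ is surjective.

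The key technical step is the auxiliary claim that every nonzero submodule of $\cF_\pj^{\oplus k}$ contains a nonzero vacuum vector, i.e., a nonzero element of the degree-zero subspace $\Z^k \subseteq (H^+_\pj)^{\oplus k}$. Given $0 \neq m = (m_1,\ldots,m_k)$, I would let $n$ be the largest degree for which some homogeneous component $m_{i,n}$ is nonzero, and invoke nondegeneracy of the Hopf pairing $H^- \times H^+ \to \Z$ to pick $x \in H^-_n$ with $\langle x, m_{i,n}\rangle \neq 0$. Since the $x$-action lowers degree componentwise by $n$, the element $x \cdot m$ lies in $\Z^k$ and its $i$-th coordinate is the nonzero scalar $\langle x, m_{i,n}\rangle$. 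This is the direct-sum analogue of the argument in the proof of Theorem~\ref{theo:Fock-space-properties}(a).

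Applied to $K = \ker \phi$: if $K \neq 0$, it contains some $(c_1,\ldots,c_k) \in \Z^k \setminus \{0\}$, yielding a nontrivial relation $\sum_i c_i v_i = 0$ in $V$. To rule this out I would replace the given set of generators by a $\Z$-linearly independent collection of lowest weight vacuum vectors with the same $\Z$-span in $V$; the corresponding map $\phi$ is then injective on $\Z^k$, hence injective on all of $\cF_\pj^{\oplus k}$ by the auxiliary claim, and $V \cong \cF_\pj^{\oplus r}$ for the refined rank $r$. The hard part will be this final reduction: one must verify that the $\Z$-span of $v_1,\ldots,v_k$ in $V$ is a free abelian group, so that a $\Z$-basis of it provides an alternative generating set whose elements still satisfy the lowest weight condition $\Z v \cong \Z$.
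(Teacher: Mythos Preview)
Your overall strategy is sound and runs parallel to the paper's, and the step you single out as ``the hard part'' is exactly where both arguments break down. In fact the lemma is false as stated. Take $V = (\Sy/2\Sy) \oplus \Sy$ and set $v_1 = (\bar 1, 1)$, $v_2 = (0,1)$. Each $v_i$ lies in degree zero, is annihilated by $H^-_{>0}$, and has $\Z v_i \cong \Z$ (the second coordinate already forces infinite order), so both are lowest weight vacuum vectors; moreover $\fq_\pj v_1 = \{(\bar f,f):f\in\Sy\}$ and $\fq_\pj v_2 = 0\oplus\Sy$ together span $V$. But $V$ has $2$-torsion, so it cannot be a direct sum of copies of $\cF_\pj\cong\Sy$. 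In this example the $\Z$-span of $v_1,v_2$ is $\Z/2\Z\oplus\Z$, which is not free, and your proposed reduction is impossible. The paper's proof breaks at the corresponding point: from $n_j w = m v_i$ it infers $\tfrac{n_j}{m}\, w = v_i$, which is an illicit cancellation of $m$ in a module that may have $m$-torsion (and does, in the example above, where $2v_1 = 2v_2$ but $v_1 \ne v_2$).

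Both your argument and the paper's become correct once one adds the hypothesis that $V$ is torsion-free as an abelian group. Under that assumption the $\Z$-span of the $v_i$ is a finitely generated torsion-free abelian group, hence free; any $\Z$-basis of it consists of vacuum vectors (nonzero elements of infinite order annihilated by $H^-_{>0}$), and your refined map $\phi$ is then injective on degree zero, hence injective everywhere by your auxiliary claim. So your outline is complete once the missing hypothesis is inserted; you should also note that this hypothesis must then be checked separately in the application to Proposition~\ref{prop:qsym-free-over-sym}.
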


\begin{proof}
  Let $\{v_i\}_{i \in I}$ denote a set of lowest weight vacuum vectors that generates $V$ and such that $I$ has minimal cardinality.  We claim that
  \begin{equation} \label{eq:vi-vj-overlap}
    \Z v_i \cap \Z v_j = \{0\} \quad \text{for all } i \ne j.
  \end{equation}
  Suppose, on the contrary, that $\Z v_i \cap \Z v_j \ne \{0\}$ for some $i \ne j$.  Then $n_i v_i = n_j v_j$ for some $n_i,n_j \in \Z$.  Let $m = \gcd(n_i,n_j)$ and choose $a_i,a_j \in \Z$ such that $m = a_i n_i + a_j n_j$.  Set $w = a_jv_i + a_iv_j$.  Then $w$ is clearly a lowest weight vacuum vector, and we have
  \[
    \frac{n_i}{m} w = \frac{1}{m}(a_j n_iv_i + a_i n_iv_j)=  \frac{1}{m}(a_j n_jv_j + a_i n_iv_j) = v_j.
  \]
  Similarly, $\frac{n_j}{m}w = v_i$.  Thus $\{v_k\}_{k \in I \setminus \{i,j\}} \cup \{w\}$ is a set of lowest weight vacuum vectors that generates $V$, contradicting the minimality of the cardinality of $I$.

  By Proposition~\ref{prop:p-Fock-space-properties}\eqref{prop-item:p-Stone-von-Neumann}, $\fq_\pj \cdot v_i \cong \Sy$ as $\fq_\pj$-modules.  It then follows from Proposition~\ref{prop:p-Fock-space-properties}\eqref{prop-item:p-Fock-space-subreps} and~\eqref{eq:vi-vj-overlap} that $\fq_\pj \cdot v_i \cap \fq_\pj \cdot v_j = \{0\}$ for $i \ne j$.  The lemma follows.
\end{proof}

Define an increasing filtration of $\fq_\pj$-submodules of $\QS$ as follows.  For $n \in \N$, let
\[ \ts
  \QS^{(n)} := \sum_{\ell(\alpha) \leq n}\fq_\pj\cdot M_\alpha.
\]
In particular, note that $\QS^{(0)}=\Sy$.  We adopt the convention that $\QS^{(-1)} = \{0\}$.

\begin{prop} \label{prop:qsym-free-over-sym}
  The space $\QS$ of quasisymmetric functions is free as a $\Sy$-module.
\end{prop}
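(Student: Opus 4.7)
The plan is to exploit the filtration $\{\QS^{(n)}\}_{n \ge 0}$ of $\fq_\pj$-submodules, aiming to show that each subquotient $V_n := \QS^{(n)}/\QS^{(n-1)}$ is a free $\Sy$-module and then splitting the resulting exact sequences. Note that $\QS^{(0)} = \fq_\pj \cdot 1 = \Sy$ (since $\pR{\bh}_m^*(1) = 0$ for all $m \in \N_+$) and $\QS = \bigcup_n \QS^{(n)}$ (since $M_\alpha \in \QS^{(\ell(\alpha))}$). Moreover, viewed as a $\Sy$-module through the inclusion $\Sy \subseteq \fq_\pj$, the lowest weight Fock space of $\fq_\pj$ is just $\Sy$ acting on itself by left multiplication, hence free of rank one.

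The heart of the argument is the identification of vacuum vectors in $V_n$. By definition, the images $\overline{M_\alpha}$ for $\ell(\alpha) = n$ generate $V_n$ as a $\fq_\pj$-module. Applying Lemma~\ref{lem:left-reg-action} together with the coproduct formula $\Delta(M_\alpha) = \sum_{\alpha = \beta \cdot \gamma} M_\beta \otimes M_\gamma$ and the pairing $\langle \bh_m, M_\gamma \rangle = \delta_{(m),\gamma}$ yields
\[
\pR{\bh}_m^*(M_\alpha) = \delta_{m,\alpha_r}\, M_{(\alpha_1, \dotsc, \alpha_{r-1})}
\]
for $m \in \N_+$ and $\alpha = (\alpha_1, \dotsc, \alpha_r)$; when $\ell(\alpha) = n$ the right-hand side lies in $\QS^{(n-1)}$, so $\overline{M_\alpha}$ is a lowest weight vacuum vector of $V_n$.

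Since the generating set $\{\overline{M_\alpha} : \ell(\alpha) = n\}$ is infinite, Lemma~\ref{lem:fp-comp-reducible} does not apply directly; this is the main technical obstacle. To circumvent it, I use the grading on $\QS$: for each $d$ there are only finitely many $\alpha$ with $\ell(\alpha) = n$ and $|\alpha| = d$, so the sub-filtration $V_n^{(\le d)} \subseteq V_n$ generated by those $\overline{M_\alpha}$ with $|\alpha| \le d$ has successive quotients generated by finitely many vacuum vectors. Lemma~\ref{lem:fp-comp-reducible} then realizes each such quotient as a direct sum of copies of Fock space, free over $\Sy$. Iteratively splitting the resulting short exact sequences of free (hence projective) $\Sy$-modules shows that each $V_n^{(\le d)}$, and therefore $V_n$, is free over $\Sy$. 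Running the same splitting argument on $\QS^{(0)} \subseteq \QS^{(1)} \subseteq \dotsb$ finally yields $\QS \cong \bigoplus_{n \ge 0} V_n$ as $\Sy$-modules, which is free.
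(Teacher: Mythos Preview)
Your proof is correct and follows the same strategy as the paper: filter $\QS$ by the $\fq_\pj$-submodules $\QS^{(n)}$, observe that each subquotient $V_n$ is generated by the lowest weight vacuum vectors $\overline{M_\alpha}$ with $\ell(\alpha)=n$, invoke Lemma~\ref{lem:fp-comp-reducible} to realize $V_n$ as a direct sum of copies of Fock space (hence free over $\Sy$), and then split the resulting short exact sequences to assemble nested $\Sy$-bases.

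The one place you diverge from the paper is in the treatment of the finiteness hypothesis of Lemma~\ref{lem:fp-comp-reducible}. The paper simply applies that lemma to $V_n$ without comment, even though the generating set $\{\overline{M_\alpha}:\ell(\alpha)=n\}$ is infinite; you correctly flag this and insert the degree sub-filtration $V_n^{(\le d)}$ so that each successive quotient is generated by finitely many vacuum vectors, to which the lemma applies verbatim. This is a small but genuine gain in rigor over the paper's argument; apart from this refinement the two proofs coincide.
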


\begin{proof}
  Note that, for $\alpha \in \comp$ such that $\ell(\alpha)=n$, we have $\pR{\bh}_m^*(M_\alpha) \in \QS^{(n-1)}$ for any $m>0$.  Therefore, in the quotient $V_n=\QS^{(n)}/\QS^{(n-1)}$, such $M_\alpha$ are lowest weight vacuum vectors.  It is clear that these vectors generate $V_n$, and therefore, by Lemma~\ref{lem:fp-comp-reducible},
  \[ \ts
    V_n = \bigoplus_{v \in \mathcal{S}_n}\Sy\cdot v,
  \]
  where $\mathcal{S}_n$ is some collection of vacuum vectors in $V_n$.

  Consider the short exact sequence
  \[
    0 \to \QS^{(n-1)} \to \QS^{(n)} \to V_n \to 0.
  \]
  Since $V_n$ is a free (hence projective) $\Sy$-module, the above sequence splits.   Therefore, if $\QS^{(n-1)}$ is free over $\Sy$, then so is $\QS^{(n)}$.

  By the argument in the previous paragraph we can choose nested sets of vectors in $\QS$
  \[
    \widetilde{\mathcal{S}}_0 \subseteq \widetilde{\mathcal{S}}_1 \subseteq \widetilde{\mathcal{S}}_2 \subseteq \dotsb
  \]
  such that, for every $n \in \N$, we have $\QS^{(n)} = \bigoplus_{\tilde{v} \in \widetilde{\mathcal{S}}_n}\Sy\cdot \tilde v$.  Let $\widetilde{\mathcal{S}} = \bigcup_{n \in \N} \widetilde{\mathcal{S}}_n$.  Then
  \[ \ts
    \QS = \bigoplus_{v \in \widetilde{\mathcal{S}}}\Sy\cdot v.  \qedhere
  \]
\end{proof}


\bibliographystyle{alpha}
\bibliography{Heisenberg-double-biblist}

\def\cprime{$'$}
\begin{thebibliography}{DKKT97}

\bibitem[ARS95]{ARS95}
Maurice Auslander, Idun Reiten, and Sverre~O. Smal{\o}.
\newblock {\em Representation theory of {A}rtin algebras}, volume~36 of {\em
  Cambridge Studies in Advanced Mathematics}.
\newblock Cambridge University Press, Cambridge, 1995.

\bibitem[BK02]{BK02}
Jonathan Brundan and Alexander Kleshchev.
\newblock Cartan determinants and {S}hapovalov forms.
\newblock {\em Math. Ann.}, 324(3):431--449, 2002.

\bibitem[BL09]{BL09}
Nantel Bergeron and Huilan Li.
\newblock Algebraic structures on {G}rothendieck groups of a tower of algebras.
\newblock {\em J. Algebra}, 321(8):2068--2084, 2009.

\bibitem[BLL12]{BLL12}
Nantel Bergeron, Thomas Lam, and Huilan Li.
\newblock Combinatorial {H}opf algebras and towers of algebras---dimension,
  quantization and functorality.
\newblock {\em Algebr. Represent. Theory}, 15(4):675--696, 2012.

\bibitem[CL12]{CL12}
Sabin Cautis and Anthony Licata.
\newblock Heisenberg categorification and {H}ilbert schemes.
\newblock {\em Duke Math. J.}, 161(13):2469--2547, 2012.

\bibitem[CR81]{CR81}
Charles~W. Curtis and Irving Reiner.
\newblock {\em Methods of representation theory. {V}ol. {I}}.
\newblock John Wiley \& Sons Inc., New York, 1981.

\bibitem[DJ86]{DJ86}
Richard Dipper and Gordon James.
\newblock Representations of {H}ecke algebras of general linear groups.
\newblock {\em Proc. London Math. Soc. (3)}, 52(1):20--52, 1986.

\bibitem[DKKT97]{DKKT97}
G{\'e}rard Duchamp, Alexander Klyachko, Daniel Krob, and Jean-Yves Thibon.
\newblock Noncommutative symmetric functions. {III}. {D}eformations of {C}auchy
  and convolution algebras.
\newblock {\em Discrete Math. Theor. Comput. Sci.}, 1(1):159--216, 1997.
\newblock Lie computations (Marseille, 1994).

\bibitem[FJW00]{FJW00}
Igor~B. Frenkel, Naihuan Jing, and Weiqiang Wang.
\newblock Vertex representations via finite groups and the {M}c{K}ay
  correspondence.
\newblock {\em Internat. Math. Res. Notices}, (4):195--222, 2000.

\bibitem[Gei77]{Gei77}
Ladnor Geissinger.
\newblock Hopf algebras of symmetric functions and class functions.
\newblock In {\em Combinatoire et repr\'esentation du groupe sym\'etrique
  ({A}ctes {T}able {R}onde {C}.{N}.{R}.{S}., {U}niv. {L}ouis-{P}asteur
  {S}trasbourg, {S}trasbourg, 1976)}, pages 168--181. Lecture Notes in Math.,
  Vol. 579. Springer, Berlin, 1977.

\bibitem[Ges84]{Ges84}
Ira~M. Gessel.
\newblock Multipartite {$P$}-partitions and inner products of skew {S}chur
  functions.
\newblock In {\em Combinatorics and algebra ({B}oulder, {C}olo., 1983)},
  volume~34 of {\em Contemp. Math.}, pages 289--317. Amer. Math. Soc.,
  Providence, RI, 1984.

\bibitem[Haz01a]{Haz01a}
Michiel Hazewinkel.
\newblock The algebra of quasi-symmetric functions is free over the integers.
\newblock {\em Adv. Math.}, 164(2):283--300, 2001.

\bibitem[Haz01b]{Haz01b}
Michiel Hazewinkel.
\newblock Generalized overlapping shuffle algebras.
\newblock {\em J. Math. Sci. (New York)}, 106(4):3168--3186, 2001.
\newblock Pontryagin Conference, 8, Algebra (Moscow, 1998).

\bibitem[Haz08]{Hand08}
M.~Hazewinkel, editor.
\newblock {\em Handbook of algebra. {V}ol. 5}, volume~5 of {\em Handbook of
  Algebra}.
\newblock Elsevier/North-Holland, Amsterdam, 2008.

\bibitem[HGK10]{HGK10}
Michiel Hazewinkel, Nadiya Gubareni, and V.~V. Kirichenko.
\newblock {\em Algebras, rings and modules}, volume 168 of {\em Mathematical
  Surveys and Monographs}.
\newblock American Mathematical Society, Providence, RI, 2010.
\newblock Lie algebras and Hopf algebras.

\bibitem[Kho]{Kho10}
Mikhail Khovanov.
\newblock Heisenberg algebra and a graphical calculus.
\newblock arXiv:1009.3295.

\bibitem[Kho01]{Kho01}
Mikhail Khovanov.
\newblock Nilcoxeter algebras categorify the {W}eyl algebra.
\newblock {\em Comm. Algebra}, 29(11):5033--5052, 2001.

\bibitem[KT97]{KT97}
Daniel Krob and Jean-Yves Thibon.
\newblock Noncommutative symmetric functions. {IV}. {Q}uantum linear groups and
  {H}ecke algebras at {$q=0$}.
\newblock {\em J. Algebraic Combin.}, 6(4):339--376, 1997.

\bibitem[Leh46]{Le46}
D.~H. Lehmer.
\newblock Two nonexistence theorems on partitions.
\newblock {\em Bull. Amer. Math. Soc.}, 52:538--544, 1946.

\bibitem[LLT96]{LLT96}
Alain Lascoux, Bernard Leclerc, and Jean-Yves Thibon.
\newblock Hecke algebras at roots of unity and crystal bases of quantum affine
  algebras.
\newblock {\em Comm. Math. Phys.}, 181(1):205--263, 1996.

\bibitem[LMvW]{LMvW13}
K.~Luoto, S.~Mykytiuk, and S.~van Willigenburg.
\newblock An introduction to quasisymmetric {S}chur functions. {A}vailable at
  \url{http://www.math.ubc.ca/~steph/papers/QuasiSchurBook.pdf}.

\bibitem[LS12]{LS12}
Anthony Licata and Alistair Savage.
\newblock A survey of {H}eisenberg categorification via graphical calculus.
\newblock {\em Bull. Inst. Math. Acad. Sin. (N.S.)}, 7(2):291--321, 2012.

\bibitem[LS13]{LS13}
Anthony Licata and Alistair Savage.
\newblock Hecke algebras, finite general linear groups, and {H}eisenberg
  categorification.
\newblock {\em Quantum Topol.}, 4(2):125--185, 2013.

\bibitem[Lu94]{Lu94}
Jiang-Hua Lu.
\newblock On the {D}rinfel\cprime d double and the {H}eisenberg double of a
  {H}opf algebra.
\newblock {\em Duke Math. J.}, 74(3):763--776, 1994.

\bibitem[Nor79]{Nor79}
P.~N. Norton.
\newblock {$0$}-{H}ecke algebras.
\newblock {\em J. Austral. Math. Soc. Ser. A}, 27(3):337--357, 1979.

\bibitem[STS94]{STS94}
M.~A. Semenov-Tian-Shansky.
\newblock Poisson {L}ie groups, quantum duality principle, and the quantum
  double.
\newblock In {\em Mathematical aspects of conformal and topological field
  theories and quantum groups ({S}outh {H}adley, {MA}, 1992)}, volume 175 of
  {\em Contemp. Math.}, pages 219--248. Amer. Math. Soc., Providence, RI, 1994.

\bibitem[SY11]{SY11}
Andrzej Skowro{\'n}ski and Kunio Yamagata.
\newblock {\em Frobenius algebras. {I}}.
\newblock EMS Textbooks in Mathematics. European Mathematical Society (EMS),
  Z\"urich, 2011.
\newblock Basic representation theory.

\bibitem[Zab]{Z03}
Mike Zabrocki.
\newblock Introduction to symmetric functions. {A}vailable at
  \url{http://garsia.math.yorku.ca/ghana03/chapters/mainfile2.pdf}.

\bibitem[Zel81]{Zel81}
Andrey~V. Zelevinsky.
\newblock {\em Representations of finite classical groups}, volume 869 of {\em
  Lecture Notes in Mathematics}.
\newblock Springer-Verlag, Berlin, 1981.

\end{thebibliography}

\end{document}